\documentclass[letterpaper]{amsart}

\usepackage[initials,nobysame]{amsrefs}
\usepackage{amssymb, mathtools, mathrsfs, enumitem, hyperref}
\usepackage{pgfplots}\pgfplotsset{compat=1.18}
\BibSpec{collection.article}{%
	+{}  {\PrintAuthors}				{author}
	+{,} { \textit}                  	{title}
	+{.} { }                         	{part}
	+{:} { \textit}                  	{subtitle}
	+{,} { \PrintContributions}      	{contribution}
	+{,} { \PrintConference}         	{conference}
	+{}  {\PrintBook}                	{book}
	+{,} { }                         	{booktitle}
	+{,} { }						 	{series}
	+{,} { }						 	{publisher}
	+{,} { }						 	{address}
	+{,} { \PrintDateB}              	{date}
	+{,} { pp.~}                     	{pages}
	+{,} { }                         	{status}
	+{,} { \PrintDOI}                	{doi}
	+{,} { available at \eprint}     	{eprint}
	+{}  { \parenthesize}            	{language}
	+{}  { \PrintTranslation}        	{translation}
	+{;} { \PrintReprint}            	{reprint}
	+{.} { }                         	{note}
	+{.} {}                          	{transition}
	+{}  {\SentenceSpace \PrintReviews} {review}
}

\theoremstyle{plain}
\newtheorem{theorem}{Theorem}
\newtheorem{lemma}[theorem]{Lemma}
\newtheorem{proposition}[theorem]{Proposition}

\newtheorem*{conjecture}{Conjecture}

\theoremstyle{definition}
\newtheorem{definition}[theorem]{Definition}

\theoremstyle{remark}

\numberwithin{equation}{section}
\numberwithin{theorem}{section}
\renewcommand{\bar}{\overline}

\newcommand{\C}{\mathbb C}
\newcommand{\D}{\mathbb D}

\newcommand{\N}{\mathbb N}

\DeclareMathOperator{\dist}{dist}
\DeclareMathOperator{\diam}{diam}

\DeclareMathOperator{\Mod}{Mod}

\title{Koebe uniformization, John domains, and transboundary modulus}

\author{Christina Karafyllia}
\address{Department of Mathematics, University of Western Macedonia, Kastoria, 52100, Greece}
\email{chkarafyllia@uowm.gr}   

\author{Dimitrios Ntalampekos}
\address{Department of Mathematics, Aristotle University of Thessaloniki, Thessaloniki, 54152, Greece.}
\thanks{D.~N.\ is supported by the European Union (ERC, GRComPaS, Grant Agreement no.\ 101214615). Views and opinions expressed are however those of the author(s) only and do not necessarily reflect those of the European Union or the European Research Council. Neither the European Union nor the granting authority can be held responsible for them.}
\email[Corresponding author]{dntalam@math.auth.gr}

\date{\today}
\keywords{circle domain, uniformization, Koebe conjecture, John domain, transboundary modulus}
\subjclass[2020]{Primary 30C20; Secondary 30C35}

\begin{document}

	\begin{abstract}
		We prove Koebe's conjecture for John domains by showing that every planar John domain is conformally equivalent to a circle domain. Our proof is based on a geometric property of John domains, which we prove to be sufficient for uniformization by a circle domain. As a consequence, we obtain an alternative proof of Koebe's conjecture for inner uniform and Gromov hyperbolic domains, recovering results established in earlier work of the authors. Finally, we derive a necessary and sufficient condition for a domain to satisfy Koebe's conjecture by using Schramm's transboundary modulus and well-behaved exhaustions of circle domains constructed by the second author and Rajala. 
	\end{abstract}

\maketitle

\section{Introduction}

The purpose of this work is to establish Koebe's conjecture for the class of John domains in the plane $\C$.

\begin{theorem}\label{theorem:intro}
Every John domain is conformally equivalent to a circle domain. 
\end{theorem}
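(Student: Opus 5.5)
The plan follows the strategy the abstract suggests: isolate a geometric property of John domains that is sufficient for uniformization by a circle domain, and prove that sufficiency using transboundary modulus together with exhaustions by circle domains.

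\emph{Step 1: the geometric property.} Let $\Omega\subset\hat{\C}$ be a John domain with John constant $c$. I would first show that every complementary component $K$ of $\Omega$ is uniformly "fat" relative to its surroundings. Concretely, if $B(x,r)$ is centered at a point $x\in \partial\Omega$ and $r<\diam \Omega/c'$, then the set of complementary components meeting $B(x,r)$ and of diameter at least $\delta r$ is controlled. Moreover, the small components can be avoided by curves in $\Omega$ of length comparable to $r$. This follows from the twisted cone condition: each point near $x$ is joined to a John center by a curve staying at distance comparable to arclength from $\partial\Omega$. Point components need separate treatment. The cleanest formulation is a \emph{transboundary} one: for every complementary component $K$, every $r$ small, and every pair of continua separated by an annulus $A(x;r,Mr)$, the transboundary modulus of curves in $\Omega$ joining them (where crossing a component $K'$ costs a single weight $\rho(K')$) is bounded above by a constant depending only on $c$ and $M$.

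\emph{Step 2: exhaustion and limiting argument.} Exhaust $\Omega$ by finitely connected John domains $\Omega_n$, with uniform John constant, obtained by filling in all but finitely many components. By Koebe's theorem for finitely connected domains there are conformal maps $f_n\colon \Omega_n\to D_n$ onto circle domains, normalized at three points. Here I would invoke the well-behaved exhaustions of Ntalampekos--Rajala to ensure that the $D_n$ converge to a domain $D$ and the $f_n$ converge locally uniformly to a conformal map $f\colon\Omega\to D$. The remaining task is to show that every complementary component of $D$ is a disk or a point. Equivalently, I need uniform equicontinuity of $f_n$ up to the boundary, in the sense that each complementary component of $\Omega$ corresponds to a limit of disks whose diameters behave well. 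A nondegenerate non-disk limit arises only if some component of $\Omega$ is mapped onto something that degenerates, or if a point component expands to a nondegenerate set.

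\emph{Step 3: comparing moduli.} Transboundary modulus is conformally invariant in the appropriate sense, and for circle domains it is comparable to classical extremal length via Schramm's estimates. If $f_n$ stretched a small region near $x$ into a large piece of $D_n$, then in $D_n$ there would be a family of curves with large transboundary modulus: two large continua at bounded relative distance in a circle domain force modulus bounded below. By invariance, the corresponding family in $\Omega_n$ would have large modulus. This contradicts Step 1, which gives a modulus upper bound uniform in $n$. The reverse direction uses the lower bound of the transboundary modulus on the circle-domain side.

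\emph{Main obstacle.} I expect Step 1 to be the hardest step: producing a modulus upper bound that is uniform over the infinitely many small complementary components, in particular point components and components accumulating on larger ones. The first approach I would try is to build explicit admissible weights $\rho$ from the John curves, assigning $\rho(K)\approx \diam K/r$ for components of diameter at least a fixed fraction of $r$ and a density $1/r$ on $\Omega\cap A$. I would then check that $\sum_K \rho(K)^2+\int\rho^2$ is bounded using the disjointness of the John cones.
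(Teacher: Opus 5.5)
\paragraph{The gap is in Step 3: as set up, it produces no contradiction.}
What Step 1 can deliver is a uniform \emph{upper bound} $C$ on the transboundary modulus of curves crossing a single annulus. The paper proves exactly this from packing plus fat shadows, using weights $\rho(p)=w_A(p)/\log 2$ on \emph{every} component meeting $A=\mathbb A(z_0;t/2,t)$ and density $1/(|z-z_0|\log 2)$ on $\Omega\cap A$.

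Your weights, placed only on components of diameter $\gtrsim r$, are not admissible. A John domain may contain a radial chain of tiny collinear slits separated by much tinier gaps, and crossing curves along that chain have negligible $\rho$-length.

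On the circle-domain side, cofatness of $D_n$ gives only a \emph{lower bound} by a fixed $C_0>0$. The comparison $\min\{1,\Mod\Gamma\}\le c(\tau)\Mod_{D_n}\Gamma$ is capped, so nothing becomes ``large''. A modulus lying between $C_0$ and $C$ contradicts nothing.

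The missing idea is to force the $\Omega$-side modulus to $0$. Around $z_0\in\partial q$, pick $N$ disjoint annuli such that no complementary component other than $q$ meets two of them. This is possible because only finitely many components have diameter $>\varepsilon$. Then assign weight $0$ to $q_j$ and average the $N$ admissible densities. This gives $\Mod_{\Omega_j}\le C/N$ for curves in $\hat\Omega_j\setminus\{q_j\}$ joining $S(z_0,r)$ to $S(z_0,R)$.

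Dropping the weight of $q_j$ is unavoidable, because $q$ meets every annulus centered at $z_0$. Indeed, any family of curves that all meet $q_j$ has modulus at most $1$ (put weight $1$ at $q_j$), so lower bounds for such families can never be contradicted.

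\paragraph{The failure mode you describe is not the relevant one.}
A nondegenerate component collapsing to a point, or a point component becoming a disk, would not stop $D$ from being a circle domain. The real danger is that $\hat f(q)$ is strictly larger than the Hausdorff limit $E$ of the disks $\hat f_n(q_n)$.

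The paper's generalized Carath\'eodory theorem handles exactly this, and it also fixes which family must be compared:
\begin{enumerate}
\item Take $w_0\in\partial\hat f(q)\setminus E$ and points $z_k=f^{-1}(w_k)\to q$ with $w_k\to w_0$.
\item Take a fixed annulus separating $w_0$ from $E$. Its separating curves give $\Mod_{D_n}\ge C_0$ for curves separating $f_n(z_k)$ from $\hat f_n(q_n)$ inside $f_n(U)$.
\item These curves avoid $\hat f_n(q_n)$. For a limit point $z_0\in\partial q$ of $(z_k)$, their preimages contain subcurves in $\hat\Omega_n\setminus\{q_n\}$ crossing from $S(z_0,r)$ to $S(z_0,R)$.
\item Conformal invariance then contradicts the $C/N$ estimate.
\end{enumerate}

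\paragraph{Step 2 is circular.}
The Ntalampekos--Rajala exhaustions presuppose that $\Omega$ already has a circle-domain uniformization, so they cannot be invoked here. Locally uniform convergence of $f_n$ to a conformal $f$ follows from a three-point normalization and normality alone. By itself, that convergence says nothing about whether $D$ is a circle domain.
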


\subsection{Background}\label{section:background}
A domain in the Riemann sphere $\widehat \C=\C\cup\{\infty\}$ whose boundary components are points or circles is called a \textit{circle domain}. In 1908, Koebe \cite{Koebe:Kreisnormierungsproblem} posed the following deep conjecture, known as the Kreisnormierungsproblem.

\begin{conjecture}Every domain in the Riemann sphere is conformally equivalent to a circle domain. 
\end{conjecture}

Koebe \cite{Koebe:FiniteUniformization} established the conjecture for finitely connected domains. More than half a century later, He and Schramm \cite{HeSchramm:Uniformization} proved, in a celebrated work, that if $\Omega$ is a countably connected domain in the Riemann sphere, then there exists a conformal map $f$ from $\Omega$ onto a circle domain. In addition, $f$ is unique up to postcomposition with M\"{o}bius transformations. This remains the most general result known to date concerning Koebe's conjecture, while the conjecture in full generality is still open.

A fruitful approach to the conjecture is to verify it for various classes of domains satisfying some good geometric conditions. The techniques developed for the treatment of these special cases provide valuable insights into the general problem. In the present work we focus on the class of John domains.

John domains were introduced by F.\ John \cite{John:domains} and subsequently studied by Martio and Sarvas \cite{MartioSarvas:uniform}. Roughly speaking, a planar domain is a John domain if any two points in the domain can be connected by a curve that stays quantitatively away from the boundary. In particular, John domains cannot have outward cusps, although they may have inward cusps. By their original definition, John domains are bounded. However, Nakki and V\"ais\"al\"a \cite{NakkiVaisala:john} extended the definition to include unbounded domains. In this paper, we adopt the definition given in \cite{NakkiVaisala:john}.

\begin{definition}\label{definition:john}
Let $\Omega\subset \C$ be a domain and let $L\geq 1$.  We say that $\Omega$ is an \textit{$L$-John domain} if for every pair of distinct points $z,w\in \Omega$ there exists a curve $\gamma\colon [0,1]\to \Omega$ such that  $\gamma(0)=z$, $\gamma(1)=w$, and
\begin{align}\label{def:john}
\min\{\ell (\gamma|_{[0,t]}), \ell(\gamma|_{[t,1]} )\} \leq L \dist(\gamma(t),\partial \Omega)\,\,\, \text{for all $t\in [0,1]$.}
\end{align}
We say that $\Omega$ is a John domain if it is an $L$-John domain for some $L\ge 1$. 
\end{definition}

If, in addition to \eqref{def:john}, we require the curve $\gamma$ to satisfy
\begin{align}\label{def:uniform}
\ell (\gamma)\le L|z-w|,
\end{align}
then $\Omega$ is a \textit{uniform domain}. These domains were introduced independently by Martio--Sarvas \cite{MartioSarvas:uniform} and Jones \cite{Jones:uniform} and appear in several problems of geometric function theory, since they enjoy several of the favorable properties of the unit disk. If, instead of requiring \eqref{def:uniform}, we impose the weaker condition
\[\ell (\gamma)\le L \inf_{\beta} \ell (\beta),\]
where the infimum is taken over all paths $\beta$ in $\Omega$ connecting $z$ to $w$, then $\Omega$ is called an \textit{inner uniform domain}. Thus, among the classes of uniform, inner uniform, and John domains, the latter is the broadest.

Herron and Koskela \cite{HerronKoskela:QEDcircledomains} verified Koebe's conjecture for the class of {uniform domains}. More recently, the conjecture was established in \cite{KarafylliaNtalampekos:gromov_hyperbolic} by the current authors for the class of inner uniform domains. An important consequence of that work is that a domain is \textit{Gromov hyperbolic} if and only if it is the conformal image of a uniform domain, thereby verifying a conjecture of Bonk--Heinonen--Koskela \cite{BonkHeinonenKoskela:gromov_hyperbolic}. Thus, the present work provides an alternative and completely different proof of the main result of \cite{KarafylliaNtalampekos:gromov_hyperbolic}. Although the definitions of inner uniform and John domains have similarities, the geometry of John domains can be considerably wilder. Thus, the techniques developed in \cite{KarafylliaNtalampekos:gromov_hyperbolic} do not apply, as we discuss below. 

Instead, our approach relies on the notion of transboundary modulus, introduced by Schramm in his remarkable work \cite{Schramm:transboundary}. This notion has become a standard tool in modern complex analysis and it is one of the main tools used in the present work. Schramm used transboundary modulus to verify Koebe's conjecture for \textit{cofat domains}. A measurable set $P\subset \widehat \C$ is $\tau$-fat for some $\tau>0$ if, for every ball $B$ of spherical radius $r$ that does not contain $P$, the spherical area of $B\cap P$ is at least $\tau r^2$. A domain $\Omega\subset \widehat{\C}$ is cofat if there exists $\tau>0$ such that each component of $\widehat \C\setminus \Omega$ is $\tau$-fat. More recently, Esmayli and Rajala \cite{EsmayliRajala:quasitripod} generalized Schramm's result and verified the conjecture for the class of \textit{cospread domains}, whose complementary components are allowed to have area zero. Note that the complement of a line segment is a John domain, but it is neither a cofat nor a cospread domain.

\subsection{Uniqueness}
If there exists a conformal map $f$ from a domain $\Omega$ onto a circle domain one can ask whether it is unique. Namely, if $g$ is another conformal map from $\Omega$ onto a circle domain, is $f\circ g^{-1}$ the restriction of a M\"obius transformation?

The answer is affirmative for finitely connected domains, by a theorem of Koebe \cite{Koebe:FiniteUniformization}, and for countably connected domains, due to He and Schramm \cite{HeSchramm:Uniformization}. The uniqueness problem, however, is distinct from the existence problem and requires different techniques. These include the use of the Schottky groups generated by reflections in circles, as well as removability results from the theory of quasiconformal maps. For instance, although the existence in the case of uniform domains was established by Herron and Koskela in \cite{HerronKoskela:QEDcircledomains}, uniqueness was proved only three decades later, by the author and Younsi \cite{NtalampekosYounsi:rigidity}. More precisely, they showed that if a circle domain is a uniform domain or a John domain (or, more generally, satisfies certain geometric conditions), then any conformal map from it onto another circle domain is the restriction of a M\"obius transformation.

Therefore, assuming that there exists a conformal map $f$ from a domain $\Omega$ onto a circle domain $D$, in order to prove the uniqueness of $f$, it suffices to show that $D$ has some nice geometric properties, as the ones appearing in \cite{NtalampekosYounsi:rigidity}. We note, however, that in the case of a John domain $\Omega$ this is not necessarily true. For example, consider a domain $\Omega$ whose boundary is contained in $[0,1]$ and there exist pairs of components $E_n,F_n\subset \partial \Omega$, $n\in \N$, whose relative distances 
$$\Delta(E_n,F_n)=\frac{\dist(E_n,F_n)}{\min\{\diam E_n,\diam F_n\}}$$
converge to $0$ as $n\to\infty$. Then $\Omega$ can be conformally mapped to a circle domain $D$ due to symmetry \cite{Koebe:symmetric}. Using conformal modulus it can be proved that the circles $E_n',F_n'$ of $\partial D$ corresponding to $E_n,F_n$, respectively, must also have relative distances converging to $0$. Thus, $E_n',F_n'$ tend to be tangent to each other. This shows that the circle domain $D$ tends to have an outward cusp between $E_n'$ and $F_n'$ so it is not a uniform, an inner uniform or a John domain.
 
We do not discuss the uniqueness problem further and we leave it as an open problem for John domains. More background on the uniqueness and recent results can be found in \cites{HeSchramm:Rigidity, Younsi:RemovabilityRigidityKoebe, Ntalampekos:rigidity_cned, Rajala:rigidity}. See also the survey paper \cite{Ntalampekos:uniformization_survey}.

\subsection{Uniformization by exhaustion}\label{section:exhaustion_intro}

The proof of our main theorem for the uniformization of John domains utilizes \textit{exterior approximations}. That is, we consider finitely connected domains $\Omega_j\supset \Omega$, $j\in \N$, we uniformize $\Omega_j$ conformally by a circle domain $D_j$ and then pass to the limit; see Section \ref{section:strategy} for further details. The same approach is used in several works related to the conjecture \cites{HeSchramm:Uniformization, Schramm:transboundary, EsmayliRajala:quasitripod, KarafylliaNtalampekos:gromov_hyperbolic}. There is a competing approach using exhaustions of a domain, first studied by Rajala \cite{Rajala:koebe}, that we describe below. In the current work we reconcile the two approaches, since almost all of our technical results in Section \ref{section:uniformization} can be equally applied to exhaustions and to exterior approximations.

Let $\Omega\subset \widehat \C$ be a domain. We denote by $\mathcal C(\Omega)$ the collection of components of $\widehat \C\setminus \Omega$. An \textit{exhaustion} of $\Omega$ is a sequence of domains $\{\Omega_j\}_{j\in \N}$ such that, for each $j\in \N$, $\partial \Omega_j$ is the union of finitely many disjoint Jordan curves in $\Omega$, $\Omega_j\subset \Omega_{j+1}\subset \Omega$, and $\bigcup_{j\in \N}\Omega_j=\Omega$. Observe that for each $q\in \mathcal C(\Omega)$ and for each $j\in \N$ there exists a unique component $q_j(q)\in \mathcal C(\Omega_j)$ such that $q\subset q_j(q)$.

\begin{figure}
	\centering
	\begin{tikzpicture}
		\begin{scope}
		\clip (0,0) circle (3.0cm);		
		\draw[line width=0.74cm, line cap=round](-2,0)--(1,-2);
		\draw[line width=0.7cm, line cap=round, color=black!20](-2,0)--(1,-2);
		\draw[line width=0.1cm, line cap=round](-1.25,-0.5)--(0.25,-1.5);
		\node[left] at (-1.25,-0.5) {$q$};
		\node[below] at (-2.2,-0.5) {$q_j(q)$};
		
		\draw[rounded corners, fill=black!20] (-1,1)--(-.7,.7)--(-0.4,1)--(-.7,1.2)--cycle;
		\fill(-0.7,0.9) circle (1pt);		
		\fill(-0.6,1) circle (1pt);
				
		\draw[rounded corners=3pt, fill=black!20, shift={(1,-1)}] (-0.9,1.1)--(-.8,.7)--(-0.3,.8)--(-.7,1.2)--cycle;
		\draw[line width=1pt,shift={(1,-1)}, line cap=round](-0.7,0.8)--(-0.6,1);  		
		\draw[line width=1pt,shift={(1,-1)}, line cap=round](-0.8,0.9)--(-0.7,1.1); 	
		
		\draw[rounded corners=3pt, fill=black!20, shift={(1,1)}] (-1.1,1)--(-.9,.8)--(-0.3,.7)--(-.6,1.1)--cycle;
		\fill[shift={(1,1)}](-0.8,0.9) circle (1pt);		
		\fill[shift={(1,1)}](-0.6,.9) circle (1pt);
		
		\draw[rounded corners=3pt, fill=black!20, shift={(-1,0.5)}] (-0.9,1.1)--(-.8,.7)--(-0.3,.8)--(-.7,1.2)--cycle;
		\fill[shift={(-1,0.5)}](-0.7,0.9) circle (1pt);		
		\fill[shift={(-1,0.5)}](-0.6,.9) circle (1pt);
		\fill[shift={(-1,0.5)}](-0.65,1) circle (1pt);
		
		\draw[rounded corners=3pt, fill=black!20, shift={(2,0.5)}] (-0.9,1.1)--(-.8,.7)--(-0.3,.8)--(-.7,1.2)--cycle;
		\fill[shift={(2,0.5)}](-0.7,0.9) circle (2.5pt);
		\fill[shift={(2,0.5)}](-0.53,0.9) circle (1pt);
		\fill[shift={(2,0.5)}](-0.7,1.07) circle (1pt);
		
		\draw[rounded corners=3pt, fill=black!20, shift={(2.5,-0.8)}] (-1,1)--(-.9,.8)--(-0.5,.7)--(-.8,1.3)--cycle;
		
		\fill (0.2,0.4) circle (1.5pt) node[above] {$z$};

		\draw[blue,thick,rounded corners] (-4.8,2.2)--(-0.6,-0.4)--(4.2,-3.8);
		\draw[blue,thick,rounded corners] (-4,4,2.6)--(-0.55,-0.35)--(4.6,-3.4);
		\draw[blue,thick,rounded corners] (-4,3)--(-0.5,-0.3)--(5,-3);
		\draw[blue,thick,rounded corners] (-3.6,3.4)--(-0.44,-0.24)--(5.4,-2.6);
		\draw[blue,thick,rounded corners] (-3.2,3.8)--(-0.37,-0.17)--(5.8,-2.2);
		\draw[blue,thick,rounded corners] (-2.8,4.2)--(-0.3,-0.1)--(6.2,-1.8);
		\draw[blue,thick,rounded corners] (-2.4,4.6)--(-0.24,-0.04)--(6.6,-1.4);
		\draw[blue,thick,rounded corners] (-2,5)--(-0.17,0.03)--(7,-1);
		\draw[blue,thick,rounded corners] (-1.6,5.4)--(-0.1,0.1)--(7.4,-0.6);
		\draw[blue,thick,rounded corners] (-1.2,5.8)--(-0.04,0.17)--(7.8,-0.2);			\end{scope}
		
		\draw[dotted] (0,0) circle (3cm);
		\node at (2.3,2.5) {$\partial U$};
	\end{tikzpicture}
	\caption{An exhaustion of a domain and some curves lying in the family $\mathcal G^*(z,q_j(q);U)$ that appears in Theorem \ref{theorem:exhaustion}.}\label{fig:exhaustion}
\end{figure}

According to a recent result of the second author and Rajala \cite{NtalampekosRajala:exhaustion}*{Theorem 1.1}, if a domain $\Omega\subset \widehat \C$ satisfies Koebe's conjecture then there exists an exhaustion $\{\Omega_j\}_{j\in \N}$ such that if $f_j\colon \Omega_j\to D_j$, $j\in \N$, is a sequence of conformal maps given by Koebe's uniformization theorem, normalized appropriately, then $\{f_j\}_{j\in \N}$ converges locally uniformly in $\Omega$ to a conformal map $f$ from $\Omega$ onto a circle domain. We refine this result by giving the precise property of the exhaustion $\{\Omega_j\}_{j\in \N}$ that is needed for Koebe's conjecture to be true.

\begin{theorem}\label{theorem:exhaustion}
A domain $\Omega\subset \widehat \C$ satisfies Koebe's conjecture if and only if there exist an exhaustion $\{\Omega_j\}_{j\in \N}$ of $\Omega$ and a Jordan region $U\subset \widehat \C$ that contains $\widehat \C\setminus \Omega$ such that for every $ q\in \mathcal C(\Omega)$ we have
$$\lim_{\substack{z\to  q\\z\in \Omega}} \limsup_{j\to\infty} \Mod_{\Omega_j} \mathcal G^*(z,q_j(q); U)=0,$$
where $\mathcal G^*(z,q_j(q);U)$ is the family of curves in $\pi_{\Omega_j}(\bar U)$ with endpoints on $\partial U$ that separate $z$ and $q_j(q)$ in $\pi_{\Omega_j}(U)$. 
\end{theorem}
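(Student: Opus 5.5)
We prove the two directions separately; the substantive one is sufficiency.

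\emph{Sufficiency.} Suppose the exhaustion $\{\Omega_j\}$ and the Jordan region $U$ are given. Let $f_j\colon \Omega_j\to D_j$ be the Koebe uniformizing maps onto circle domains, normalized at three fixed points of $\Omega\setminus \bar U$, or with $f_j(z)=z+o(1)$ at $\infty$ after placing $\infty\in\Omega\setminus\bar U$. By normality and the normalization, after passing to a subsequence, $f_j\to f$ locally uniformly on $\Omega$, where $f$ is conformal onto a domain $D$. The task is to show that every $q\in\mathcal C(\Omega)$ corresponds to a complementary component of $D$ that is a point or a closed disk. Let $p_j$ be the complementary disk (or point) of $D_j$ corresponding to $q_j(q)$. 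After a further subsequence, $p_j$ converges in the Hausdorff sense to a closed disk or a point $p$.

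We claim that $p$ equals the complementary component $f(q)$ of $D$ determined by the ends of $\Omega$ at $q$. The inclusion ``$\supset$'' should be the easy half; the substantive half is that $p$ is not strictly larger than, and is not glued to, other limit components. To prove this, use conformal invariance of transboundary modulus: $\Mod_{\Omega_j}\mathcal G^*(z,q_j(q);U)=\Mod_{D_j}f_j(\mathcal G^*)$. In a circle domain, a transboundary family of curves separating $f_j(z)$ from the disk $p_j$ has modulus bounded below by a quantity that is small only when $\dist(f_j(z),p_j)$ is small relative to the scale of $f_j(\partial U)$. This holds because, for circle domains, one can use the standard lower bound obtained by testing with concentric circles around $p_j$: each such circle gives an admissible curve of the family, since disks are round, so fatness is automatic. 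The hypothesis therefore yields the following. As $z\to q$, $\limsup_j \dist(f_j(z),p_j)\to0$ uniformly in the appropriate sense, and, more strongly, no point of $D$ near $p$ is far in $D$ from $p$. Consequently the cluster set of $f$ at $q$ equals $p$ and $p\cap D=\emptyset$. Hence the complementary component of $D$ corresponding to $q$ is exactly the round disk or point $p$.

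One more point must be checked: distinct components of $\mathcal C(\Omega)$ do not merge in the limit, and no complementary component of $D$ arises except from some $q$. Both follow from the same modulus smallness, together with the fact that complementary components of the limit are exhausted by those of $\Omega$ via Carathéodory kernel convergence. The main obstacle is this step, namely converting the transboundary modulus hypothesis into the statement that the limit component is exactly round. Here I would follow Schramm's argument from \cite{Schramm:transboundary}: suppose the cluster set of $f$ at $q$ is a nondegenerate continuum not equal to the limit disk. Then one can build a positive-modulus family of separating curves in $D_j$, taking circles around a point of the discrepancy, which contradicts the hypothesis.

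\emph{Necessity.} If $\Omega$ satisfies Koebe's conjecture, let $f\colon\Omega\to D$ be conformal onto a circle domain. Take the exhaustion of \cite{NtalampekosRajala:exhaustion}*{Theorem 1.1}, or more simply the preimage under $f$ of an exhaustion of $D$ by complements of finitely many disks slightly enlarging the big complementary disks and small neighborhoods grouping the rest, with $U=f^{-1}$ of a suitable Jordan region. By conformal invariance it suffices to verify the modulus condition in $D$ with the exhaustion $D_j$. For a point or disk component $p$ of $D$, a curve separating $z$ from $p_j$ must pass near $p$. The explicit admissible metric $\rho=1/(|w-c|\log(1/\delta))$ on an annulus around $p$, with mass assigned to the complementary components meeting it, has transboundary mass tending to $0$ as $z\to p$, uniformly in $j$. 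Here the exhaustion must be chosen so that the contributions of the small grouped components are controlled by their diameters, which is the point of \cite{NtalampekosRajala:exhaustion}.
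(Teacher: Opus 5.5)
Your sufficiency direction follows the paper's route (Schramm's argument, as in Theorem \ref{theorem:general_caratheodory}). The genuine gap is in the necessity direction, at its only quantitative step. You assert that the metric $\rho=\chi_A/(|w-c|\log(R/r))$, with weights on the components meeting $A$, has mass tending to $0$ uniformly in $j$. With the natural weights $w_A(p')/\log(R/r)$ the mass is $2\pi/\log(R/r)+\log(R/r)^{-2}\sum_{p'}w_A(p')^2$, and the second term need not become small, even when all components are round.

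For example, let $D$ be the complement of $\{0\}$ and of disjoint disks $B_i$ with $\dist(0,B_i)=2^{-k_i}$, $\diam B_i=2^{-k_i/2}$, and $k_{i+1}=3k_i$. Then $w_A(B_i)\approx\frac{k_i}{2}\log 2$ once $r\le 2^{-k_i}$, and a short computation shows that $\log(R/r)^{-2}\sum_i w_A(B_i)^2$ stays above roughly $1/10$ for all small $r$. Changing the weights does not help. A curve through $B_i$ alone, swinging around the other disks at cost $O(1/\log(R/r))$, forces $\rho(B_i)\ge (w_A(B_i)-O(1))/\log(R/r)$. For large $j$ the components of $D_j$ containing the $B_i$ only have larger widths.

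So controlling the shapes of the exhaustion components is not enough. The quasiround exhaustion of \cite{NtalampekosRajala:exhaustion}*{Theorem 2.1} ($B_p\subset p\subset K_0B_p$), together with packing, only bounds $\sum_{p'}w_{A_r}(p')^2$ uniformly for a single dyadic annulus $A_r=\mathbb A(z_0;r/2,r)$ (Lemma \ref{lemma:characteristics}). The missing idea is the multiscale averaging of Lemma \ref{lemma:potential_sufficient}:
select $N$ nested dyadic annuli at $z_0$ such that no complementary component other than $q$ meets two of them. This is possible because only finitely many components of $D$ have diameter above any $\varepsilon>0$, and it passes to $D_j$ for large $j$ via Lemma \ref{lemma:domains_converge}. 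Then truncate the corresponding admissible metrics to their annuli, set them to $0$ at $q_j$, and average; this gives mass at most $C/N$.

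Two further points in this direction. For a nondegenerate disk $q$, the annuli must be centered at the point $z_0\in\partial q$ where $z$ accumulates, with curves avoiding $q_j$ as in Lemma \ref{lemma:potential}. An annulus ``around'' the whole disk keeps $\log(R/r)$ bounded and gives no decay. Also, your ``simpler'' exhaustion, which groups small components into arbitrary neighborhoods, provides no quasiroundness.

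In the sufficiency direction, two things need fixing. First, your labels are reversed. The inclusion $p\subset\hat f(q)$, i.e.\ $p\cap D=\emptyset$ and $p\cap\hat f(p')=\emptyset$ for $p'\neq q$, is the easy, purely topological half. It follows from Jordan curves $S\subset\Omega$ separating $z$ (or $p'$) from $q$, by passing to the limit in $f_j(S)$. The hypothesis is needed only for $\hat f(q)\subset p$. ``Merging'' is a non-issue, since $\hat f$ maps $\mathcal C(\Omega)$ bijectively onto $\mathcal C(D)$.

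Second, small circles around a point $w_0\in\partial\hat f(q)\setminus p$ contain no curves of $\hat f_j(\mathcal G^*(z_n,q_j;U))$, because members of that family have endpoints on $f_j(\partial U)$. You need an annulus bounded by two Jordan curves in $\widehat\C\setminus p$ that separate $w_0$ from $p$ and intersect the component of $\widehat\C\setminus f(\partial U)$ not containing $p$. Then every curve separating its boundary components has a subarc in the family, and Proposition \ref{proposition:comparison}, applied to the cofat circle domains $D_j$, gives the uniform lower bound.
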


See Figure \ref{fig:exhaustion} for an illustration of the curve family $\mathcal G^*(z,q_j(q);U)$. Here, $\Mod_{\Omega_j}$ denotes transboundary modulus with respect to the domain $\Omega_j$ and $\pi_{\Omega_j}$ is the projection from $\Omega_j$ onto the topological sphere $\hat \Omega_j$ arising by identifying all points of each complementary component of $\Omega_j$. See Section \ref{section:modulus} for details. The proof of the theorem is given in Section \ref{section:exhaustion}.

\subsection{Proof strategy}\label{section:strategy}

We first discuss the proof of the uniformization of inner uniform domains from \cite{KarafylliaNtalampekos:gromov_hyperbolic}. The proof consists of three parts. First, given an inner uniform domain $\Omega$ it is shown that $\Omega$ can be approximated by finitely connected inner uniform domains $\Omega_j$, $j\in \N$, all with the same constant. This is the most technical step and requires the establishment of several geometric properties of inner uniform domains. Second, it is shown that if an inner uniform domain $\Omega_j$ is conformally mapped onto a circle domain $D_j$, then $D_j$ is a uniform domain, quantitatively. Third, the proof invokes a crucial property of uniform circle domains: if $\{D_j\}_{j\in \N}$, is a sequence of circle domains that are uniform domains with the same constant, then any Carath\'eodory limit of $\{D_j\}_{j\in \N}$ is also a uniform circle domain.

However, this strategy fails for John domains. In fact, it is plausible that the analogue of the first step, which is the most technical part in \cite{KarafylliaNtalampekos:gromov_hyperbolic}, can be established more easily for John domains, since their definition is less restrictive than the one of inner uniform domains. However, the second step fails as illustrated by the example of the previous section. Also, the analogue of the third step fails quite dramatically: the Carath\'eodory limit of a sequence of John circle domains need not be a circle domain! For example, for $j\in \N$ let $\Omega_j$ be the domain whose boundary is the finite set $\{k/j: k=1,\dots,j\}$. The domains $\Omega_j$, $j\in \N$, are John domains with a uniform constant.  Nevertheless, $\{\Omega_j\}_{j\in \N}$ converges in the Carath\'eodory sense to $\widehat \C\setminus [0,1]$. Therefore, the proof strategy of \cite{KarafylliaNtalampekos:gromov_hyperbolic} has to be abandoned.

A priori it is not clear whether the transboundary modulus method of Schramm \cite{Schramm:transboundary} can be applied to John domains, since their complementary components can have area zero. However, the recent work of Esmayli--Rajala \cite{EsmayliRajala:quasitripod} manages to extend Schramm's method to cospread domains, whose complementary components contain quasitripods in all locations and scales. A particularly important geometric property of these domains is the \textit{packing condition}, stating that the number of boundary components intersecting a ball of radius $r$ and having diameter at least $r$ is uniformly bounded. It is intuitively clear that this condition holds for John domains, since they are not allowed to have narrow passages. Thus, a promising idea is to adapt Schramm's transboundary modulus method to John domains.

In the first part of the paper, in Section \ref{section:john}, we establish geometric properties of John domains that are relevant for applying Schramm's method. Specifically, in Theorem \ref{theorem:bounded_turning} we prove that the complementary components of an $L$-John domain $\Omega$ have bounded turning, quantitatively. We use this to show in Proposition \ref{prop:packing} that the packing condition is satisfied: the number of complementary components of $\Omega$ that intersect a ball $\D(z_0,r)$ and have diameter at least $r/a$ is bounded above by a constant $N(L,a)$. 

Next, we prove in Proposition \ref{prop:shadows} that although the complementary components of $\Omega$ need not be fat, they have a \textit{fat shadow} in $\Omega$. Namely, for each bounded complementary component $p$ of $\Omega$ we can find a ball $B\subset \Omega$ with roughly the same diameter as $p$ and such that $p$ is contained in a definite enlargement of $B$. So, in some sense, $B$ is the shadow of $p$. Moreover, using the packing condition we prove that these balls have bounded overlap.

In Section \ref{section:uniformization}, which is independent from Section \ref{section:john}, we verify that Koebe's conjecture is true for domains satisfying the packing condition and the fat shadow condition. The geometry of such domains resembles the geometry of cofat domains, so Schramm's method can be applied with suitable adaptations. Specifically, we approximate $\Omega$ from outside by domains $\Omega_j$ such that $\Omega_j\supset \Omega$ and the complementary components of $\Omega_j$ are exactly the complementary components of $\Omega$ with diameter at least $1/j$. The packing condition implies that the domain $\Omega_j$ is finitely connected for each $j\in \N$. Thus, $\Omega_j$ can be mapped onto a circle domain $D_j$ with a conformal map $f_j$ by the Koebe uniformization theorem. Using suitable normalizations, and after passing to a subsequence, we may assume that $f_j$ converges to a conformal map $f$ on $\Omega$. So, the question is whether the domain $f(\Omega)$ is a circle domain. 

Schramm establishes this under the assumption that there exists $\tau>0$ such that $\Omega_j$ is $\tau$-cofat for each $j\in \N$. The main technical result in this direction is what he calls the \textit{extended Carath\'eodory kernel convergence theorem} \cite{Schramm:transboundary}*{Theorem 3.1}. We generalize this result in Theorem \ref{theorem:general_caratheodory}, disposing of the cofatness assumption and using a close-to-optimal general assumption. Then we show that our assumption holds if the transboundary modulus in $\Omega_j$ of curves joining the boundary components of an annulus $\{z\in \C:r<|z-z_0|<R\}$ tends to $0$ as $j\to\infty$ and $r\to 0$; see Lemma \ref{lemma:potential}. Finally, we prove that this condition holds under the packing condition and the fat shadow condition. We also prove that, under the limiting map $f$ from a John domain $\Omega$ onto a circle domain $D$, the point components of $\widehat \C\setminus \Omega$  correspond bijectively to the point components of $\widehat \C\setminus D$; see Theorem \ref{theorem:koebe_nondegenerate}. Theorem \ref{theorem:intro} is restated in an expanded form as Theorem \ref{theorem:john:restate}.

\section{John domains}\label{section:john}

In this section we use the Euclidean metric in $\C$ exclusively. For $z_0\in \C$ and $0<r<R$ we define $\mathbb A(z_0;r,R)=\{z\in \C:r<|z-z_0|<R\}$, $\D(z_0,r)=\{z\in \C : |z-z_0|<r\}$, and  $\mathbb S(z_0,r)=\partial \D(z_0,r)$. For $k>0$ and a ball $B=\D(z_0,r)$ we set $kB=\D(z_0,kr)$. A Jordan curve (resp.\ Jordan arc) in a metric space $X$ is a continuous and injective embedding $\gamma$ of the unit circle (resp.\ closed  interval) into $X$. For a curve $\gamma\colon [a,b]\to X$ we denote by $|\gamma|$ its trace, i.e., the image $\gamma([a,b])$. We often make no distinction between a curve and its trace. A \textit{continuum} in $X$ is a compact and connected set.  If $G\subset \widehat \C$ is a domain and $\gamma\colon [a,b]\to \bar G$ is continuous and injective map with $\gamma(a),\gamma(b)\in \partial G$ and $\gamma((a,b))\subset G$, then  $C=\gamma((a,b))$ is called a \textit{crosscut} of $G$.

\begin{lemma}\label{lemma:crosscut_connected}
If $G\subset \widehat \C$ is a simply connected domain and $C$ is a crosscut of $G$, the following statements are true.
\begin{enumerate}[label=\normalfont(\arabic*)]
\item\label{lemma:crosscut_connected:1} The set $G\setminus C$ has exactly two components $G_1,G_2$ such that
$$G\cap \partial G_1=G\cap \partial G_2=C.$$
\item\label{lemma:crosscut_connected:2} The set $\partial G_i\setminus C$ is connected for $i=1,2$.
\end{enumerate}

\end{lemma}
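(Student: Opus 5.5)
The plan is to reduce to the unit disk via the Riemann map, being careful that $G$ need not have locally connected boundary. First, the degenerate cases. If $\widehat\C\setminus G$ is empty, then $\partial G=\emptyset$ and there is no crosscut. If $\widehat\C\setminus G$ is a single point, then $\gamma(a)=\gamma(b)$ and $\gamma$ fails to be injective, so again there is no crosscut. Hence we may assume $G$ is a hyperbolic simply connected domain. Let $\phi\colon \D\to G$ be a conformal homeomorphism.

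We invoke the standard fact, e.g.\ from Pommerenke's book, that for a crosscut $C$ of $G$ the preimage $\phi^{-1}(C)$ is a crosscut of $\D$. Its two end-limits exist and are distinct points $\zeta_1,\zeta_2\in\partial\D$. This follows from the Lindelöf/Koebe arc lemmas: a curve ending at a point of $\partial G$ has preimage ending at a single point of $\partial\D$. The endpoints are distinct because otherwise $C$ together with the point $\gamma(a)=\gamma(b)$ would have to enclose a nonempty part of $\partial G$ in a simply connected way, which is impossible. I would cite this rather than reprove it.

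A crosscut $\tilde C$ of $\D$ with distinct endpoints is such that $\tilde C\cup\{\zeta_1,\zeta_2\}$ is a Jordan arc. Together with either arc of $\partial\D$ between $\zeta_1$ and $\zeta_2$, it forms a Jordan curve. The Jordan curve theorem then shows that $\D\setminus\tilde C$ has exactly two components $V_1,V_2$, each bounded by $\tilde C$ together with one of the two boundary arcs $A_1,A_2$ of $\partial \D$. In particular $\D\cap\partial V_i=\tilde C$. Setting $G_i=\phi(V_i)$ and using that $\phi$ is a homeomorphism of $\D$ onto $G$, we obtain part (1). Here $G\cap\partial G_i=\phi(\D\cap\partial V_i)=C$, since the relative boundary in $G$ is preserved by $\phi$.

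For part (2), we have $\partial G_i\setminus C=\partial G_i\cap\partial G$. I would describe this set as the intersection
$$\partial G_i\setminus C=\bigcap_{\epsilon>0}\bar{\phi(N_\epsilon\cap V_i)},$$
where $N_\epsilon$ is the $\epsilon$-neighborhood in $\D$ of the closed arc $\bar A_i$; this is the cluster set of $\phi$ along $\bar A_i$. Each set $N_\epsilon\cap V_i$ is connected once $\epsilon$ is small, since it is a thin collar along the arc $\bar A_i$ inside the Jordan region $V_i$. Therefore each $\bar{\phi(N_\epsilon\cap V_i)}$ is a continuum. The sets decrease as $\epsilon$ decreases, so the intersection is a continuum, hence connected.

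The main obstacle is checking the displayed equality. For the inclusion "$\supset$": points of the intersection lie in $\bar G_i$ and are limits of $\phi(w_n)$ with $w_n\to \bar A_i\subset\partial\D$, so they lie in $\partial G$; they are not in $C\subset G$. For the inclusion "$\subset$": if $x\in\partial G_i\setminus C$, then $x$ is a limit of $\phi(w_n)$ with $w_n\in V_i$. Any limit point of $(w_n)$ inside $\D$ would force $x\in G\cap\partial G_i=C$, so the $w_n$ accumulate on $\partial V_i\cap\partial\D=\bar A_i$. The connectedness of the collars $N_\epsilon\cap V_i$ near the endpoints $\zeta_j$ needs a short argument. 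Near $\zeta_j$, the arc $\tilde C$ could oscillate, so I would instead take $N_\epsilon=\{w\in V_i:\dist(w,\bar A_i)<\epsilon\}$. Connectedness of this set follows by joining each of its points to a point near $\bar A_i$ within a small radial segment, and then using that a thin strip along the arc $A_i$ stays inside $V_i$ away from $\zeta_j$. Alternatively, one can avoid the issue by approximating $\bar A_i$ by compact subarcs $K\subset A_i$. This gives
$$\partial G_i\setminus C=\bar{\bigcup_K \mathrm{Cl}_K}$$
as the closure of an increasing union of continua, all of which meet a common continuum, so the set is connected.
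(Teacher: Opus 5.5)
Your part (1) is the paper's argument: Riemann map, Pommerenke's result that $\phi^{-1}(C)$ is a crosscut of $\D$ with distinct endpoints, then the Jordan curve theorem. The gap is in part (2). The reduction $\partial G_i\setminus C=\bigcap_\epsilon\bar{\phi(N_\epsilon\cap V_i)}$ and both of your inclusions are fine. What fails is the claim that $N_\epsilon\cap V_i$ is connected for small $\epsilon$.

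Here is a counterexample. Take $G=\D$ and $\phi=\mathrm{id}$. Choose a crosscut $\tilde C$ which, near $\zeta_1$ and for every $k\ge k_0$, creates a pocket of $V_i$ within distance $2^{-k}$ of $\zeta_1$. Arrange that every path in $V_i$ from this pocket to the collar along $A_i$ passes through a point at distance at least $4\cdot 2^{-k}$ from $\bar A_i$. Such thin channels can be nested disjointly with diameters tending to $0$, so $\tilde C$ still lands at $\zeta_1$. For $2^{-k}<\epsilon\le 4\cdot 2^{-k}$ the part of the $k$-th pocket lying in $N_\epsilon$ is then a separate component of $N_\epsilon\cap V_i$. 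These intervals cover all small $\epsilon$, so the collars are disconnected for every small $\epsilon$.

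Your two proposed repairs do not work:
\begin{enumerate}
\item The set $\{w\in V_i:\dist(w,\bar A_i)<\epsilon\}$ is literally the same set as $N_\epsilon\cap V_i$, and the radial segment from a point of $V_i$ to $\bar A_i$ can cross $\tilde C$.
\item The fallback with compact subarcs $K\subset A_i$ does make $\bar{\bigcup_K\mathrm{Cl}_K}$ connected. However, its claimed equality with $\partial G_i\setminus C$ is exactly the endpoint problem again. Cluster values along sequences in $V_i$ tending to $\zeta_1$ or $\zeta_2$, for instance the endpoints of $C$ themselves, lie in no $\mathrm{Cl}_K$. Showing they lie in the closure needs a genuine argument of Lindel\"of or boundary-cluster-set type, which you do not give.
\end{enumerate}

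What you need is a decreasing family of \emph{connected} open sets $W\subset V_i$ such that a sequence in $V_i$ tends to $\bar A_i$ if and only if it is eventually in every $W$. The paper gets connectedness by construction: it uses a decreasing sequence of Jordan regions $U_n\subset V_i$, each bounded by $\bar A_i$ and an arc in $V_i$ joining $\zeta_1,\zeta_2$. With such $U_n$ one still has to check sequences approaching $\zeta_1,\zeta_2$ between $\tilde C$ and $\partial U_n$, which never enter $U_n$.

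A way to get both properties at once is Carath\'eodory's theorem for the Jordan region $V_i$. A conformal map $\psi\colon V_i\to\D$ extends to a homeomorphism $\bar V_i\to\bar\D$. Set $W_\epsilon=\psi^{-1}(\{w\in\D:\dist(w,\psi(\bar A_i))<\epsilon\})$; these are connected, being pullbacks of collars of a circular arc in the disk. Uniform continuity of $\psi$ and $\psi^{-1}$ gives $\dist(w_n,\bar A_i)\to 0$ if and only if $w_n$ is eventually in every $W_\epsilon$. With $W_\epsilon$ in place of $N_\epsilon\cap V_i$, your two inclusions go through verbatim, and the intersection of the decreasing continua $\bar{\phi(W_\epsilon)}$ is a continuum.
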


\begin{proof}
By the Riemann mapping theorem there is a conformal map $\phi$ from $\D$ onto $G$. By \cite{Pommerenke:conformal}*{Prop.\ 2.14}, $\phi^{-1}(C)$ is an open Jordan arc $J\subset \D$ with distinct endpoints on $\partial \D$. By the Jordan curve theorem, $\D\setminus J$ is the union of two Jordan regions $\Omega_1,\Omega_2$. We set $G_i=\phi(\Omega_i)$ for $i=1,2$ and \ref{lemma:crosscut_connected:1} follows. Fix $i\in \{1,2\}$. The set $\partial G_i\setminus C$ is precisely the set of accumulation points of $\phi(z)$ as $z\to E=\partial \Omega_i\setminus J$. There exists a decreasing sequence of Jordan regions $U_n\subset \Omega_i$, $n\in \N$, such that $\partial U_n$ is the union of $E$ with an arc in $\Omega_i$ that has the same endpoints as $E$ for each $n\in \N$. We have
$$\partial G_i\setminus C= \bigcap_{n=1}^\infty \bar{\phi(U_n)},$$
so $\partial G_i\setminus C$ is the intersection of a decreasing sequence of continua. Thus, it is a continuum and \ref{lemma:crosscut_connected:2} follows. 
\end{proof}

A metric space $(X,d)$ has \textit{bounded turning} if there exists a constant $L\geq 1$ such that for each pair $x,y\in X$ there exists a connected set $E\subset X$ that contains $x$ and $y$ with $\diam E\leq L d(x,y)$. It is a consequence of \cite{Whyburn:topology}*{II.5.2} that if, in addition, $(X,d)$ is locally compact, then for every $L'>L$ any two points $x,y\in X$ can be connected by a path $\gamma$ with $\diam |\gamma|\leq L'd(x,y)$. In particular, $(X,d)$ is pathwise connected, and thus \textit{arcwise connected} \cite{Wilder:topology}*{Corollary 31.6}; that is, any two points of $X$ can be connected by a Jordan arc.

\begin{theorem}[Bounded turning]\label{theorem:bounded_turning}
Let $\Omega\subset \C$ be an $L$-John domain for some $L\geq 1$. Then each component of $\C\setminus \Omega$ has $(L+1)$-bounded turning and in particular, it is arcwise connected.
\end{theorem}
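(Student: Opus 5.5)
The plan is to fix a complementary component $K$ of $\Omega$ and two points $x,y\in K$ with $d=|x-y|$. I will build a connected set $E\subset K$ containing $x,y$ by starting from the segment $[x,y]$ and replacing every piece of it that leaves $K$ by a continuum in $K$. Each such replacement should lie within distance $Ld/2$ of the segment, which gives $\diam E\le d+2\cdot Ld/2=(L+1)d$, exactly the claimed constant. Arcwise connectedness then follows from the remark after the definition of bounded turning, since $K$ is closed and hence locally compact.

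\emph{Setting up crosscuts.} The set $G=\widehat\C\setminus K$ is a simply connected domain containing $\Omega$. The open set $[x,y]\setminus K$ is a countable union of open subsegments $C_i$. Each $C_i$ has endpoints in $K$, so it is a crosscut of $G$ with $\diam C_i\le d$. By Lemma \ref{lemma:crosscut_connected}, $G\setminus C_i$ has two components $G_i^1,G_i^2$, and each set $\partial G_i^k\setminus C_i$ is connected. These sets lie in $K$, and their closures contain both endpoints of $C_i$. Note that $C_i\cap\Omega\neq\emptyset$: otherwise the connected set $C_i$ would lie in a single closed component $p\ne K$ of $\C\setminus\Omega$, and then $p$ would contain the endpoints of $C_i$, which lie in $K$.

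\emph{Key John estimate.} Let $u\in G_i^1\cap\Omega$ and $v\in G_i^2\cap\Omega$, and let $\gamma$ be a John curve from $u$ to $v$. Since $\gamma\subset\Omega\subset G$, it must leave $G_i^1$ through $C_i$, say at a point $z=\gamma(t)\in C_i$. Then $\dist(z,\partial\Omega)\le |C_i|/2\le d/2$, because the endpoints of $C_i$ lie in $K\cap\partial\Omega$ (they are limits of points of $C_i\setminus K$). Hence \eqref{def:john} gives
$$\min\{|u-z|,|v-z|\}\le \min\{\ell(\gamma|_{[0,t]}),\ell(\gamma|_{[t,1]})\}\le Ld/2.$$
So, for every pair $(u,v)$, one of the two points lies within $Ld/2$ of $C_i$. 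It follows that for one of the two sides, say $G_i^1$, all points of $G_i^1\cap\Omega$ lie in the $Ld/2$-neighbourhood of $C_i$. Indeed, if each side had a point of $\Omega$ farther than $Ld/2$ from $C_i$, pairing these two points would contradict the estimate. I then want to conclude that $E_i:=\overline{\partial G_i^1\setminus C_i}\subset K$ also lies in the closed $Ld/2$-neighbourhood of $C_i$. This needs every point of $\partial G_i^1\cap K$ to be a limit of points of $G_i^1\cap\Omega$. Such a point lies in $\partial K\subset\partial\Omega$, and I will argue that a small neighbourhood of it meets $G_i^1$ in an open set that cannot be covered by complementary components other than $K$. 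Otherwise the neighbourhood would lie in the interior of $\C\setminus\Omega$, which is incompatible with the point being approachable from $\Omega$ through $C_i$ and $\partial G_i^1$. This point-set step needs some care. I expect it, together with choosing which side is the ``small'' one when $K$ is unbounded (so $\infty\notin G$), to be the main technical obstacle.

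\emph{Assembling $E$.} Set $E=([x,y]\cap K)\cup\bigcup_i E_i$. Each $E_i$ is a continuum in $K$ joining the endpoints of $C_i$, so $E$ is obtained from the segment by replacing each gap $C_i$ with a continuum spanning it. The standard argument then shows that $E$, or its closure, which still lies in $K$ and has the same diameter, is connected. One way is to parametrize: any separation of $E$ into two relatively closed sets would induce a separation of $[x,y]$. Every point of $E$ is within $Ld/2$ of $[x,y]$, so $\diam E\le d+Ld=(L+1)d$, which proves the $(L+1)$-bounded turning of $K$.
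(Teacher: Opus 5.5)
Your proposal is, in substance, the paper's proof. You split $[x,y]\setminus K$ into crosscuts $C_i$ of $\widehat\C\setminus\bar K$; take the closure, as the paper does, so that $G$ is open when $K$ is unbounded. You then apply Lemma~\ref{lemma:crosscut_connected}, run a John curve between the two sides through a point of $C_i$ within $|C_i|/2$ of $\partial\Omega$, and replace each $C_i$ by $\partial G_i^1\setminus C_i$. The only difference is in how the small side is controlled. The paper asserts that the whole small side lies in $\D(w_0,(L+1)|C_i|/2)$, which gives $\diam E_i\le (L+1)\diam C_i$ and makes $E_i\subset K$ automatic. You only control $G_i^1\cap\Omega$, so you must pass from it to the boundary. (The paper tacitly needs a fact of the same kind: the endpoints of its John curve on $\mathbb S(w_0,(L+1)r)\cap U$ and $\mathbb S(w_0,(L+1)r)\cap V$ have to lie in $\Omega$.)

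The step you left open is true, but the reason you sketch does not work. A neighbourhood of $e$ may also meet $G_i^2$, which can contain points of $\Omega$, so nothing forces it into the interior of $\C\setminus\Omega$. Argue inside $G_i^1$ only, as follows.
\begin{itemize}
\item Let $e\in(\partial G_i^1\setminus\bar C_i)\cap\C$, and let $B$ be a disk centred at $e$ with $B\cap\bar C_i=\emptyset$.
\item By Lemma~\ref{lemma:crosscut_connected}, $\partial G_i^1\subset C_i\cup\partial G\subset C_i\cup\bar K$, so $\partial G_i^1\cap B\subset K$.
\item Suppose $B\cap G_i^1\cap\Omega=\emptyset$. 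Take a component $W$ of the nonempty open set $B\cap G_i^1$. It is a connected subset of $\C\setminus\Omega$ disjoint from $K$, so $\bar W\subset p$ for some component $p\neq K$.
\item Then $\partial W\cap B\subset\partial G_i^1\cap B\subset K$, while $\partial W\subset p$. Hence $\partial W\cap B=\emptyset$, so $W$ is closed in $B$ and $W=B\ni e$. This contradicts $e\notin G_i^1$.
\end{itemize}
Thus every finite point of $X=\partial G_i^1\setminus C_i$ lies within $Ld/2$ of $C_i$.

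The point $\infty$ also needs no separate treatment. The set $X=\partial G_i^1\setminus G$ is closed and connected by Lemma~\ref{lemma:crosscut_connected}, and it contains the endpoints of $C_i$. Its finite part lies in a compact subset of $\C$, so if $\infty\in X$, then $\{\infty\}$ would be open and closed in $X$, which is impossible. So unbounded $K$ needs no special care, and there is no choice of side to make, since the John estimate selects it. Hence $E_i\subset K$, and the rest of your argument goes through, including the bound $\diam E\le d+Ld$ and the connectedness of $E$.
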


\begin{proof}
Let $P$ be a component of $\C\setminus \Omega$. First, we consider $w_1,w_2\in  P$ such that the crosscut $C=(w_1,w_2)$ is disjoint from $P$. Since $P$ is a component of $\C\setminus \Omega$, we conclude that $\C\setminus P$ is connected. Also, if $P$ is bounded, then it is compact, and $\widehat \C\setminus  \bar P$ is a simply connected domain. If $P$ is unbounded, then $\bar P=P\cup\{\infty\}$. The domain $\widehat \C\setminus \bar P =\C\setminus P$ is again simply connected. In both cases, by Lemma \ref{lemma:crosscut_connected} the crosscut $C=(w_1,w_2)$ separates $\widehat \C\setminus \bar P$ into two components $U,V$ such that $\partial U\setminus \bar P=\partial V\setminus \bar P=C$. In particular, $w_1,w_2$ lie in both $\partial U$ and $\partial V$.

Let $w_0=(w_1+w_2)/2$ and $r=|w_1-w_2|/2$. Suppose that $U,V$ are not contained in $\D(w_0,(L+1)r)$. Since $C$ intersects the boundaries of both $U,V$ and is contained in $\D(w_0,r)$, we conclude that both $U,V$ intersect $\D(w_0,(L+1)r)$. By connectedness, since $U,V$ are not contained in $\D(w_0,(L+1)r)$, both must intersect the circle $\mathbb S(w_0,(L+1)r)$. Since $\Omega$ is a John domain, there exists a curve $\gamma\colon [0,1]\to \Omega$ with $\gamma(0)\in \mathbb S(w_0,(L+1)r)\cap U$, $\gamma(1)\in \mathbb S(w_0,(L+1)r)\cap V$, and  
$$\min\{\ell(\gamma|_{[0,t]}),\ell(\gamma|_{[t,1]})\} \leq L\dist(\gamma(t),\partial \Omega)\,\,\, \text{for all $t\in [0,1]$}.$$
Since $C=\partial U\setminus \bar P=\partial V\setminus \bar P$,  there exists $t_0\in (0,1)$ such that $\gamma(t_0)\in C\subset \D(w_0,r)$. We have $\ell(\gamma|_{[0,t_0]})> Lr$, $\ell(\gamma|_{[t_0,1]})> Lr$, and $\dist(\gamma(t_0),\partial \Omega)\leq \ell(C)/2=r$. Thus,
$$Lr< \min\{\ell(\gamma|_{[0,t_0]}),\ell(\gamma|_{[t_0,1]})\}\leq L\dist(\gamma(t_0),\partial \Omega)\leq  Lr,$$
a contradiction. Therefore, one of the sets $U,V$ is contained in $\D(w_0,(L+1)r)$. Without loss of generality, we suppose that $V\subset \D(w_0,(L+1)r)$. By Lemma \ref{lemma:crosscut_connected} \ref{lemma:crosscut_connected:2}, the set $E=\partial V\setminus C$ is connected. Also, $E\subset P\cap \bar \D(w_0,(L+1)r)$, it contains $w_1,w_2$, and $\diam E\leq 2(L+1)r =(L+1)|w_1-w_2|$.

Now, suppose that $w_1,w_2\in P$ are arbitrary distinct points. If $[w_1,w_2]\subset P$ there is nothing to show, so we suppose that $[w_1,w_2]\setminus P\neq \emptyset$. The set $[w_1,w_2]\setminus P$ has countably many components $\{C_i\}_{i\in I}$ each of which is a segment in $\C\setminus P$ with endpoints in $\partial P$. By the above, for each $i\in I$ there exists a continuum $E_i\subset P$ that contains the endpoints of $C_i$ and satisfies $\diam E_i\leq (L+1)\diam C_i$. By replacing each subsegment $C_i$ of $[w_1,w_2]$ with $E_i$ we obtain a continuum $E\subset P$ that contains $w_1,w_2$ and satisfies $\diam E\leq (L+1)|w_1-w_2|$. This completes the proof.
\end{proof}

\begin{lemma}\label{lemma:count}
Let $\{E_i\}_{i\in I}$ be a collection of closed sets in $\C$. The following are quantitatively equivalent. 
\begin{enumerate}[label=\normalfont(\arabic*)]
	\item\label{lemma:count:1} There exists $N\in \N$ such that for each $z_0\in \C$ and $r>0$ the number of sets $E_i$, $i\in I$, that intersect both complementary components of the annulus $A=\mathbb A(z_0;r/2,r)$ is bounded above by $N$.
	\item\label{lemma:count:2} For each $a\geq 1$ there exists $N(a)\in \N$ such that for each $z_0\in \C$ and $r>0$ the number of sets $E_i$, $i\in I$, that intersect the ball $\D(z_0,r)$ and have diameter at least $r/a$ is bounded above by $N(a)$.
\end{enumerate}
\end{lemma}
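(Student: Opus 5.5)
The plan is to prove each implication directly by comparing the two counting conditions through a covering argument.

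\emph{From (2) to (1).} Suppose $E_i$ meets both complementary components of $A=\mathbb A(z_0;r/2,r)$. Then $E_i$ contains a point of $\bar\D(z_0,r/2)$ and a point with $|z-z_0|\ge r$, so $E_i$ intersects $\D(z_0,r)$ and $\diam E_i\ge r/2$. Applying (2) with $a=2$ shows that there are at most $N(2)$ such sets, so we may take $N=N(2)$. Connectedness of the $E_i$ plays no role here, which is consistent with their being merely closed sets.

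\emph{From (1) to (2).} Fix $a\ge1$, $z_0$ and $r$, and consider the sets $E_i$ that meet $\D(z_0,r)$ and have $\diam E_i\ge r/a$.
\begin{enumerate}[label=\normalfont(\roman*)]
\item Cover $\D(z_0,r)$ by $M(a)$ discs $\D(w_k,\rho)$ with $\rho=r/(8a)$ and centers $w_k\in\D(z_0,r)$. A standard grid argument gives $M(a)\lesssim a^2$.
\item Each counted $E_i$ contains a point $x\in\D(w_k,\rho)$ for some $k$. Since $\diam E_i\ge r/a=8\rho$, it also contains a point $y$ with $|y-x|\ge 4\rho$, and hence $|y-w_k|\ge 3\rho>2\rho$.
\item Then $E_i$ meets both $\bar\D(w_k,\rho)$ and $\C\setminus\D(w_k,2\rho)$, which are the two complementary components of $\mathbb A(w_k;\rho,2\rho)$. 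By (1), each $k$ accounts for at most $N$ sets.
\end{enumerate}
This gives $N(a)=M(a)\,N$. The point of (ii) is to guarantee a far point of $E_i$ from a nearby point; this follows from $\diam E_i\ge 8\rho$ by the triangle inequality, because if every point of $E_i$ were within $4\rho$ of $x$, the diameter would be below $8\rho$.

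No real obstacle is expected. The only care needed is to track which closed or open disc forms each complementary component of the annulus, and to make the covering count explicit enough for the equivalence to be quantitative, with $N(a)\le C a^2 N$ for an absolute constant $C$.
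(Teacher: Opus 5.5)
Your proposal is correct and follows the paper's proof: $(2)\Rightarrow(1)$ with $N=N(2)$, and $(1)\Rightarrow(2)$ by covering $\D(z_0,r)$ with $M(a)$ small discs and applying (1) to the annulus around each disc, which gives the bound $M(a)N$. The only difference is cosmetic: you use radius $r/(8a)$ with an explicit far-point argument, whereas the paper uses radius $r/(4a)$ and observes that each counted $E_i$ is not contained in the doubled disc.
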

\begin{proof}
	Suppose that \ref{lemma:count:2} is true and consider an annulus $A=\mathbb A(z_0;r/2,r)$. If $E_i$ intersects both complementary components of $A$, then $E_i\cap \D(z_0,r)\neq \emptyset$ and $\diam E_i\geq r/2$. By \ref{lemma:count:2}, the number of such sets is bounded above by $N(2)$. Conversely, suppose that \ref{lemma:count:1} holds. Let $a\geq1$ and consider a ball $\D(z_0,r)$. We cover $\D(z_0,r)$ by a finite number of balls $D_j=\D(z_j,r/(4a))$, $j\in \{1,\dots,m\}$, where $m\leq M(a)$. Let $j\in \{1,\dots,m\}$. If   $E_i\cap D_j\neq \emptyset$ and $\diam E_i\geq r/a$, then $E_i$ is not contained in $2D_j$. Thus, $E_i$ intersects both complementary components of $\mathbb A(z_j;r_j/(4a),r_j/(2a))$. By \ref{lemma:count:1}, we conclude that the number of such sets is at most $N$. Thus, the number of sets $E_i$ that intersect $\D(z_0,r)$ and have diameter at least $r/a$ is at most $M(a)N$.
\end{proof}

\begin{proposition}[Packing condition]\label{prop:packing} Let $\Omega\subset \C$ be an $L$-John domain for some $L\geq 1$. Then for each $z_0\in \C$ and $r>0$ the number of components of $\C\setminus \Omega$ that intersect $\D(z_0,r)$ and have diameter at least $r/a$, where $a\ge 1$, is bounded above by a constant  $N(L,a)$.
\end{proposition}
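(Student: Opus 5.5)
By Lemma \ref{lemma:count} it suffices to verify condition \ref{lemma:count:1}. We fix an annulus $A=\mathbb A(z_0;r/2,r)$ and bound by a constant $N(L)$ the number of components of $\C\setminus\Omega$ that meet both $\bar\D(z_0,r/2)$ and $\C\setminus \D(z_0,r)$. By Theorem \ref{theorem:bounded_turning}, each such component $P$ is arcwise connected. Hence it contains an arc joining its point in $\bar \D(z_0,r/2)$ to its point outside $\D(z_0,r)$. A sub-arc of this arc is a continuum $\alpha_P\subset P\cap \bar A$ connecting the two boundary circles of $A$. The continua $\alpha_P$ for distinct $P$ are pairwise disjoint, since they lie in distinct components.

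The key step is to show that there are few such crossing continua. The idea is that between two consecutive crossings the domain $\Omega$ forms a narrow corridor, and this contradicts the John condition.

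We take the middle circle $S=\mathbb S(z_0,3r/4)$. Each $\alpha_P$ meets $S$, and each also meets the circle $\mathbb S(z_0,5r/8)$ and the circle $\mathbb S(z_0,7r/8)$. Suppose there are $M$ such continua. We order points $x_P\in\alpha_P\cap S$ by angle. By pigeonhole, two angularly consecutive ones, say $\alpha_1,\alpha_2$, have $|x_1-x_2|\le 2\pi r/M$. Consider a short arc $\sigma$ of $S$ from $x_1$ to $x_2$ of length at most $5r/M$. We may shrink $\sigma$ so that it meets $\alpha_1,\alpha_2$ only at its endpoints. It must still meet $\Omega$: otherwise $\alpha_1\cup\sigma\cup\alpha_2$ would lie in a single complementary component, which is impossible.

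A point $y\in\sigma\cap\Omega$ then has $\dist(y,\partial\Omega)\le 5r/M$. The plan is to find points $u,v\in\Omega$ at distance about $r/16$ from $y$, in the region between $\alpha_1$ and $\alpha_2$ but on "opposite sides" of $y$. Any curve in $\Omega$ from $u$ to $v$ would then have to pass within $O(r/M)$ of $\partial\Omega$ near $y$, while both of its subarcs have length at least $r/16$. The John condition would then force $r/16\le L\cdot O(r/M)$, that is, $M\le CL$.

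I expect this topological separation step to be the main obstacle. The difficulty is to make rigorous that the union $\alpha_1\cup\alpha_2$ together with the two circles $\mathbb S(z_0,5r/8)$ and $\mathbb S(z_0,7r/8)$ pins down a "quadrilateral" region. Inside this region, every curve from the inner part near $\mathbb S(z_0,5r/8)$ to the outer part near $\mathbb S(z_0,7r/8)$ must cross the short arc $\sigma$, or else exit through the complement.

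To handle this, I would use the same device as in the proof of Theorem \ref{theorem:bounded_turning}. The crosscut $\sigma$, together with the complementary pieces, splits the relevant region into two sides. By planarity, any John curve joining points on the two sides passes through $\sigma$ or near it. It must therefore pass through a point $\gamma(t_0)$ with $\dist(\gamma(t_0),\partial\Omega)\le \ell(\sigma)$. Both subcurves from $\gamma(t_0)$ must reach distance at least about $r/8$, so each has length at least $r/8-5r/M$. The John inequality then gives $r/8-5r/M\le 5Lr/M$, so $M\le 40(L+1)$. This yields $N=40(L+1)$ in \ref{lemma:count:1}, and Lemma \ref{lemma:count} gives the constant $N(L,a)$.

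If choosing $u,v$ in $\Omega$ on both sides proves delicate, for instance because one side is almost entirely complement, I would fall back on choosing the pair $\alpha_1,\alpha_2$ among consecutive triples. That choice guarantees the sector between them contains a point of $\Omega$ at distance at least $cr$ from $\partial\Omega$. Alternatively, I would apply the John condition to endpoints on the circles $\mathbb S(z_0,5r/8)$ and $\mathbb S(z_0,7r/8)$ inside the sector.
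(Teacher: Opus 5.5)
\textbf{There is a genuine gap, at the step you flagged, and it cannot be patched locally.} Several pieces of your argument are sound:
\begin{itemize}
\item the reduction to condition \ref{lemma:count:1} of Lemma \ref{lemma:count};
\item the crossing arcs $\alpha_P$ obtained from Theorem \ref{theorem:bounded_turning};
\item the observation that a gap of $S$ between two distinct crossings must meet $\Omega$.
\end{itemize}
The shrinking of $\sigma$ needs a small fix, since $\sigma$ may also meet other $\alpha_{P'}$: take a component of $\sigma\setminus\bigcup_P\alpha_P$ whose endpoints lie in two different arcs.

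The failing claim is that a John curve joining points $u,v$ on the two sides of a short gap $\sigma$ must pass through or near $\sigma$. This is false. The sector between $\alpha_1$ and $\alpha_2$ opens into $\D(z_0,r/2)$ through its inner arc and into $\C\setminus\bar\D(z_0,r)$ through its outer arc. So the curve may leave through the inner arc, cross $A$ inside another, wide sector, and return to $v$ through the outer arc. John curves satisfy no length bound, so this detour costs nothing.

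Concretely, take $z_0=0$, $P_1=[r/4,2r]$ and $P_2=[ir/4,2ir]\cup[3ir/4,\,3r/4+i\eta]$. Then $\C\setminus(P_1\cup P_2)$ is $L$-John with $L$ independent of $\eta$, yet the two crossings are about $\eta$ apart on $S$. So a single narrow gap, which is all the pigeonhole step yields, is compatible with the John condition.

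The global version of your idea does not help either. The arcs $\alpha_P$ together with all the gaps $\sigma_k$ do separate the inner region from the outer one, but a John curve only has to cross one gap. This shows only that \emph{some} gap has length $\gtrsim r/L$, which gives no bound on $M$. Your fallbacks (consecutive triples, endpoints on $\mathbb S(z_0,5r/8)$ and $\mathbb S(z_0,7r/8)$) do not address the detour either.

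\textbf{The missing idea is to extract a definite contribution from every sector.} The right quantity is not how narrow a sector is, but where John curves enter it. This is what the paper does.
\begin{itemize}
\item Fix $w_0\in\Omega$ in the inner disk; if the inner disk lies in $\C\setminus\Omega$, there is only one crossing component. Work in a slightly smaller annulus $A'$ at distance $\gtrsim r$ from $w_0$.
\item The crossing arcs cut $A'$ into Jordan regions $U_1,\dots,U_M$. In each $U_i$, apply your gap argument to the crosscut of $U_i$ on the middle circle that separates its inner and outer arcs. This gives a component $D_i$ of $A'\cap\Omega$ inside $U_i$ and a point $z_i\in D_i$ on the middle circle.
\item Let $\gamma$ be a John curve from $w_0$ to $z_i$, and let $t_0$ be the last time with $\gamma(t_0)\in\partial A'$. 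Then $\gamma(t_0)\in\partial D_i\cap\Omega$ lies at distance $\gtrsim r$ from both endpoints. Hence $\dist(\gamma(t_0),\partial\Omega)\gtrsim r/L$, and $\partial D_i\cap\partial A'$ contains an arc of length $\gtrsim r/L$.
\item The $D_i$ are distinct components of $A'\cap\Omega$, so these arcs do not overlap. Comparing their total length with the length of $\partial A'$ gives $M\lesssim L$.
\end{itemize}
Detours are harmless in this argument: however $\gamma$ travels, it must finally enter $D_i$ through $\partial A'$. Narrowness of the gaps plays no role at all.
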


\begin{proof}
Consider an annulus $\mathbb A(z_0;r,2r)$. By Lemma \ref{lemma:count} it suffices to show that the number of components of $\C\setminus \Omega$ that intersect both complementary components of the annulus is bounded above by a constant $N(L)$. Let $E_1,\dots,E_n$ be distinct components of $\C\setminus \Omega$ that intersect both complementary components of $\mathbb A(z_0;r,2r)$. If $\D(z_0,r)\subset \C\setminus \Omega$, then it is immediate that $n=1$, so we assume that $\D(z_0,r)\cap \Omega\neq \emptyset$ and let $w_0\in \D(z_0,r)\cap \Omega$. Also, we suppose that $n\geq 2$.

Each $E_i$ intersects both boundary components of the annulus $A=\mathbb A(z_0;4r/3,2r )$. We first show that $A\cap \Omega$ has at least $n$ components intersecting the circle $C=\mathbb S(z_0,5r/3)$. By Theorem \ref{theorem:bounded_turning} for each $i\in \{1,\dots,n\}$ there exists a Jordan arc $\gamma_i$ in $E_i\cap \bar A$ with endpoints in distinct components of $\partial A$ but otherwise contained in $A$. By the Jordan curve theorem $A\setminus \bigcup_{i=1}^n|\gamma_i|$ is the union of exactly $n$ Jordan regions $U_1,\dots,U_n$. The numbering is such that $U_i$ is bounded by $|\gamma_i|$, $|\gamma_{i+1}|$, and two arcs from different components of $\partial A$; here $\gamma_{n+1}\equiv \gamma_1$.  

\begin{figure}
	\centering
	\begin{tikzpicture}
		\begin{scope}		
		\clip (0,0) circle (3cm);
		\fill[black!20, rounded corners=8pt] (0,4)--(0,3)--(0.3,1.7)--(0.7,2.5)--(0.8,1)--(0,0)--(1.2,0.8)--(2.5,1)--(2.2,1.6)--(1.1,1.6)--(2.2,2.5)--(3,3)--cycle;
		
		\fill[white](0,0) -- (0:2.25cm)  arc[start angle=0, end angle=75, radius=2.25cm] -- cycle;
		\draw[dashed](0,0) circle (2.25cm);
		\draw[rounded corners=8pt] (0,4)--(0,3)--(0.3,1.7)--(0.7,2.5)--(0.8,1)--(0,0)--(1.2,0.8)--(2.5,1)--(2.2,1.6)--(1.1,1.6)--(2.2,2.5)--(3,3)--cycle;
		\node at (1.2,1.2) {$\scriptstyle G_0$};
		\node at (1,2.5) {$\scriptstyle G_1$};
		\node at (2.1,1.3) {$\scriptstyle G_2$};
		\draw[thick,blue] (24.5:2.25cm) arc[start angle=24.5, end angle=45, radius=2.25cm];
		\draw[thick,blue] (53:2.25cm) arc[start angle=53, end angle=71, radius=2.25cm];
		\node at (1.7,1.25) {$\scriptstyle I_2$};	
		\node at (1,1.85) {$\scriptstyle I_1$};	
		\draw[rounded corners=8pt] (-1,0)--(-1.5,1)--(-1.8,0.5)--(-2.5,1)--(-3.1,0);
		\draw[rounded corners=8pt] (1,-1)--(1.2,-1.5)--(1.6,-1)--(2,-1.3)--(3,-0.5);
		\draw[fill=white] (0,0) circle (1.5cm);
		\end{scope}
		
		\draw (0,0) circle (3cm);
		\node[right] at (50:3cm) {$\gamma_1$};
		\node[above] at (90:3cm) {$\gamma_2$};
		\node[left] at (177:3cm) {$\gamma_3$};
		\node[right] at (350:3cm) {$\gamma_4$};
		\fill (-0.6,-0.5) circle (1.5pt) node[above]{$w_0$};
		\draw[dotted, thick, rounded corners=15pt] (-0.6,-0.5)--(-2,-0.5)--(-3,-2)--(-2,-2.5)--(-1,-2);
		\node at (-3,-2) {$\gamma$};
		\fill (-1,-2) circle (1.5pt)node[above]{$z_i$};
		\fill (-1.85,-2.36) circle (1.5pt) node[below,yshift=-0.03cm, xshift=-0.15cm]{$\gamma(t_0)$};
	\end{tikzpicture}
	\caption{The Jordan arcs $\{\gamma_i\}_{i=1}^n$ split the annulus $A$ into $n$ Jordan regions. Shown is one such region $G$ and the crosscut $I_1\subset C$ that is the common boundary of domains $G_0,G_1\subset G$, whose boundaries contain arcs of $\partial A$.}\label{fig:packing}
\end{figure}

We fix $i\in \{1,\dots,n\}$ and we show that $U_i\cap \Omega$ has a component $D_i$ that intersects $C$. Let $G=U_i$ and let $G_0$ be the component of $U_i\setminus C$ that contains in its boundary an arc of the inner circle of $\partial A$. By \cite{Pommerenke:conformal}*{Prop.\ 2.13} there exist countably many crosscuts $I_k\subset C$ of $G$ such that $I_k=G\cap \partial G_k\subset G\cap \partial G_0$, where $k\geq 1$, for a disjoint collection of domains $\{G_k\}_{k\geq 0}$ that satisfy
$$G=G_0\cup \bigcup_{k\geq 1}G_k\cup \bigcup_{k\geq 1}I_k;$$
see Figure \ref{fig:packing}. The arc of the outer circle of $\partial A$ that is part of the boundary of $G=U_i$ lies in the boundary of precisely one of the sets $\{G_k\}_{k\geq 1}$, say, in $G_1$. If both endpoints of the crosscut $I_1$ lie in $|\gamma_i|$, then together with an arc of $|\gamma_i|$ they bound a Jordan region, the boundary of which does not intersect $\partial A$, a contradiction. Similarly, not both endpoints of $I_1$ lie in $|\gamma_{i+1}|$. Therefore, the crosscut $I_1$ has one endpoint in $|\gamma_i|$ and one endpoint in $|\gamma_{i+1}|$. We set $C_i=I_1$. Next, suppose that $U_i\cap C\cap \Omega= \emptyset$ for some $i\in \{1,\dots,n\}$. Since $C_i\cap \Omega=\emptyset$ we have $C_i\subset \C\setminus \Omega$ and hence $\bar {C_i}\subset \C\setminus \Omega$ and $\bar{C_i}$ is contained in a component of $\C\setminus \Omega$. However, $\bar {C_i}$ intersects both components $E_i$ and $E_{i+1}$, which is a contradiction. Therefore, for each $i\in \{1,\dots,n\}$ the set $U_i\cap \Omega$ has a component $D_i$ that intersects $C$. Finally, note that $D_i$ is also a component of $A\cap \Omega=\bigcup_{i=1}^n(U_i\cap \Omega)$.

We fix $i\in \{1,\dots,n\}$ and let $z_i\in D_i\cap C$. Since $\Omega$ is an $L$-John domain, there exists a curve $\gamma\colon[0,1]\to \Omega$ such that $\gamma(0)=w_0$, $\gamma(1)=z_i$, and
\[\min\{\ell(\gamma|_{[0,t]}),\ell(\gamma|_{[t,1]})\}\le L\dist (\gamma(t),\partial \Omega) \,\,\, \text{for all $t\in [0,1]$.}\]
Let $t_0=\max \{t\in [0,1]:\gamma(t)\cap \partial A\neq \emptyset\}$; see Figure \ref{fig:packing}. We have $\gamma(t_0)\in \partial A$, so $|\gamma(t_0)-w_0|\geq r/3$ and $|\gamma(t_0)-z_i|\geq r/3$. Note that  $\gamma((t_0,1])\subset D_i$, so $\gamma(t_0)\in \partial D_i\cap \partial A\cap \Omega$. Moreover, since $D_i$ is a component of $A\cap \Omega$, the component of $\partial D_i\cap \partial A$ that contains $\gamma(t_0)$ is an arc $S_i $ that has length 
\begin{align*}
\ell(S_i)\geq \dist(\gamma(t_0),\partial \Omega) \geq L^{-1}\min\{|\gamma(t_0)-w_0|,|\gamma(t_0)-z_i|\}\geq  \frac{r}{3L}.
\end{align*}
Note that if $i\neq j$, then the arcs $S_i,S_j\subset \partial A$ are non-overlapping since $D_i,D_j$ are components of $A\cap \Omega$.  Thus, 
$$n\cdot \frac{r}{3L}\leq \sum_{i=1}^n \ell(S_i) \leq \ell(\partial A)= 2\pi (2r+ 4r/3).$$
We conclude that $n\leq 20\pi L\eqqcolon N(L)$. Our claim is proved.
\end{proof}

\begin{lemma} \label{diamanddistbounded} Let $\Omega\subset \C$ be a bounded $L$-John domain for some $L\geq 1$. Then there exists a point $z_0\in \Omega$ such that
\[ \frac{1}{4L}\diam \Omega \le \dist (z_0,\partial \Omega)\le \diam \Omega.\]
\end{lemma}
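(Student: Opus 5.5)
The upper bound is immediate. Since $\Omega$ is bounded, $\partial\Omega\neq\emptyset$, and for any $z_0\in\Omega$ a ray from $z_0$ must exit $\Omega$ through a boundary point $\zeta$. Hence $\dist(z_0,\partial\Omega)\le |z_0-\zeta|\le \diam\bar\Omega=\diam\Omega$. The real content is the lower bound, and for it I plan to take $z_0$ to be the midpoint, in arclength, of a John curve joining two nearly diametral points.

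For the construction, fix $\epsilon>0$ small and choose $z,w\in\Omega$ with $|z-w|\ge (1-\epsilon)\diam\Omega$. Let $\gamma\colon[0,1]\to\Omega$ be an $L$-John curve from $z$ to $w$ as in Definition \ref{definition:john}. Since $\ell(\gamma)\ge |z-w|>0$ and $t\mapsto \ell(\gamma|_{[0,t]})$ is continuous and nondecreasing from $0$ to $\ell(\gamma)$, there is $t_0$ with
$$\ell(\gamma|_{[0,t_0]})=\ell(\gamma|_{[t_0,1]})=\ell(\gamma)/2.$$
Set $z_0=\gamma(t_0)$. Condition \eqref{def:john} at $t_0$ gives
$$\dist(z_0,\partial\Omega)\ge \frac{\ell(\gamma)}{2L}\ge \frac{|z-w|}{2L}\ge \frac{(1-\epsilon)\diam\Omega}{2L}.$$

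It remains to remove $\epsilon$. Taking $\epsilon=1/2$ yields exactly the constant $1/(4L)$ claimed in the statement. Alternatively, one could use a compactness argument to obtain $1/(2L)$, but this is not needed.

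The only potential obstacle is ensuring that $\gamma$ is rectifiable, so that $\ell(\gamma)<\infty$ and the midpoint $t_0$ exists. This is harmless: if $\ell(\gamma)=\infty$, then the left-hand side of \eqref{def:john} at the parameter where the two arclengths balance would be infinite, which contradicts the finiteness of $\dist(\gamma(t),\partial\Omega)$ for a bounded domain. So every John curve here has finite length, and the argument above goes through.
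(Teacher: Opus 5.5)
Your proof is correct and is essentially the paper's argument: the paper also joins two points $z_1,z_2\in\Omega$ with $|z_1-z_2|\ge d/2$ by a John curve, but it takes $z_0=\gamma(t_0)$ with $|z_1-\gamma(t_0)|=d/4$ (so $|z_2-z_0|\ge d/4$) and uses only that arclength dominates chord length, which gives the same $d/(4L)$ and needs no rectifiability. Your rectifiability remark is true but slightly misjustified: an infinite-length curve need not have any parameter at which both partial lengths are infinite, since the length may blow up only as $t\uparrow t^*$ from one side. The correct statement is that $\sup_t\min\{\ell(\gamma|_{[0,t]}),\ell(\gamma|_{[t,1]})\}=\infty$ when $\ell(\gamma)=\infty$, which you get by letting $t\to t^*=\sup\{t:\ell(\gamma|_{[0,t]})<\infty\}$ from the appropriate side.
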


\begin{proof} Let $d=\diam \Omega$. Then there are two points $\zeta_1,\zeta_2\in \partial \Omega$ such that $|\zeta_1-\zeta_2|=d$. We take $z_1,z_2 \in \Omega$ such that $|z_i-\zeta_i|\le d/4$ for $i\in\{1,2\}$. Thus, we have
\[|z_1-z_2|\ge d-d/4-d/4=d/2.\]  
Since $\Omega$ is an $L$-John domain, there exists a curve $\gamma \colon [0,1]\to \Omega$ such that $\gamma(0)=z_1$, $\gamma(1)=z_2$, and
\begin{align}\label{johncurvepropert}
\min\{\ell (\gamma|_{[0,t]}), \ell(\gamma|_{[t,1]} )\} \leq L \dist(\gamma(t),\partial \Omega)\,\,\, \text{for all $t\in [0,1]$.}
\end{align}
Since $|z_1-z_2|\geq d/2$, the curve $\gamma$ is not contained in $\D(z_1,d/4)$. Thus, there exists $t_0\in (0,1)$ such that $|z_1-\gamma(t_0)|=d/4$. We have
\[|\gamma(t_0)-z_2|=|(z_1-z_2)-(z_1-\gamma (t_0))|\ge d/2-d/4=d/4.\] 
Therefore, if $z_0=\gamma(t_0)$ then by \eqref{johncurvepropert} we infer that
\[ d\ge \dist (z_0,\partial \Omega)\ge L^{-1}\min\{|z_0-z_1|,|z_0-z_2|\}=\frac{d}{4L},\]
which completes the proof.
\end{proof}

Recall that if $\Omega\subset \widehat \C$ is a domain, then $\mathcal C(\Omega)$ is the set of components of $\widehat \C\setminus \Omega$. Note that bounded components of $\C\setminus \Omega$ correspond precisely to the elements of $\mathcal C(\Omega)$ that do not contain $\infty$. 

\begin{proposition}[Bounded overlap]\label{prop:overlap}
Let $\Omega\subset \widehat \C$ be a domain satisfying the following conditions for some $L\geq 1$.
\begin{enumerate}[label=\normalfont(\arabic*)]

\item\label{prop:overlap:1} For each $z_0\in \C$ and $r>0$ the number of components $p\in \mathcal C(\Omega)$ that intersect both complementary components of the annulus $\mathbb A(z_0;r/2,r)$ is bounded above by $L$.
\item\label{prop:overlap:2} There exists a set $\mathcal C\subset \mathcal C(\Omega)$ such that for each $p\in \mathcal C$ there exists a ball $B_p\subset \Omega$ with
\begin{align*}
\diam B_p \leq L\diam p\quad \text{and}\quad  p\subset LB_p.
\end{align*}
\end{enumerate}
Then $$\sum_{p\in \mathcal C}\chi_{\frac{1}{2}B_p}\leq N(L).$$
\end{proposition}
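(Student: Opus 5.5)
Fix a point $z\in\C$ and let $\mathcal C_z$ be the set of $p\in\mathcal C$ with $z\in\frac12 B_p$. The goal is to bound $\#\mathcal C_z$ by a constant depending only on $L$. The plan has two steps: first force all balls $B_p$, $p\in\mathcal C_z$, to have comparable radii, and then apply the packing condition \ref{prop:overlap:1}, in the form of Lemma \ref{lemma:count} \ref{lemma:count:2}, at that scale.

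\emph{Step 1 (comparability of radii).} Write $B_p=\D(c_p,r_p)$ and let $p,q\in\mathcal C_z$ with $r_p\ge r_q$. Since $B_p\subset\Omega$, the complementary component $q$ does not meet $B_p$. From $p\subset LB_p$ and $\diam B_p\le L\diam p$ we get $\diam p\ge 2r_p/L>0$, so $p$ is not a point and $q$ has positive diameter $\ge 2r_q/L$. The question is whether $r_q$ can be much smaller than $r_p$. It can: the small ball $B_q$ sits near the center $z$ while $q$ lies in $LB_q$, and so $q$ must leave $B_p$, which has $\dist(z,\partial B_p)\ge r_p/2$. This forces $Lr_q\ge r_p/2$, that is, $r_q\ge r_p/(2L)$. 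Concretely, $q\subset \D(c_q,Lr_q)$, $|c_q-z|<r_q/2$, and $q\cap B_p=\emptyset$ while $\D(z,r_p/2)\subset B_p$. Hence $q\cap \D(z,r_p/2)=\emptyset$, and since $q\subset \D(z,(L+\tfrac12)r_q)$ we get $(L+\tfrac12)r_q\ge r_p/2$. So all radii $r_p$, $p\in\mathcal C_z$, lie in an interval $[R/(2L+1),R]$, where $R=\max r_p$; if the maximum is not attained, take a supremum and lose a factor of $2$.

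\emph{Step 2 (packing).} Each $p\in\mathcal C_z$ is contained in $\D(z,(L+1)R)$, because $p\subset LB_p$ and $|c_p-z|<r_p/2$. It also has $\diam p\ge 2r_p/L\ge 2R/(L(2L+1))$. Applying Lemma \ref{lemma:count} (implication \ref{lemma:count:1}$\Rightarrow$\ref{lemma:count:2}, with $N=L$) to the ball $\D(z,(L+1)R)$ with $a\simeq L^2$, we find that the number of such $p$ is at most $N(L)$. The components $p$ are distinct, so this bounds $\sum_{p\in\mathcal C}\chi_{\frac12B_p}(z)$.

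The main obstacle is Step 1, where one must use that $B_p\subset\Omega$ prevents $q$ from entering $B_p$. If $\mathcal C$ contains the component containing $\infty$, it is unbounded and the diameter bound fails, so that component contributes at most $1$. I expect the remaining bookkeeping to be routine.
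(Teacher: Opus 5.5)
Your proof is correct and follows the paper's argument. Both use $B_p\subset\Omega$ to keep $q$ out of $B_p$, which forces $(L+\tfrac12)r_q\ge r_p/2$, and both conclude with the packing condition via Lemma \ref{lemma:count} \ref{lemma:count:2}. Your choice to fix $z$ and work with the single radius $R=\sup_{p\in\mathcal C_z} r_p$ is a tidy way to get one ball of fixed size. This $R$ is finite and needs no factor-$2$ loss, since $r_p<(2L+1)r_q$ for all $p,q\in\mathcal C_z$. The paper instead uses the $q$-dependent ball $\D(z_p,(2L+1)r_q)$, which tacitly also needs the symmetric bound $r_q\le(2L+1)r_p$.

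There are only minor slips. ``It can'' should read ``It cannot''. Your heuristic bound should be $r_q>r_p/(2L+1)$ rather than $r_p/(2L)$. The parameter in Lemma \ref{lemma:count} is $a=(L+1)L(2L+1)/2\simeq L^3$, not $L^2$.
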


\begin{proof} 
For $p\in \mathcal C$ we write $B_p=\D(z_p,r_p)$. Suppose that $p\in \mathcal C$ is fixed and $q\in \mathcal C$ is such that $\frac{1}{2}B_p\cap \frac{1}{2}B_q\neq \emptyset$, so $|z_p-z_q|<(r_p+r_q)/2$. Thus, for each $z\in LB_q$ we have
\begin{align*}
|z-z_p|\leq |z-z_q|+|z_q-z_p|<Lr_q+ (r_p+r_q)/2= (L+1/2)r_q+r_p/2,
\end{align*}
which implies that 
\begin{align}\label{prop:overlap:inclusion}
LB_q\subset \D(z_p,(L+1/2)r_q+r_p/2).
\end{align}
If $(L+1/2)r_q<r_p/2$, then by \eqref{prop:overlap:inclusion} we have $LB_q\subset B_p$. By assumption \ref{prop:overlap:2}, we obtain $q\subset B_p$, which is a contradiction, since $B_p\subset \Omega$. Therefore, we have
\begin{align*}
(L+1/2)r_q\geq r_p/2.
\end{align*}
By \eqref{prop:overlap:inclusion}, we obtain $LB_q\subset \D(z_p,(2L+1)r_q)$. By assumption \ref{prop:overlap:2}, we have $q\subset \D(z_p, (2L+1)r_q)$ and $\diam q\geq L^{-1}\diam B_q=2L^{-1}r_q$. By assumption \ref{prop:overlap:1} and Lemma \ref{lemma:count} \ref{lemma:count:2}, the number of elements $q\in \mathcal C$ that satisfy $q\cap \D(z_p, (2L+1)r_q)\neq \emptyset$ and $\diam q\geq 2L^{-1}r_q$ is bounded above by a constant $N(L)$. Therefore, the number of elements $q\in \mathcal C$ such that $\frac{1}{2}B_p\cap \frac{1}{2}B_q\neq \emptyset$ is bounded above by $N(L)$.
\end{proof}

\begin{proposition}[Fat shadows]\label{prop:shadows} 
Let $\Omega\subset \C$ be an $L$-John domain for some $L\geq 1$. Let $\mathcal C$ be the set of non-degenerate components $p\in \mathcal C(\Omega)$ that do not contain $\infty$.  Then there is a constant $k=k(L)\ge 1$ such that for each $p\in \mathcal C$ there exists a ball $B_p\subset \Omega$ with
\begin{align}\label{prop:shadow:conclusion}
\diam B_p \le k\diam p, \quad p\subset kB_p, \quad \text{and} \quad \sum_{p\in \mathcal C}\chi_{B_p}\leq k.
\end{align}
\end{proposition}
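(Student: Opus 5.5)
The plan is to build $B_p$ by hand, as in Lemma \ref{diamanddistbounded}, using a John curve that starts near two diametrically opposite points of $p$. Bounded overlap will then come from Proposition \ref{prop:overlap}, after shrinking the balls by a factor $1/2$.

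\emph{Construction of the ball.} Fix $p\in\mathcal C$ and set $d=\diam p>0$; then $p$ is compact since $\infty\notin p$.
\begin{enumerate}[label=\normalfont(\arabic*)]
\item Choose $\zeta_1,\zeta_2\in p$ with $|\zeta_1-\zeta_2|=d$. The diameter of a compact planar set is attained at points of its boundary, so we may take $\zeta_1,\zeta_2\in\partial p$.
\item Since $p$ is a component of $\C\setminus\Omega$, we have $\partial p\subset\partial\Omega$. Hence we can pick $z_i\in\Omega$ with $|z_i-\zeta_i|\le d/8$, which gives $|z_1-z_2|\ge 3d/4$.
\item Take an $L$-John curve $\gamma$ from $z_1$ to $z_2$. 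Choose $t_0$ with $|\gamma(t_0)-z_1|=d/4$ and set $z_p=\gamma(t_0)\in\Omega$. Then $|z_p-z_2|\ge d/2$.
\item By the John condition, $\dist(z_p,\partial\Omega)\ge L^{-1}\min\{d/4,d/2\}=d/(4L)$.
\item On the other side, $\dist(z_p,\partial\Omega)\le|z_p-\zeta_1|\le d/4+d/8<d$.
\end{enumerate}
Let $B'_p=\D(z_p,\rho_p)$ with $\rho_p=\dist(z_p,\partial\Omega)$. Then $B'_p\subset\Omega$, because $B'_p$ is connected, contains $z_p\in\Omega$ and misses $\partial\Omega$. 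We also have $\diam B'_p\le 2d$. Every point of $p$ lies within $d$ of $\zeta_1$, so $p\subset\D(z_p,2d)\subset 8L\,B'_p$, using $\rho_p\ge d/(4L)$.

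\emph{Bounded overlap.} By Proposition \ref{prop:packing} and Lemma \ref{lemma:count}, the number of bounded components of $\C\setminus\Omega$ meeting both complementary components of any annulus $\mathbb A(z_0;r/2,r)$ is at most some $N_0(L)$. Allowing for the possible unbounded component, the number of elements of $\mathcal C(\Omega)$ doing so is at most $N_0(L)+1$. Thus Proposition \ref{prop:overlap} applies with constant $L'=\max\{N_0(L)+1,\,8L\}$ and the balls $B'_p$, giving $\sum_{p\in\mathcal C}\chi_{\frac12 B'_p}\le N(L')$.

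Finally, set $B_p=\tfrac12 B'_p\subset\Omega$. Then $\diam B_p\le d$, $p\subset 16L\,B_p$, and the overlap sum is bounded by $N(L')$. Taking $k=\max\{16L,N(L')\}$ gives \eqref{prop:shadow:conclusion}.

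The only delicate point is step (1)–(2): the anchor points $\zeta_i$ must lie on $\partial\Omega$, so that points of $\Omega$ can be found close to them and the upper bound in (5) holds. This is why the diameter is realized on $\partial p$ rather than at arbitrary points of $p$. Everything else is bookkeeping of constants.
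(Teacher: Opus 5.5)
Your proof is correct, but it builds the ball differently from the paper; the bounded-overlap step (verify the hypotheses of Proposition \ref{prop:overlap}, then halve the balls) is the same in both.

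The paper fixes a global base point $z_0$. When $\Omega$ is bounded, $z_0$ is a point with $\dist(z_0,\partial\Omega)\ge \diam\Omega/(4L)$ from Lemma \ref{diamanddistbounded}. When $\Omega$ is unbounded, $z_0$ is a point of $\Omega\cap\mathbb S(0,6R_p)$. The paper joins $z_0$ by a John curve to a point $z_p$ within $d_p/(8L)$ of some $x_p\in\partial p$. It then centers $B_p$ at the point of that curve at distance $d_p/(16L)$ from $z_p$. The two cases are needed only to make the curve long enough, namely $|z_p-z_0|>d_p/(8L)$, so that the John estimate at that point gives a radius comparable to $d_p$.

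You instead run the argument of Lemma \ref{diamanddistbounded} on $p$ itself, joining points near two diametral points of $\partial p$. The required length then comes from $p$ rather than from a deep base point. This makes the construction local and removes both the dependence on Lemma \ref{diamanddistbounded} and the bounded/unbounded case split. It also gives the better inclusion $p\subset 16L\,B_p$, compared with the paper's $p\subset 48L^2 B_p$. The paper's route follows the classical picture of balls placed along John curves toward a center, but it has no advantage here.

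Adding $1$ to the packing constant for the element of $\mathcal C(\Omega)$ containing $\infty$ is harmless extra care. It is not strictly needed, since any such element meeting the inner disk contains an unbounded component of $\C\setminus\Omega$ already counted by Proposition \ref{prop:packing}.
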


\begin{proof} 
Let $p\in \mathcal C$ and $d_p=\diam p$. Suppose first that $\Omega$ is bounded. By Lemma \ref{diamanddistbounded} there is a point $z_0\in \Omega$ such that for $r_0=\dist(z_0,\partial \Omega)$ we have
\begin{align}\label{r0d}
\frac{1}{4L}\diam \Omega\le r_0 \le \diam\Omega.
\end{align}
Let $x_p\in \partial p$ and $z_p\in \Omega$ such that $|x_p-z_p|< \frac{d_p}{8L}$. There is a curve $\gamma_p$ in $\Omega$ connecting $z_0$ to $z_p$ as in Definition \ref{definition:john}. We observe that $|x_p-z_0|\ge r_0$ and 
\begin{align}\label{distdiam}
\diam |\gamma_p|&\ge |z_p-z_0| \ge \big||z_p-x_p|-|x_p-z_0|\big|> r_0-\frac{d_p}{8L}\geq \frac{d_p}{8L},
\end{align}
where the last inequality follows from \eqref{r0d} and the fact that $d_p\leq \diam \Omega$.

Now, suppose that $\Omega$ is unbounded and our goal is to show a similar inequality as \eqref{distdiam}.  There is $R_p>0$ such that $p\subset \D(0,R_p)$ and hence $d_p=\diam p \in (0,2R_p)$. Since $\Omega$ is an unbounded domain, there is a point $z_0\in \Omega\cap \mathbb{S}(0,6R_p)$. Let $x_p\in\partial p$ and $z_p\in \Omega$ such that $|z_p-x_p|< \frac{d_p}{8L} < \frac{d_p}{2}< R_p$. Then there exists a curve $\gamma_p$ in $\Omega$ connecting $z_0$ to $z_p$ as in Definition \ref{definition:john}. Note that $z_p\in \D(0,2R_p)$, so  
\begin{align}\label{distdiam:unbounded}
\diam |\gamma_p|\ge |z_p-z_0|> 4R_p> 2d_p > \frac{d_p}{8L}.
\end{align}
That is, inequality \eqref{distdiam} is also true in this case.

Hence, in both cases there is a point $w_p\in |\gamma_p|$ with $|w_p-z_p|=\frac{d_p}{16L}$. So, we have
\begin{align}\label{assumdistance}
|w_p-x_p| \le |w_p-z_p|+|z_p-x_p|< \frac{d_p}{16L}+ \frac{d_p}{8L}=\frac{3d_p}{16L}.
\end{align}
Moreover, as we proved in \eqref{distdiam} and \eqref{distdiam:unbounded}, we have
\[|z_p-z_0|> \frac{d_p}{8L} \]
and thus
\begin{align}\label{forjpro}
|w_p-z_0|\ge \big||w_p-z_p|-|z_p-z_0|\big|> \frac{d_p}{8L}- \frac{d_p}{16L}=\frac{d_p}{16L}.
\end{align}

Let $B_p=\D(w_p,r_p)$, where $r_p=\dist (w_p,\partial \Omega)$, so $B_p\subset \Omega$. By \eqref{assumdistance} we infer that
\[ r_p\le |w_p-x_p|< \frac{3d_p}{16L}. \]
Hence,
\begin{align}\label{ann1}
\diam B_p< \frac{3}{8L}\diam p<\diam p.
\end{align}
Moreover, by \eqref{forjpro} and the John property we deduce that
\begin{align}\label{assumjohn}
Lr_p&=L\dist(w_p,\partial \Omega)\geq \min \{|w_p-z_0|,|w_p-z_p|\}= \frac{d_p}{16L}.
\end{align}
Now, let $z\in p$. By \eqref{assumdistance} and \eqref{assumjohn} we have
\[|z-w_p|\le |z-x_p|+|x_p-w_p|< d_p+\frac{3d_p}{16L}< \frac{3d_p}{2}\le 24L^2r_p.\]
Hence, $z\in \D(w_p,24L^2r_p)=24L^2B_p$.  So, we have proved that $p\subset 24L^2B_p$. 

We now apply Proposition \ref{prop:overlap} in order to obtain the final inequality in \eqref{prop:shadow:conclusion} as follows. Condition \ref{prop:overlap:1} in Proposition \ref{prop:overlap} is a consequence of Proposition \ref{prop:packing} and Lemma \ref{lemma:count}. We have shown that the balls $B_p$, $p\in \mathcal C$, satisfy $\diam B_p\leq \diam p$ (see \eqref{ann1}) and $p\subset 24L^2B_p$, as required in Proposition \ref{prop:overlap} \ref{prop:overlap:2}. Hence, by Proposition \ref{prop:overlap}, we have
$$\sum_{p\in \mathcal C} \chi_{\frac{1}{2}B_p}\leq N(L).$$
Finally, for $p\in \mathcal C$ we set $B_p'=\frac{1}{2}B_p$ and note that $B_p'\subset \Omega$, $\diam B_p'\leq \diam p$, and $p\subset 48L^2B_p'$. We have proved the desired \eqref{prop:shadow:conclusion} for $k=\max\{48L^2,N(L)\}$. 
\end{proof}

\begin{lemma}\label{lemma:diameter}
Let $\Omega\subset \widehat \C$ be a domain satisfying the packing condition in the conclusion of Proposition \ref{prop:packing}. Then for each $\varepsilon>0$ there exist at most finitely many components $p\in \mathcal C(\Omega)$ with spherical diameter greater than $\varepsilon$.
\end{lemma}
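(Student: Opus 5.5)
Assume, for contradiction, that for some $\varepsilon>0$ there are infinitely many distinct components $p_n\in \mathcal C(\Omega)$, $n\in \N$, each of spherical diameter greater than $\varepsilon$. The plan is to find a single Euclidean ball that meets infinitely many of them, each with Euclidean diameter comparable to its radius. This contradicts the packing condition of Proposition \ref{prop:packing}, which bounds their number by $N(a)$ for a suitable $a$.

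\textbf{Step 1: place the components near a common point.} Since each $p_n$ has spherical diameter greater than $\varepsilon$, it is not contained in any spherical ball of radius $\varepsilon/2$. Cover $\widehat \C$ by finitely many spherical balls of radius $\varepsilon/8$. After passing to a subsequence, all $p_n$ intersect one of these balls, say $B_0$. The $p_n$ are pairwise disjoint, so at most one of them contains $\infty$; discard it.

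\textbf{Step 2: pass to Euclidean geometry.} Apply a rotation of the sphere, which is a spherical isometry and a M\"obius map, so that $B_0$ is centered at $0$. I expect this step, rather than the counting, to be the main obstacle. The packing condition is stated in Euclidean terms for components of $\C\setminus\Omega$, so one must check how it transfers under such a rotation. To avoid the issue, I would instead choose $B_0$ and a Euclidean disk $\D(z_0,R)$ directly.
\begin{enumerate}[label=\normalfont(\roman*)]
\item Case 1: $B_0$ lies in a fixed compact part of $\C$, say within spherical distance $1$ of $0$. Then $B_0\subset \D(z_0,R)$ for some $z_0\in\C$ and $R$ depending only on $\varepsilon$. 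On the compact set $\bar\D(z_0,2R)$ the spherical and Euclidean metrics are comparable with constants depending on $R$.
\item Case 2: $B_0$ is near $\infty$. Since the spherical diameter of $p_n$ exceeds $\varepsilon$, it reaches spherical distance about $\varepsilon/4$ from $\infty$, and hence meets a fixed disk $\D(0,R')$ with $R'=R'(\varepsilon)$.
\end{enumerate}
In both cases we get a fixed Euclidean disk $\D(z_0,R)$ met by all $p_n$.

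\textbf{Step 3: bound diameters from below and count.} Each $p_n$ either leaves $\D(z_0,2R)$, so its Euclidean diameter is at least $R$, or stays inside it. In the second case, comparability of the metrics on $\bar\D(z_0,2R)$ gives Euclidean diameter at least $c(\varepsilon,R)\varepsilon$. So all $p_n$ meet $\D(z_0,R)$ and have Euclidean diameter at least $R/a$ for some $a\ge1$. This holds for all $n$ in an infinite set, contradicting Proposition \ref{prop:packing}, which allows at most $N(a)$ such components. The components are closed subsets of $\C$ once the one containing $\infty$ is removed, so Lemma \ref{lemma:count} applies if needed.
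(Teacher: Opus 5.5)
Your argument is correct, but it takes a different route from the paper. The paper passes to a subsequence converging in the Hausdorff sense to a non-degenerate continuum $E\subset\widehat \C\setminus\Omega$ and picks two distinct finite points $z,w\in E$. For large $n$ the component $p_n$ then meets $\D(z,r)$ with $r=|z-w|/2$ and has Euclidean diameter at least $r$. This violates the packing condition with $a=1$, and no explicit comparison of the spherical and Euclidean metrics is needed.

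You instead localize by pigeonhole and convert spherical diameters to Euclidean ones by hand, which is more elementary. Your Step 1 and the case split are in fact unnecessary. As your Case 2 essentially observes, every $p_n$ of spherical diameter greater than $\varepsilon$ has a point at spherical distance at least $\varepsilon/2$ from $\infty$ (not just ``about $\varepsilon/4$''). Hence all bounded $p_n$ meet one fixed disk $\D(0,R_\varepsilon)$. Moreover, the spherical metric is globally dominated by a constant multiple of the Euclidean one; for the geodesic metric on the unit sphere, $\sigma(z,w)\le \pi|z-w|$. So each such $p_n$ has Euclidean diameter greater than $\varepsilon/\pi$, and you need not treat components leaving $\D(z_0,2R)$ separately.

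Streamlined this way, your approach gives an explicit bound: at most $N(L,a)+1$ components of spherical diameter greater than $\varepsilon$, with $a=\max\{1,\pi R_\varepsilon/\varepsilon\}$. The paper's compactness argument is shorter but yields only finiteness.
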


\begin{proof}
Suppose, for the sake of contradiction, that there exists $\varepsilon>0$ and there exists a sequence of distinct components $p_n\in \mathcal C(\Omega)$, $n\in \N$, with spherical diameters greater than $\varepsilon$. At most one of these components can contain $\infty$, thus, by discarding it, we may assume that $p_n$ is a bounded component of $\C\setminus \Omega$ for each $n\in \N$. By passing to a subsequence, we may assume that $\{p_n\}_{n\in \N}$ converges to a non-degenerate continuum $E\subset \widehat \C\setminus \Omega$. Let $z,w\in E\cap \C$ with $z\neq w$. There exist sequences $z_n,w_n\in p_n$, $n\in \N$, converging to $z,w$, respectively. Thus, for $r=|z-w|/2$ and for all sufficiently large $n\in \N$ the set $p_n$ intersects $\D(z,r)$ and has Euclidean diameter at least $r$. This violates the packing condition in the conclusion of Proposition \ref{prop:packing}.   
\end{proof}

\section{Uniformization by circle domains}\label{section:uniformization}

\subsection{Classical and transboundary modulus}\label{section:modulus}
We recall the definition of classical modulus in $\widehat \C$. We equip $\widehat \C$ with the spherical metric $\sigma$ and spherical measure $\Sigma$.
Let $\Gamma$ be a family of curves in $\widehat \C$. A Borel function $\rho\colon \widehat \C\to [0,\infty]$ is \textit{admissible} for $\Gamma$ if $\int_{\gamma}\rho\, ds\geq 1$ for every locally rectifiable path $\gamma\in \Gamma$. The conformal modulus of $\Gamma$ is defined as
$$\Mod\Gamma=\inf_{\rho}\int\rho^2\, d\Sigma,$$
where the infimum is taken over all admissible functions $\rho$ for $\Gamma$. We note that if $\Gamma$ is a family of curves in $\C$ rather than in $\widehat \C$, then we may equip $\C$ instead with the Euclidean metric and Lebesgue measure in order to define modulus. The value of $\Mod\Gamma$ is not affected by this change. 

Given a domain $\Omega \subset \widehat \C$, recall that $\mathcal{C}(\Omega)$ is the collection of connected components of $\widehat \C \setminus \Omega$. We set $\hat{\Omega}=\widehat \C/\sim$, where $z\sim w$ if $z,w\in \Omega$ and $z=w$ or if $z,w\in p$ for some $p\in \mathcal C(\Omega)$. The corresponding quotient map is $\pi_\Omega:\widehat \C \to \hat{\Omega}$. Identifying each $z \in \Omega$ and $p \in \mathcal{C}(\Omega)$ with
$\pi_\Omega(z)$ and $\pi_\Omega(p)$, respectively, we have 
$$
\hat{\Omega}=\Omega \cup \mathcal{C}(\Omega). 
$$ 
By Moore's theorem \cite{Moore:theorem}, the quotient space $\hat \Omega$ is homeomorphic to $\widehat \C$. The quotient map $\pi_{\Omega}$ is \textit{monotone}, in the sense that the preimage of each point is a continuum. This implies the stronger property that the preimage of each continuum is a continuum. 

%By a theorem of Youngs \cite{Youngs:monotone}, $\pi_\Omega$ is the uniform limit of a sequence of homeomorphisms between the spheres $\widehat \C$ and $\hat \Omega$.

To simplify notation, we treat elements $p\in \mathcal C(\Omega)$ both as subsets of $\widehat \C$ and as points of $\hat \Omega$. A homeomorphism $f\colon \Omega_1 \to \Omega_2$ between domains in $\widehat \C$ has a homeomorphic extension $\hat{f}\colon \hat{\Omega}_1 \to \hat{\Omega}_2$. Namely, if $p\in \mathcal C(\Omega_1)$, then $\hat f(p)\in \mathcal C(\Omega_2)$ and for each sequence $\{z_n\}_{n\in \N}$ in $\Omega_1$ accumulating at $p$ the sequence $\{f(z_n)\}_{n\in \N}$ accumulates at $\hat f(p)$. See \cite{NtalampekosYounsi:rigidity}*{Section 3} for a detailed discussion.

We recall the definition of transboundary modulus, which was introduced by Schramm \cite{Schramm:transboundary}. Let $\Omega\subset \widehat \C$ be a domain. Let $\rho\colon \hat \Omega \to [0,\infty]$ be a Borel function and $\gamma\colon [a,b]\to  \hat \Omega$ be a curve. Then $\gamma^{-1}( \pi_{\Omega}(\Omega))$ has countably many components $O_j\subset [a,b]$, $j\in J$. For $j\in J$ define $\gamma_j= \gamma|_{O_j}$ and $\alpha_j = \pi_{\Omega}^{-1}\circ \gamma_j$. We define
\begin{align*}
\int_{\gamma} \rho \, ds= \sum_{j\in J}\int_{\alpha_j} \rho\circ \pi_{\Omega} \, ds,
\end{align*}
where the integral is understood to be infinite if one of the curves $\alpha_j$ is not locally rectifiable. Let $\Gamma$ be a family of curves in $\hat \Omega $.  We say that a Borel function $\rho\colon \hat \Omega \to [0,\infty]$ is \textit{admissible} for $\Gamma$ if 
\begin{align*}
\int_{\gamma} \rho \, ds + \sum_{\substack{p\in \mathcal C(\Omega)\\ |\gamma|\cap p\neq \emptyset}} \rho( p)\geq 1
\end{align*}
for each $\gamma\in \Gamma$.  The \textit{transboundary modulus} of $\Gamma$ with respect to the domain $\Omega$ is defined to be
\begin{align*}
\Mod_{\Omega}\Gamma=  \inf_{\rho}\left\{ \int_{\Omega}\rho^2 \, d\Sigma + \sum_{p\in \mathcal C(\Omega)}\rho(p)^2\right\},
\end{align*}
where the infimum is taken over all admissible functions $\rho$. We remark that if $\Gamma$ is a family of curves in $\pi_\Omega(\C)$, i.e., not passing through $\pi_\Omega(\infty)$, then we may equip $\C$ instead with the Euclidean metric and Lebesgue measure in order to define the transboundary modulus of $\Gamma$. As in the case of classical modulus, the value of $\Mod_\Omega\Gamma$ is not affected by this change. For simplicity, if $\Gamma$ is a family of curves in $\widehat \C$ we may use the notation $\Mod_\Omega\Gamma$ in place of $\Mod_\Omega \pi_\Omega(\Gamma)$.

It was observed by Schramm that transboundary modulus is invariant under conformal maps. Specifically, if $f\colon \Omega_1\to \Omega_2$ is a conformal map between domains $\Omega_1,\Omega_2\subset \widehat \C$, then for every curve family $\Gamma$ in $\hat \Omega_1$ we have 
$\Mod_{\Omega_1}\Gamma  =\Mod_{\Omega_2} \hat f(\Gamma)$.

The next result gives a comparison between classical and transboundary modulus in cofat domains. Recall the definition of a cofat domain from Section \ref{section:background}.

\begin{proposition}[\cite{Ntalampekos:schottky}*{Proposition 5.6}]\label{proposition:comparison}
Let $\Omega\subset \widehat \C$ be a $\tau$-cofat domain for some $\tau>0$. Then for each curve family $\Gamma$ in $\widehat \C$ we have
$$\min\{1,\Mod\Gamma\}\leq c(\tau) \Mod_{\Omega}\pi_\Omega(\Gamma).$$
\end{proposition}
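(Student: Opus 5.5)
We prove the inequality by turning an arbitrary admissible function for the transboundary modulus into one for the classical modulus, at a controlled cost. Fix $\Gamma$, and let $\rho$ be admissible for $\pi_\Omega(\Gamma)$ with respect to $\Omega$, with total mass
$$M=\int_\Omega\rho^2\,d\Sigma+\sum_{p\in\mathcal C(\Omega)}\rho(p)^2<\infty.$$
If $M\geq 1/c$ there is nothing to prove, since the left-hand side is at most $1$. So we may assume $M$ is small. The plan is to build a Borel function $\tilde\rho$ on $\widehat\C$ with $\int\tilde\rho^2\,d\Sigma\le c(\tau)M$ and $\int_\gamma\tilde\rho\,ds\ge 1/2$ for every $\gamma\in\Gamma$. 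Then $2\tilde\rho$ is admissible for $\Gamma$, which gives $\Mod\Gamma\le 4c(\tau)M$.

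\textbf{Construction of $\tilde\rho$.} On $\Omega$ we set $\tilde\rho=\rho$. Each $p\in\mathcal C(\Omega)$ must be smeared over a set of comparable area, and we treat components differently according to their size.

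\emph{Point components} have $\Sigma(p)=0$. For them we simply drop the value $\rho(p)$: since they are points, they contribute nothing to curve integrals anyway. We must still check that admissibility survives, and this is handled below.

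\emph{Non-degenerate components.} Let $p$ have spherical diameter $d_p$. By $\tau$-fatness, for every ball $B$ centered at a point of $p$ with radius $r\le d_p/2$, so that $B$ does not contain $p$, we have $\Sigma(B\cap p)\ge\tau r^2$. We define $\tilde\rho=\rho(p)/d_p$ on $p$. This gives
$$\int_p\tilde\rho^2\,d\Sigma\le\frac{\rho(p)^2\,\Sigma(p)}{d_p^2}\lesssim\rho(p)^2,$$
using the trivial bound $\Sigma(p)\lesssim d_p^2$. The plain choice $\rho(p)/d_p\cdot\chi_p$ fails when $p$ has empty interior. In that case we instead place the mass on a neighborhood: we use $\rho(p)/d_p\cdot\chi_{2B_p}$, where $B_p$ is a ball of radius comparable to $d_p$ around $p$. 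The resulting overlaps are controlled by a bounded-overlap argument in the style of Proposition \ref{prop:overlap}, which relies on fatness through a packing bound. Alternatively, following Schramm, one uses the convexity trick $\bigl(\sum a_p\chi_{B_p}\bigr)^2$ together with the fact that fat sets give $\int(\sum a_p\chi_{B_p})^2\lesssim\sum a_p^2\Sigma(B_p)$ via a maximal function estimate (Bojarski's lemma).

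\textbf{Admissibility.} Let $\gamma\in\Gamma$. Either the portion of $\gamma$ in $\Omega$ carries $\int\rho\ge 1/2$, or the components it meets satisfy $\sum_{p\cap|\gamma|\ne\emptyset}\rho(p)\ge 1/2$. In the second case we need $\int_\gamma\tilde\rho\,ds\gtrsim\rho(p)$ for each such $p$. This holds whenever $\gamma$ travels a distance comparable to $d_p$ inside $2B_p$, because the length there is at least of order $d_p$. It fails only for components $p$ with $d_p$ large compared with $\diam|\gamma|$, where $\gamma$ barely enters $2B_p$.

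This short-curve case is the \textbf{main obstacle}. There are two sub-cases. If $\diam|\gamma|$ is large (bounded below by a fixed constant), then only a bounded number of such components are involved. If $\gamma$ is tiny, the right tool is a local version of the fatness estimate near the point of entry. Failing that, we split $\Gamma$ into curves with diameter at least $\delta$ and the rest. For the first family we use $\rho(p)^2\le M$, which is small, so a single component cannot supply weight $1/2$ unless $M\ge 1/4$. The small-diameter family is handled by the neighborhood smearing at the scale of $\gamma$ rather than of $p$. This step is where the constant $c(\tau)$ enters; the $\min\{1,\cdot\}$ in the statement absorbs the cases where $\rho(p)$ is forced to be of order one.
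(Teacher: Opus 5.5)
\textbf{There is a genuine gap.} It sits exactly at the step you call the main obstacle. You never show that a curve $\gamma$ meeting components $p$ with $\diam p$ much larger than $\diam|\gamma|$ picks up enough $\tilde\rho$-length, and none of your proposed remedies works.
\begin{itemize}
\item ``Smearing at the scale of $\gamma$'' is not a construction. The density $\tilde\rho$ must be a single function independent of $\gamma$, and spreading $\rho(p)$ at all scales along $\partial p$ has infinite $L^2$ mass.
\item Splitting $\Gamma$ at a fixed diameter $\delta$ only bounds the number of large components met by a curve by a constant depending on $\delta$. The short curves are left untreated.
\item ``A single component cannot supply weight $1/2$'' ignores that a curve may meet several large components.
\end{itemize}

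\textbf{The missing idea.} Do not try to credit these components at all. Instead, show that their total contribution is negligible, using a scale-invariant count. Let $s=\diam|\gamma|$ and let $p_1,\dots,p_m$ be the components meeting $|\gamma|$ with $\diam p_i\ge 2s$. Choose $y_i\in p_i\cap|\gamma|$. The ball $B(y_i,s)$ cannot contain $p_i$, so $\tau$-fatness gives $\Sigma(B(y_i,s)\cap p_i)\ge \tau s^2$. These pairwise disjoint sets lie in a ball of radius $2s$, hence $m\le N(\tau)$ for every $s$. Since $\rho(p)\le\sqrt M$, these components contribute at most $N(\tau)\sqrt M\le 1/4$ once $M$ is small; this is precisely where $\min\{1,\cdot\}$ is used.

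The remaining credit, at least $3/4$, comes from $\rho|_\Omega$ and from components with $\diam p<2s$, and for these your estimate is correct. Take $B_p$ centered on $p$ with radius $\diam p$. Starting from a point of $p\cap|\gamma|$, the curve must leave a ball of radius $\diam p/4$ contained in $2B_p$, so it gains $\tilde\rho$-length at least $\rho(p)/4$.

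\textbf{Two further points need repair.}

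First, point components. You drop their weights and promise to check admissibility later, but never do. They cannot be smeared (they have diameter zero), and in the count above they fall in the ``small'' class. The fix is as follows. Only countably many point components have $\rho(p)>0$, because $\sum\rho(p)^2<\infty$. The family of nonconstant curves through a countable set has classical modulus zero, so these curves can be discarded from $\Gamma$.

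Second, your ``bounded overlap in the style of Proposition \ref{prop:overlap}'' option is false for cofat domains. The doubled balls $2B_p$ can have unbounded overlap. For example, take disjoint disks of radius comparable to $2^{-k}$ at distance comparable to $2^{-k}$ from a fixed point, $k\in\N$; all their dilates contain that point. Proposition \ref{prop:overlap} works only because there the balls lie in $\Omega$. Only your maximal-function (Bojarski) route is valid here. Apply it with the pairwise disjoint sets $p\cap\frac12B_p$, which by fatness have area at least a fixed multiple, depending on $\tau$, of the area of $2B_p$.

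Relatedly, $\chi_p$ alone fails not because of empty interior (non-degenerate fat sets have positive area). It fails because curves can touch or graze $p$ while spending little length inside it.
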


\subsection{Generalization of Carath\'eodory's theorem}

A version of the Cara\-th\'e\-odory kernel convergence theorem states that if a sequence of conformal maps $f_j\colon \D\to D_j$, $j\in \N$, converges locally uniformly to a conformal map $f\colon \D\to D$, then, for any Hausdorff limit $E$ of the sequence of compact sets $\{\widehat \C\setminus D_j\}_{j\in \N}$, the domain $D$ is a component of $\widehat \C\setminus E$. See \cite{KarafylliaNtalampekos:gromov_hyperbolic}*{Lemma 3.2} for a relevant statement.

Schramm \cite{Schramm:transboundary}*{Theorem 3.1} proved an innovative generalization of this theorem for a sequence of conformal maps $f_j\colon \Omega_j\to D_j$, $j\in \N$, between multiply connected cofat domains. The following result generalizes Schramm's theorem by replacing cofatness of $\Omega_j$ with a weaker condition, namely condition \ref{theorem:general_caratheodory:2} below. See Figure \ref{fig:exhaustion} for an illustration.

\begin{theorem}\label{theorem:general_caratheodory}
Let $\Omega\subset \widehat \C$ be a domain and let $q\in \mathcal C(\Omega)$ and $\tau>0$. Suppose that the following conditions hold. 
\begin{enumerate}[label=\normalfont(\arabic*)]
\item\label{theorem:general_caratheodory:1} There exists a sequence of domains $\{\Omega_j\}_{j\in \N}$ such that each compact subset of $\Omega$ is contained in $\Omega_j$ for all sufficiently large $j\in \N$ and for each $j\in \N$ there exists $q_j\in \mathcal C(\Omega_j)$ with $q\subset q_j$.
\item\label{theorem:general_caratheodory:2} There exists a Jordan region $U\supset q$ with  $\partial U\subset \Omega$ such that 
$$\lim_{\substack{z\to  q\\z\in \Omega}} \limsup_{j\to\infty} \Mod_{\Omega_j} \mathcal G^*(z,q_j; U)=0,$$
where $\mathcal G^*(z,q_j;U)$ is the family of curves in $\pi_{\Omega_j}(\bar U)$ with endpoints on $\partial U$ that separate $z$ and $q_j$ in $\pi_{\Omega_j}(U)$. 
\item\label{theorem:general_caratheodory:3} For each $j\in \N$, there exists a conformal map $f_j$ from $\Omega_j$ onto a $\tau$-cofat domain $D_j\subset \widehat \C$ such that $\{f_j\}_{j\in \N}$ converges locally uniformly in $\Omega$ to a conformal map $f$ from $\Omega$ onto a domain $D\subset \widehat \C$.
\end{enumerate}
Then, for any Hausdorff limit $E$ of the sequence $\{\hat f_j(q_j)\}_{j\in \N}$, the set $\hat f(q)$ is the complement of the component of $\widehat \C\setminus E$ that contains $D$. 
\end{theorem}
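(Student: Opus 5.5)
Let $E$ be a Hausdorff limit of $\{\hat f_j(q_j)\}$ along a subsequence, which we fix and relabel, and let $W$ be the component of $\widehat\C\setminus E$ containing $D$. Since $D$ is connected and, for every compact $K\subset\Omega$, $f_j(K)\subset D_j$ is disjoint from $\hat f_j(q_j)$ for large $j$ and converges uniformly to $f(K)$, we get $D\cap E=\emptyset$, so $W$ is well defined. The claim splits into two inclusions: (a) $\hat f(q)\subset \widehat\C\setminus W$, and (b) $\widehat\C\setminus W\subset \hat f(q)$. Here $\hat f(q)$ is the set of accumulation points of $f(z)$ as $z\to q$, $z\in\Omega$. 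I will normalize by a M\"obius map so that $\infty\in D$ is bounded away from everything relevant; cofatness is preserved quantitatively on compact parts.

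For (a), take $w\in\hat f(q)$ with $w=\lim f(z_n)$ and $z_n\to q$. I will use hypothesis \ref{theorem:general_caratheodory:2}. For $z$ close to $q$ and $j$ large, $\Mod_{\Omega_j}\mathcal G^*(z,q_j;U)$ is small. Conformal invariance transfers this to $\Mod_{D_j}$ of the image family in $\hat D_j$: curves separating $f_j(z)$ from $\hat f_j(q_j)$ inside $\hat f_j(\pi(U))$, with endpoints on $f_j(\partial U)$. Since $f_j(\partial U)\to f(\partial U)$ uniformly, and $D_j$ is $\tau$-cofat, Proposition \ref{proposition:comparison} turns small transboundary modulus into small classical modulus of the corresponding family in $\widehat\C$. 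A small classical modulus of separating curves forces $f_j(z)$ to be close to $\hat f_j(q_j)$: if the distance were bounded below by $\delta$ while $f_j(\partial U)$ stays at a definite distance, circles $\mathbb S(f_j(z),s)$ for $s$ in a range of radii would separate them, giving modulus $\gtrsim \log$ of a fixed ratio. Care is needed because the circles must be lifted via $\pi_{D_j}$ to curves in $\pi_{D_j}(\bar U')$ with endpoints on the boundary; I would use the subarc of each circle between $f_j(\partial U)$ crossings that separates. Hence $\dist(f_j(z_n),\hat f_j(q_j))\to0$ as $n\to\infty$ uniformly in large $j$, which gives $w\in E$, hence $w\notin W$. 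A connectedness argument gives more: the accumulation set $\hat f(q)$ is a continuum disjoint from $D$ and lies in $E$, and points of $\widehat\C\setminus(D\cup\hat f(q))$ near it are excluded.

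For (b), I show that every point $w\notin \hat f(q)$ lies in $W$. The set $\widehat\C\setminus\hat f(q)$ is connected, being the union of $D$ with the other complementary pieces $\hat f(p)$, $p\neq q$, which attach to $D$. So it suffices to show that $E\cap(\widehat\C\setminus\hat f(q))=\emptyset$, i.e.\ $E\subset\hat f(q)$. Suppose $y\in E\setminus \hat f(q)$ and pick $y_j\in\hat f_j(q_j)$ with $y_j\to y$. Choose $z$ near $q$ so that the modulus in \ref{theorem:general_caratheodory:2} is below $\epsilon$, and choose a small Jordan curve $\beta\subset\Omega\cap U$ around $q$ through points near $z$, so that $f(\beta)$ is far from $y$ relative to $\dist(y,\hat f(q))$. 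Using $\hat f_j(q_j)$ as a continuum (it is connected) joining $y_j$ to points near $f_j(z)$ by part (a), the annular region between $f_j(\beta)$ and a small circle around $y$ yields a large family of curves separating $f_j(z)$ from $\hat f_j(q_j)$? That is not quite right. Instead I would argue via cofatness: $\hat f_j(q_j)$ is $\tau$-fat, so near $y$ it has area comparable to $r^2$ at all scales up to $\dist(y,f(\partial U))$. The curves in $\mathcal G^*$ pulled back from circles around $f_j(z)$ with radii between $|f_j(z)-\hat f_j(q_j)|$ and $\dist(f_j(z),f_j(\partial U))$ cross $\hat f_j(q_j)$ and must also separate. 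This is the \textbf{main obstacle}: converting ``$E$ sticks out beyond $\hat f(q)$'' into a lower bound on $\Mod_{\Omega_j}\mathcal G^*(z,q_j;U)$. My plan is to compare $\Mod_{D_j}$ of the separating family to the modulus of curves joining $\hat f_j(q_j)$ to $f_j(\partial U)$ through $f_j(z)$'s side. By the duality between separating and connecting families, with the product of moduli bounded below for transboundary modulus in cofat domains à la Schramm, small $\Mod\mathcal G^*$ forces a large connecting modulus. That in turn forces the protruding part of $\hat f_j(q_j)$ near $y$ to be connected to $f(z)$ through regions of the limit domain. This contradicts $y\notin\hat f(q)$ together with the locally uniform convergence on a compact annular neighborhood in $\Omega$ of $q$.
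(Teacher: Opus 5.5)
Your split into the two inclusions matches the paper's, but the difficulty is misplaced and both halves have gaps. In (b) you never prove $E\subset\hat f(q)$. The route through fatness of $\hat f_j(q_j)$ and a ``duality'' lower bound on products of transboundary moduli is not an available tool, and no modulus is needed here at all. Given $p\in\mathcal C(\Omega)\setminus\{q\}$, take a Jordan curve $S\subset\Omega$ separating $p$ from $q$ (Zoretti) and a point $z\in\Omega$ on the side of $p$. For large $j$ we have $S\subset\Omega_j$ and $q_j$ lies on the side of $q$, so $f_j(S)$ separates $f_j(z)$ from $\hat f_j(q_j)$. Since $f_j(S)\to f(S)$ uniformly and $f_j(z)\to f(z)$, $E$ misses the component of $\widehat\C\setminus f(S)$ containing $f(z)$, and that component contains $\hat f(p)$. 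The same argument, with $S$ separating an arbitrary $z\in\Omega$ from $q$, is also what makes $D\cap E=\emptyset$ rigorous. Disjointness of $f_j(K)$ from $\hat f_j(q_j)$ plus uniform convergence does not by itself keep $E$ off $f(K)$. This gives $E\subset\hat f(q)$, and your connectedness argument then yields $\widehat\C\setminus W\subset\hat f(q)$. Note that $\hat f(q)$ must denote the whole complementary component of $D$. The accumulation set of $f(z)$ as $z\to q$ is only its boundary, and with that reading (b) is false whenever $\hat f(q)$ has interior.

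Inclusion (a) is where hypothesis \ref{theorem:general_caratheodory:2} and cofatness are really used, and your lower bound for the separating modulus does not work as stated. Subarcs of circles centered at $f_j(z)$ give curves of $\hat f_j(\mathcal G^*(z,q_j;U))$ only if the circles reach the outside of $f_j(\partial U)$ before meeting $\hat f_j(q_j)$. That is roughly the condition $\dist(w,f(\partial U))<\dist(w,E)$, the opposite of ``$f_j(\partial U)$ stays at a definite distance''. It fails in exactly the configuration to be excluded, e.g.\ a short slit of $\hat f(q)$ protruding from $E$ into $W$ with $f(\partial U)$ far away; so $\dist(w,E)\ge\delta$ alone gives nothing. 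The missing idea is that $w$ and $f(\partial U)$ lie in the same component $W$ of $\widehat\C\setminus E$. If $w\in\partial\hat f(q)\setminus E$, then $w\in W$, because $w\in\bar D$ and $\partial W\subset E$. So you can choose disjoint Jordan curves $J_1,J_2\subset W$ that separate $w$ from $E$ and meet the component of $\widehat\C\setminus f(\partial U)$ not containing $E$. Let $A$ be the annulus between $J_1$ and $J_2$. For all large $n$ and $j$, every curve in $\Gamma^*(A)$ contains a crosscut of the Jordan region bounded by $f_j(\partial U)$ that separates $f_j(z_n)$ from $\hat f_j(q_j)$. Conformal invariance and Proposition~\ref{proposition:comparison} then give $\Mod_{\Omega_j}\mathcal G^*(z_n,q_j;U)\ge c(\tau)^{-1}\min\{1,\Mod\Gamma^*(A)\}>0$, contradicting hypothesis \ref{theorem:general_caratheodory:2}. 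Hence $\partial\hat f(q)\subset E$, and therefore $\hat f(q)\cap W=\emptyset$.
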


An annulus $A\subset \widehat \C$ is an open set with exactly two complementary components. For an annulus $A\subset\widehat \C$ we denote by $\Gamma(A)$ the family of curves $\gamma\colon [a,b]\to \bar A$ with endpoints in distinct boundary components of $A$ and with $\gamma((a,b))\subset A$. We also denote by $\Gamma^*(A)$ the family of curves in $A$ separating the boundary components of $A$.

\begin{proof}
By passing to a subsequence, suppose that $\{\hat f_j(q_j)\}_{j\in \N}$ converges to a connected compact set $E$ in the Hausdorff sense. We prove that $D\cap E=\emptyset$. Let $z\in \Omega$ and $S\subset \Omega$ be a Jordan curve that separates $z$ and $q$; the existence of $S$ can be justified with the aid of Zoretti's theorem \cite{Whyburn:topology}*{Corollary VI.3.11}. By assumption \ref{theorem:general_caratheodory:1}, for all sufficiently large $j\in \N$ we have $S\subset \Omega_j$ and $q\subset q_j$, so the set $q_j$ lies in the component of $\widehat \C\setminus S$ that contains $q$. Thus, the Jordan curve $f_j(S)$ separates $\hat f_j(q_j)$ and $f_j(z)$. These three sequences of sets converge in the Hausdorff sense as $j\to\infty$ to $f(S)$, $E$, and $f(z)$, respectively. Therefore, the point $f(z)$ lies in a component of $\widehat \C\setminus f(S)$ that is disjoint from $E$. Since $z\in \Omega$ is arbitrary, we conclude that $D\cap E=\emptyset$. We denote by $V$ the component of $\widehat \C\setminus E$ that contains $D$. See Figure \ref{fig:general_caratheodory:V} for an illustration.

\begin{figure}
	\centering
	\begin{tikzpicture}
		\draw[fill=black!20, rounded corners] (5:1) arc (5:355:1) -- (355:1.5) arc (355:5:1.5)--cycle; 
		\node at (0,-1.22) {$\hat f_j(q_j)$};
		\node at (1.8,0) {$D_j$};
		\draw[fill=black!20] (6,0) circle (1.5cm);
		\draw[fill=white] (6,0) circle (1cm);
		\node at (6,-1.25) {$E$};
		\node at (7.8,0) {$V$};
		\draw[shift={(6,0)}] (-50:1.5)--(-50:2)node[circle, fill=black, inner sep=1pt] {} node[right] {$w_0$};
		\fill (7.2,-1.8) circle (1.5pt) node[below] {$w_n$};
		\fill (1.5,-1.6) circle (1.5pt) node[right] {$f_j(z_n)$};
		
		\begin{scope}[shift={(0,-4)}]
		\draw[line width=0.54cm, line cap=round](-1,0)--(1,0);
		\draw[line width=0.5cm, line cap=round, color=black!20](-1,0)--(1,0);
		\node at (0,0) {$q_j$};
		
		\draw[line width=2pt, line cap=round] (5,0)--(7,0)  node[below,pos=.5]{$q$};
		\fill (6.5,0.5) circle (1.5pt) node[right]{$z_n$};
		\fill (0.5,0.5) circle (1.5pt) node[right]{$z_n$};
		\end{scope}
		
		\draw[->] (0,-3)--(0,-2) node[left,pos=.5]{$f_j$};
		\draw[->] (6,-3)--(6,-2) node[left,pos=.5]{$f$};
	\end{tikzpicture}
	\caption{The set $E$ that is the Hausdorff limit of $\{\hat f(q_j)\}_{j\in \N}$ is a closed annulus in this example. The set $V$ is the complementary component of this annulus that contains $\infty$ and $Q=\widehat \C\setminus V$ is a closed disk. Finally, $\hat f(q)$ is the union $Q$ with a line segment in $V$.}\label{fig:general_caratheodory:V}
\end{figure}

Let $p\in \mathcal C(\Omega)\setminus \{q\}$. We show that $\hat f(p)\subset V$. Consider a Jordan curve $S\subset \Omega$ that separates $p$ and $q$. Let $z\in \Omega$ be a point in the component of $\widehat \C\setminus S$ that contains $p$. By the above argument, $f(z)$ lies in a component of $\widehat \C\setminus f(S)$ that is disjoint from $E$. By the choice of $z$, the set $\hat f(p)$ lies in the same component of $\widehat \C\setminus f(S)$ as $f(z)$, so $\hat f(p)\cap E=\emptyset$. Thus, $\hat f(p)$ lies in a component of $\widehat \C\setminus E$.  Since $\partial \hat f(p)\subset \bar D$, we see that $\hat f(p)\subset V$, as claimed. Summarizing, we have proved that
$$\widehat \C\setminus \hat f(q)=D\cup \bigcup_{p\in \mathcal C(\Omega)\setminus \{q\}} \hat f(p) \subset V.$$
Equivalently, if we set $Q=\widehat \C\setminus V$, then $Q\subset \hat f(q)$. Our goal is to show the reverse inclusion, namely $\hat f(q)\subset Q$.  

We suppose, instead, that $\hat f(q)\setminus Q\neq \emptyset$, so $\hat f(q)\cap V\neq \emptyset$, as in Figure \ref{fig:general_caratheodory:V}. Since $V$ is a connected open set, there exists a path in $V$ connecting a point of $\hat f(q)$ to a point of $D$. Thus, $\partial \hat f(q)\cap V\neq \emptyset$, so there exists a point $w_0\in \partial \hat f(q)\setminus Q$. Consider a sequence $w_n\in D$, $n\in \N$, converging to $w_0$. Let $z_n=f^{-1}(w_n)\in \Omega$, $n\in \N$, and note that $z_n$ accumulates at $q$ as $n\to\infty$. For fixed $n\in \N$ we have $f_j(z_n)\to w_n$ as $j\to\infty$.  

We claim that there exist $C_0>0$ and $N\in \N$ such that for each $n\geq N$ we have
\begin{align}\label{theorem:general_caratheodory:loewner}
\Mod_{D_j}\hat f_j(\mathcal G^*(z_n, q_j;U))\geq C_0\,\,\, \text{for all sufficiently large $j\in \N$.} 
\end{align}
Assuming this, by the conformal invariance of transboundary modulus, we have
$$\Mod_{\Omega_j}\mathcal G^*(z_n,q_j;U)\geq C_0$$
for each $n\geq N$ and for all sufficiently large $j\in \N$. Taking the $\limsup$ as $j\to\infty$ and then the limit as $n\to\infty$ we obtain a contradiction to assumption \ref{theorem:general_caratheodory:2}.

We now prove \eqref{theorem:general_caratheodory:loewner}. Consider two disjoint Jordan curves $J_1,J_2$ in the simply connected domain $V$ that separate $w_0$ from $E$ and intersect the component of $\widehat \C\setminus f(\partial U)$ that is disjoint from $E$. Since $w_n\to w_0$ as $n\to\infty$, we see that for all sufficiently large $n\in \N$, say for $n\geq N$, the curves $J_1,J_2$ separate $w_n$ from $E$. We fix $n\geq N$. Since $f_j(z_n)\to w_n$, $\hat f_j(q_j)\to E$, and $f_j(\partial U)\to f(\partial U)$ as $j\to\infty$, we conclude that for all sufficiently large $j\in \N$, say for $j\geq j(n)$, the curves $J_1,J_2$ separate $f_j(z_n)$ from $\hat f_j(q_j)$ and intersect the component of $\widehat \C\setminus f_j(\partial U)$ that is disjoint from $\hat f_j(q_j)$. See Figure \ref{fig:annulus_separating} for an illustration.

\begin{figure}
	\centering
	\begin{tikzpicture}
		\draw[fill=black!20, rounded corners] (5:1) arc (5:355:1) -- (355:1.5) arc (355:5:1.5)--cycle; 
		\node at (0,-1.22) {$\hat f_j(q_j)$};
		\draw[fill=black!20] (2.8,-2) circle (1.8cm);
		\draw[fill=white] (2.5,-1.8) circle (1.2cm);
		\draw[dotted] (0.5,-0.5) circle (2.5cm);
		\fill (1.5,-1.6) circle (1.5pt) node[right] {$f_j(z_n)$};
		\node at (3.4,-2) {$J_1$};
		\node at (4.9,-2) {$J_2$};
		\node at (4.1,-2) {$A$};
		\node at (2.5,1.8) {$f_j(\partial U)$};
		\begin{scope}
		\clip (0.5,-0.5) circle (2.5cm);
		\draw[thick,red] (2.6,-1.9) circle (1.45cm);
		\end{scope}
		\node at (3.2,-0.5) {$\gamma'$};
	\end{tikzpicture}
	\caption{The curves $J_1,J_2$ separate $f_j(z_n)$ from $\hat f_j(q_j)$ and intersect the exterior of $f_j(\partial U)$. Shown is a curve $\gamma'\in \Gamma(j,n)$.} \label{fig:annulus_separating}
\end{figure}

Consider the annulus $A$ bounded by $J_1$ and $J_2$.  Each $\gamma\in \Gamma^*(A)$ separates $f_j(z_n)$ from $\hat f_j(q_j)$ and intersects the component of $\widehat \C\setminus f_j(\partial U)$ that is disjoint from $\hat f_j(q_j)$. Thus, $\gamma$ has a subcurve $\gamma'$ with endpoints on $f_j(\partial U)$ that separates $f_j(z_n)$ from $\hat f_j(q_j)$ in the Jordan region bounded by $f_j(\partial U)$ and which contains $\hat f_j(q_j)$. Let $\Gamma(j,n)$ be the family of these subcurves. Then 
$$\Mod\Gamma(j,n)\geq \Mod \Gamma^*(A)>0.$$
By assumption, the domain $D_j$ is $\tau$-cofat for each $j\in \N$. By the comparison between classical and transboundary modulus from Proposition \ref{proposition:comparison}, we have
$$\Mod_{D_j} \pi_{D_j}(\Gamma(j,n)) \geq c(\tau)^{-1} \min\{1, \Mod\Gamma^*(A)\}\eqqcolon C_0$$
for all $j\geq j(n)$ and $n\geq N$. Finally, note that $\pi_{D_j}(\Gamma(j,n))\subset \hat f_j(\mathcal G^*(z_n,q_j;U))$. This completes the proof of \eqref{theorem:general_caratheodory:loewner}.
\end{proof}

We will apply this theorem in the special case that $D_j$ is a circle domain for each $j\in \N$. The convergence of $\{f_j\}_{j\in \N}$ to $f$ in condition \ref{theorem:general_caratheodory:3} can be readily obtained by choosing appropriate normalizations and passing to a subsequence. Instead, establishing condition \ref{theorem:general_caratheodory:2} is considerably more technical. We provide sufficient conditions for \ref{theorem:general_caratheodory:2} to be true in the next section. 

We include an elementary lemma for future reference.

\begin{lemma}\label{lemma:domains_converge}
Let $\Omega\subset \widehat \C$ be a domain. Suppose that the following conditions hold.
\begin{enumerate}[label=\normalfont(\arabic*)]
	\item\label{lemma:domains_converge:1} There exists a sequence of domains $\{\Omega_j\}_{j\in \N}$ such that each compact subset of $\Omega$ is contained in $\Omega_j$ for all sufficiently large $j\in \N$.
	\item\label{lemma:domains_converge:2} For each $j\in \N$, there exists a conformal map $f_j$ from $\Omega_j$ onto a domain $D_j\subset \widehat \C$ such that $\{f_j\}_{j\in \N}$ converges locally uniformly in $\Omega$ to a conformal map $f$ from $\Omega$ onto a domain $D\subset \widehat \C$.
\end{enumerate}
Then
\begin{enumerate}[label=\normalfont(\arabic*)]\setcounter{enumi}{2}
	\item\label{lemma:domains_converge:3} each Hausdorff limit of a sequence $p_j\in \mathcal C(\Omega_j)$, $j\in \N$, is contained in some $p\in \mathcal C(\Omega)$ and
	\item\label{lemma:domains_converge:4} conditions \ref{lemma:domains_converge:1} and \ref{lemma:domains_converge:3} hold for the domain $D$ and the sequence $\{D_j\}_{j\in \N}$. 	
\end{enumerate}
\end{lemma}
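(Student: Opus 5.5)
The statement has two parts, \ref{lemma:domains_converge:3} and \ref{lemma:domains_converge:4}, and I would prove \ref{lemma:domains_converge:3} first, then obtain \ref{lemma:domains_converge:4} by applying the same separation argument on the image side.

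\emph{Proof of \ref{lemma:domains_converge:3}.} Let $p_j\in\mathcal C(\Omega_j)$ converge in the Hausdorff sense to a compact set $K$. Then $K$ is connected, as a Hausdorff limit of continua. I first show $K\cap\Omega=\emptyset$. For $z\in\Omega$, pick a small closed disk $\bar B\subset\Omega$ around $z$. By \ref{lemma:domains_converge:1}, $\bar B\subset\Omega_j$ for all large $j$, so $p_j\cap \bar B=\emptyset$, and hence $z\notin K$. Thus $K\subset\widehat\C\setminus\Omega$. Being connected, $K$ lies in a single component $p\in\mathcal C(\Omega)$.

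\emph{Image domains: exhaustion.} For \ref{lemma:domains_converge:4}, I first verify condition \ref{lemma:domains_converge:1} for $D$ and $\{D_j\}$. Let $K'\subset D$ be compact and set $K=f^{-1}(K')$, a compact subset of $\Omega$. Choose a compact set $K_1\subset\Omega$ with $K\subset\operatorname{int}K_1$ and $\partial K_1$ a finite union of Jordan curves; for instance, take $K_1$ as a union of finitely many closed disks in $\Omega$ covering $K$, smoothed as needed. For large $j$ we have $K_1\subset\Omega_j$, and $f_j\to f$ uniformly on $K_1$. Then $f_j(\partial K_1)$ is close to $f(\partial K_1)$, which stays at positive distance from $K'$. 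A degree or argument-principle argument, i.e.\ Hurwitz/Rouch\'e applied to $f_j-w$ for $w\in K'$, shows that each $w\in K'$ is attained by $f_j$ in $\operatorname{int}K_1$. Hence $K'\subset f_j(K_1)\subset D_j$ for large $j$. If $\infty$ is involved, I would handle it by postcomposing with a M\"obius map and using spherical uniform convergence.

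\emph{Image domains: Hausdorff limits.} Next I verify condition \ref{lemma:domains_converge:3} for $D$ and $\{D_j\}$. Let $p_j'\in\mathcal C(D_j)$ converge to $K'$. By the previous step and the same argument as above, $K'\cap D=\emptyset$, and $K'$ is connected. It therefore lies in one component of $\widehat\C\setminus D$.

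The main obstacle is the Rouch\'e/degree step that shows compact subsets of $D$ are eventually contained in $D_j$. The key point is that uniform convergence near a neighborhood of $f^{-1}(K')$, together with injectivity, forces the images to cover $K'$. I would argue with winding numbers of $f_j$ along the boundary curves of $K_1$ around each $w\in K'$, which converge to those of $f$ and are equal to $1$ for the appropriate curve.
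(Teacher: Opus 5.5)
Your proof is correct and follows essentially the same route as the paper. Part (3) comes from the fact that a small closed disk around any point of $\Omega$ eventually avoids $p_j$. Part (4) is reduced to the exhaustion property for $\{D_j\}$ via the argument principle, after which (3) for the images follows verbatim.

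The only difference is one of packaging. The paper applies the argument principle locally, on a small closed disk $\bar B\subset\Omega$ around each $z_0$, and then covers $K'$ by finitely many of the resulting neighborhoods. This avoids constructing a global $K_1$ with Jordan boundary and lets $\infty$ be handled in a local chart rather than by a global M\"obius change.
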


\begin{proof}
Let $E$ be the Hausdorff limit of a sequence $p_{j_k}\in \mathcal C(\Omega_{j_k})$, $k\in \N$. If there exists a point $z_0\in E\cap \Omega$, then there exists a compact neighborhood $U$ of $z_0$ that lies in $\Omega$ and hence in $\Omega_j$ for all sufficiently large $j\in \N$ by assumption \ref{lemma:domains_converge:1}. However, $U\cap p_{j_k}\neq \emptyset$ for all sufficiently large $k\in \N$, a contradiction. Thus, $E\subset \widehat \C\setminus \Omega$. Since $E$ is connected, we have $E\subset p$ for some $p\in \mathcal C(\Omega)$.

For \ref{lemma:domains_converge:4} it suffices to verify \ref{lemma:domains_converge:1} for $D$ and $\{D_j\}_{j\in \N}$. Let $z_0\in \Omega$ and  $w_0=f(z_0)\in D$. Consider a ball $B \subset \bar B\subset \Omega$ centered at $z_0$, so $\bar B\subset \Omega_j$ for all sufficiently large $j\in \N$. By the argument principle and the uniform convergence of $\{f_j|_{\bar B}\}_{j\in \N}$ to $f|_{\bar B}$, there exists a neighborhood $U$ of $w_0$ that is contained in $f_j(\bar B)$ for all sufficiently large $j\in \N$. Thus, $U\subset D_j$ for all sufficiently large $j\in \N$. Now, given a compact set $K\subset D$, we can cover it by finitely many such neighborhoods, so we have $K\subset D_j$ for all sufficiently large $j\in \N$, as desired. 
\end{proof}

\subsection{Potential theoretic conditions for uniformization}
In this section we use the spherical metric. For $z_0\in \widehat \C$ and $r>0$ we denote by $B(z_0,r)$ the spherical ball $\{z\in \widehat \C: \sigma(z,z_0)<r\}$. For $0<r<R<\pi$ we let $S(z_0,r)$ be the spherical circle $\{z\in \widehat{\C}: \sigma(z,z_0)=r\}$ and $A(z_0;r,R)$ be the annulus bounded by $S(z_0,r)$ and $S(z_0,R)$.

For sets $E,F,G$ in a topological space $X$ we denote by $\Gamma(E,F;G)$ the family of curves $\gamma\colon [a,b]\to  G$ such that $\gamma(a)\in E$ and $\gamma(b)\in F$. Also, we denote by $\Gamma^*(E,F;G)$ the family of curves $\gamma\colon [a,b]\to G$ that separate $E$ and $F$, that is, $E$ and $F$ lie in distinct components of $X\setminus \gamma([a,b])$.

\begin{lemma}\label{lemma:potential}
Suppose that $\Omega,q$ and $\Omega_j,q_j$, $j\in \N$, are as in Theorem \ref{theorem:general_caratheodory} \ref{theorem:general_caratheodory:1} and that for each $z_0\in \partial q$ and $R\in (0,\pi)$ we have 
$$ \lim_{r\to 0^+} \limsup_{j\to\infty}\Mod_{\Omega_j} \Gamma(\pi_{\Omega_j}(S(z_0,r)), \pi_{\Omega_j}(S(z_0,R)); \hat\Omega_j\setminus \{q_j\}) =0.$$
Then condition \ref{theorem:general_caratheodory:2} of Theorem \ref{theorem:general_caratheodory} is satisfied.
\end{lemma}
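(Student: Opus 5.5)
We take $U$ to be an arbitrary Jordan region with $q\subset U$ and $\partial U\subset \Omega$. Such a region exists by Zoretti's theorem, as in the proof of Theorem \ref{theorem:general_caratheodory}. Fix $R\in (0,\pi)$ with $R<\dist_\sigma(q,\partial U)$. The idea is to show that every curve in $\mathcal G^*(z,q_j;U)$, for $z$ close to $q$, has a subcurve lying in one of finitely many annular families $\Gamma(\pi_{\Omega_j}(S(z_i,r_i)), \pi_{\Omega_j}(S(z_i,R)); \hat\Omega_j\setminus\{q_j\})$. The hypothesis makes the moduli of these families small. We then conclude by monotonicity and subadditivity of transboundary modulus.

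\emph{Step 1 (covering).} Fix $\varepsilon>0$. For each $z_0\in\partial q$, the hypothesis gives $r(z_0)\in(0,R)$ such that
$$\limsup_{j\to\infty}\Mod_{\Omega_j}\Gamma(\pi_{\Omega_j}(S(z_0,r(z_0))),\pi_{\Omega_j}(S(z_0,R));\hat\Omega_j\setminus\{q_j\})<\varepsilon.$$
Since $\partial q$ is compact, finitely many balls $B(z_i,r_i/2)$, $i=1,\dots,m$, with $r_i=r(z_i)$, cover $\partial q$. Let $\delta=\min_i r_i/2$.

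\emph{Step 2 (topological reduction).} Let $z\in\Omega$ with $\dist_\sigma(z,q)<\delta$. A nearest point $w\in q$ to $z$ lies in $\partial q$. Hence $w\in B(z_i,r_i/2)$ for some $i$, and $z$ and the geodesic segment $[z,w]$ lie in $B(z_i,r_i)$. Let $\gamma\in \mathcal G^*(z,q_j;U)$ with $j$ large, so that $\Omega_j$ contains the relevant compact pieces and $q\subset q_j$.
\begin{itemize}
\item The set $\pi_{\Omega_j}([z,w])$ is connected, contains $z$, and contains $q_j$, because $w\in q\subset q_j$. Since $\gamma$ separates $z$ from $q_j$, $\gamma$ must meet $\pi_{\Omega_j}([z,w])$ at a point other than $q_j$. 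Such a point comes from a point of $\Omega_j\cap B(z_i,r_i)$ or from a component of $\widehat\C\setminus\Omega_j$ meeting $B(z_i,r_i)$.
\item Also $\gamma\notni q_j$, and $\gamma$ has an endpoint on $\pi_{\Omega_j}(\partial U)$, which lies outside $\pi_{\Omega_j}(\bar B(z_i,R))$.
\end{itemize}
Consider the preimage under $\pi_{\Omega_j}$ and use that $\pi_{\Omega_j}$ is monotone, so preimages of continua are continua. It follows that $\gamma$ contains a subcurve joining $\pi_{\Omega_j}(S(z_i,r_i))$ to $\pi_{\Omega_j}(S(z_i,R))$ inside $\hat\Omega_j\setminus\{q_j\}$. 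A point of $\hat\Omega_j$ represented by a component meeting a circle is counted as lying in its image.

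This step is the main obstacle. One must handle carefully the points of $\gamma$ that are complementary components straddling the circles. I would argue on the continuum $\pi_{\Omega_j}^{-1}(|\gamma|)$, which meets both $\bar B(z_i,r_i)$ and $\widehat\C\setminus B(z_i,R)$, and therefore meets both circles. Parametrizing $\gamma$, I would take the last time it lies in $\pi_{\Omega_j}(\bar B(z_i,r_i))$ and the first subsequent time it lies in $\pi_{\Omega_j}(\widehat\C\setminus B(z_i,R))$.

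\emph{Step 3 (conclusion).} By Step 2, each curve of $\mathcal G^*(z,q_j;U)$ has a subcurve in one of the $m$ annular families. A subcurve has smaller transboundary integral and meets fewer components, so admissibility passes from subcurves to curves. By monotonicity and subadditivity of $\Mod_{\Omega_j}$,
$$\limsup_{j\to\infty}\Mod_{\Omega_j}\mathcal G^*(z,q_j;U)\le \sum_{i}\limsup_{j\to\infty}\Mod_{\Omega_j}\Gamma_i^j .$$
In fact only the single index $i$ chosen for $z$ is needed. Indeed, the family $\mathcal G^*(z,q_j;U)$ minorizes the one family $\Gamma_i^j$ for that $i$, so the bound is at most $\varepsilon$ for every $z$ with $\dist_\sigma(z,q)<\delta$. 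Since $\varepsilon>0$ was arbitrary, condition \ref{theorem:general_caratheodory:2} of Theorem \ref{theorem:general_caratheodory} follows.
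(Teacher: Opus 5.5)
Your proposal is correct and follows essentially the same route as the paper: every curve of $\mathcal G^*(z,q_j;U)$ has a subcurve in the annular family between $S(z_i,r_i)$ and $S(z_i,R)$ around a point of $\partial q$ near $z$, and monotonicity of transboundary modulus finishes the argument. Your finite cover of $\partial q$ makes uniform in $z$ what the paper does along sequences $z_n\to z_0\in\partial q$; the step you flag as the main obstacle is immediate, since $\Gamma(\cdot,\cdot;\hat\Omega_j\setminus\{q_j\})$ only requires the curve to avoid $q_j$, so once $\pi_{\Omega_j}^{-1}(|\gamma|)$ meets both circles you can restrict $\gamma$ between a time it hits $\pi_{\Omega_j}(S(z_i,r_i))$ and a time it hits $\pi_{\Omega_j}(S(z_i,R))$.
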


\begin{proof}
Consider a Jordan region $U\supset q$ with $\partial U\subset \Omega$. Also, let $z_n\in \Omega$, $n\in \N$, such that $z_n\to z_0$ for some $z_0\in \partial q$. Let  $R>0$ such that $\bar B(z_0,R)\subset U$ and let $r\in (0,R)$. Let $n_0\in \N$ such that $z_n\in B(z_0,r)$ for $n\geq n_0$. We fix $n\geq n_0$. For sufficiently large $j\in \N$, say for $j\geq j(n)$, we have $z_n\in \Omega_j$, so $z_n\notin q_j$. We fix $j\geq j(n)$.

We claim that each curve in $\mathcal G^*(z_n,q_j;U)$ has a subcurve in 
$$\Gamma(r,j)=\Gamma(\pi_{\Omega_j}(S(z_0,r)), \pi_{\Omega_j}(S(z_0,R)); \hat\Omega_j\setminus \{q_j\}).$$
Let $\gamma\in \mathcal G^*(z_n,q_j;U)$. Then $\pi_{\Omega_j}^{-1}(|\gamma|)$ is a continuum that intersects $\partial U$, so it is not contained in $B(z_0,R)$. Also, $\pi_{\Omega_{j}}^{-1}(|\gamma|)$ intersects $B(z_0,r)$, because $\gamma$ separates $z_n$ and $q_j$ in $\pi_{\Omega_j}(\bar U)$. Thus, $\pi_{\Omega_j}^{-1}(|\gamma|)$ intersects both $S(z_0,r)$ and $S(z_0,R)$. Equivalently, $|\gamma|$ intersects both $\pi_{\Omega_j}(S(z_0,r))$ and $\pi_{\Omega_j}(S(z_0,R))$. The claim follows. Therefore, for all $j\geq j(n)$ we have
$$\Mod_{\Omega_j}\mathcal G^*(z_n,q_j;U) \leq \Mod_{\Omega_j}\Gamma(r,j)$$
This implies that 
$$\limsup_{j\to\infty }\Mod_{\Omega_j} \mathcal G^*(z_n,q_j;U) \leq \limsup_{j\to\infty}\Mod_{\Omega_j}\Gamma(r,j)$$
for all $n\geq n_0$. By assumption, the right-hand side converges to $0$ as $r\to 0^+$. Hence, letting $n\to \infty$ and then $r\to 0^+$ shows that
$$\lim_{\substack{z\to  q\\z\in \Omega}} \limsup_{j\to\infty} \Mod_{\Omega_j} \mathcal G^*(z,q_j; U)=0.$$
Thus, condition \ref{theorem:general_caratheodory:2} of Theorem \ref{theorem:general_caratheodory} holds. 
\end{proof}

\begin{theorem}\label{theorem:koebe_nondegenerate}
In the setting of Theorem \ref{theorem:general_caratheodory}, if the condition of Lemma \ref{lemma:potential} is satisfied for $q$, then $q$ is a singleton if and only if $\hat f(q)$ is a singleton.
\end{theorem}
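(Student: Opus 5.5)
We prove the two implications separately, by comparing transboundary moduli of annular curve families on the two sides of the limit. The tools are the conformal invariance of $\Mod$ under $\hat f_j$, the $\tau$-cofatness of the $D_j$, and the conclusion of Theorem \ref{theorem:general_caratheodory}. The hypothesis of Lemma \ref{lemma:potential} gives that condition \ref{theorem:general_caratheodory:2} holds. Hence, if $E$ is any Hausdorff limit of $\hat f_j(q_j)$ (after passing to a subsequence), then $\hat f(q)=\widehat\C\setminus V$, where $V$ is the component of $\widehat\C\setminus E$ containing $D$.

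\emph{Suppose $q=\{z_0\}$ is a singleton; we show $\hat f(q)$ is a singleton.} Assume instead that $\diam \hat f(q)=\delta>0$. Then $E$ has diameter at least $\delta$, since $\partial \hat f(q)\subset \partial V\subset E$, and so $\diam \hat f_j(q_j)\geq \delta/2$ for large $j$. Fix $R$, and consider a curve $\gamma\in\Gamma(S(z_0,r),S(z_0,R);\hat\Omega_j\setminus\{q_j\})$. Its image under $\hat f_j$ is a curve in $\hat D_j$ that avoids $\hat f_j(q_j)$ and joins $f_j(\pi^{-1}S(z_0,r))$ to $f_j(\pi^{-1}S(z_0,R))$. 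The set $S(z_0,R)$ lies in $\Omega$ for suitable $R$ (or we replace it by a Jordan curve in $\Omega$ near it), so $f_j(S(z_0,R))$ converges to a fixed curve at positive distance from $\hat f(q)$. The inner circle $S(z_0,r)$ surrounds $q$, and its image is close to $\hat f(q)$. The main step is to produce a lower bound on $\Mod_{D_j}$ of this image family. I would take a family of disjoint Jordan curves in $\Omega\cap A(z_0;r,R)$ that surround $q$; each image curve $f_j(\cdot)$ surrounds $\hat f_j(q_j)$, which is a circle of diameter at least $\delta/2$, and stays within a fixed distance of it. Classical modulus lower bounds (the Loewner property of the sphere, as used in Theorem \ref{theorem:general_caratheodory}) then apply to curves connecting a continuum of diameter comparable to $\delta$ to $f_j(\partial U)$. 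Via Proposition \ref{proposition:comparison}, this gives a transboundary lower bound $C_0>0$, independent of $r$ and $j$. By conformal invariance, $\Mod_{\Omega_j}\Gamma(r,j)\geq C_0$, which contradicts the hypothesis of Lemma \ref{lemma:potential} as $r\to0$.

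The obstacle is that the image curves must avoid $\hat f_j(q_j)$ while still starting near it. I would handle this by taking connecting curves in $D_j$ from points at small distance from the circle $\hat f_j(q_j)$ out to the outer curve, in the region outside that circle. Since the complement of a disk is Loewner-like, the modulus of such curves stays bounded below; this reduces to a classical modulus estimate for curves joining two continua of diameters bounded below at bounded distance. The inner endpoints lie in the image of $\pi^{-1}S(z_0,r)$, and that image separates $\hat f_j(q_j)$ from the outer curve. Therefore every curve from the outer curve to near $\hat f_j(q_j)$ contains a subcurve of the required type.

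\emph{Suppose $\hat f(q)$ is a singleton; we show $q$ is a singleton.} Here we argue symmetrically, using Lemma \ref{lemma:domains_converge} \ref{lemma:domains_converge:4}, which transfers the setting to $D$ and $D_j$. If $q$ were non-degenerate while $\hat f(q)=\{w_0\}$, the same argument with the roles reversed would require cofatness of $\Omega_j$, which is not available. Instead, I would use the same annulus estimate around $w_0$: since $D_j$ is cofat and $\hat f(q)$ is a point, and $\hat f_j(q_j)$ is a disk converging in Hausdorff sense to a subset of $\widehat\C\setminus V=\{w_0\}$, the transboundary modulus of curves crossing a small annulus around $w_0$ in $\hat D_j$ tends to $0$. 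Transferring back by conformal invariance, the curves in $\hat\Omega_j$ separating $q_j$ from $\partial U$ have transboundary modulus tending to $0$. On the other hand, the curves joining the non-degenerate set $q$ to $\partial U$ in $\hat\Omega_j$ (through $\Omega\subset\Omega_j$) have modulus bounded below by duality. I expect this reverse direction, and in particular justifying the duality and lower bound in $\hat\Omega_j$ without cofatness, to be the main difficulty. If that step fails, I would fall back on the following: $f$ extends continuously near $q$ with image $w_0$, so $q$ must be a single prime-end-type set, and I would exploit the hypothesis of Lemma \ref{lemma:potential} at two distinct points $z_0,z_0'\in\partial q$.
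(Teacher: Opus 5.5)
Your first implication ($q$ a point $\Rightarrow$ $\hat f(q)$ a point) is essentially the paper's argument. A non-degenerate limit $E$ gives a Loewner lower bound for curves joining $\hat f_j(q_j)$ to $f_j(\partial U)$, and Proposition~\ref{proposition:comparison} with conformal invariance moves this bound to $\hat\Omega_j$. Once $q_j\subset B(z_0,r)$, every curve from the point $q_j$ to $\pi_{\Omega_j}(\partial U)$ contains a curve of $\Gamma(r,j)$, namely its part after the last visit to $\pi_{\Omega_j}(S(z_0,r))$. The Jordan curves surrounding $q$ and the curves starting ``near'' the disk are unnecessary; just let the curves start at the point $\hat f_j(q_j)$ of $\hat D_j$.

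The converse has a genuine gap. Its key step is that $\Mod_{\Omega_j}\Gamma(q_j,\partial U;\hat\Omega_j)$ stays bounded below when $q$ is non-degenerate, ``by duality''. Nothing supports this: $\Omega_j$ is not cofat, and no transboundary duality is at hand. Even granting one, you would first need an upper bound for the separating family in $\hat\Omega_j$, which is exactly where the hypothesis of Lemma~\ref{lemma:potential} must enter, and your argument never uses that hypothesis. Non-degeneracy of $q$ alone gives nothing, as the following example shows.

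Take $q=[0,1]$ and let $\Sigma_k$ be the boundary of the $2^{-k}$-neighborhood of $q$. Let $p_k$ be $\Sigma_k$ minus an open subarc $g_k$ of length $\eta_k$ centered at $a_k$. Set $\widehat\C\setminus\Omega=q\cup\bigcup_k p_k$ and $\mathcal C(\Omega_j)=\{q,p_1,\dots,p_N\}$ with $N=N_j\to\infty$. Put $d_k=2^{-k-2}$, so the disks $\D(a_k,d_k)$ are disjoint and meet no component other than $p_k$, and choose $\eta_k$ with $\log(d_k/\eta_k)\ge 1$. Define $\rho(q)=0$, $\rho(p_k)=1/N$, and $\rho(z)=\sum_{k\le N}(N\log(d_k/\eta_k))^{-1}|z-a_k|^{-1}\chi_{\{\eta_k<|z-a_k|<d_k\}}(z)$ on $\Omega_j$. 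For each $k\le N$, a curve from $q$ to $\pi_{\Omega_j}(\partial U)$ either meets $p_k$ or runs inside $\Omega_j$ from $g_k$ to $\mathbb S(a_k,d_k)$. Hence $\rho$ is admissible, and its mass is at most $(1+2\pi)/N\to 0$.

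The obstruction is structural. The hypothesis yields upper bounds in $\hat\Omega_j$, by exhibiting densities. Proposition~\ref{proposition:comparison} yields lower bounds, and only in the cofat $\hat D_j$. Your plan needs the opposite on both sides. Likewise, your claimed upper bound for crossing curves in $\hat D_j$ does not follow from Proposition~\ref{proposition:comparison}. Also, conformal invariance turns those crossing curves into joining curves in $\hat\Omega_j$, not separating ones. The prime-end fallback is not an argument.

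The missing idea is to use the separating family on both sides.
\begin{itemize}
\item \emph{In $\hat D_j$.} Since $\{w_0\}\in\mathcal C(D)$, choose disjoint annuli $A_i=A(w_0;R_i/2,R_i)$, $i\le N$, such that no element of $\mathcal C(D)$ meets two of them. By Lemma~\ref{lemma:domains_converge}, the same holds for $\mathcal C(D_j)$ when $j$ is large. Proposition~\ref{proposition:comparison} gives $\Mod_{D_j}\pi_{D_j}(\Gamma^*(A_i))\geq C_1$. Restricting an admissible density to each $A_i$ gives $\Mod_{D_j}\Gamma^*(\hat f_j(q_j),f_j(\partial U);\hat D_j)\geq C_1N$, so $\Mod_{\Omega_j}\Gamma^*(q_j,\partial U;\hat\Omega_j)\to\infty$.
\item \emph{In $\hat\Omega_j$.} Suppose $q$ is non-degenerate. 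Pick $z_0\in\partial q$ and $R$ with $\bar B(z_0,R)\subset U$ and $q\not\subset B(z_0,R)$. By the hypothesis, choose $r$ and densities $\rho_j$ of mass $<1$ admissible for $\Gamma(r,j)$. Take disjoint arcs $S,T\subset\Omega$ from $S(z_0,r)$ to $\partial U$ with $\bar{N_\delta(T)}\subset\Omega$ and $S\cap N_\delta(T)=\emptyset$. Then $\rho_j+\delta^{-1}\chi_{N_\delta(T)}$ is admissible for $\Gamma^*(q_j,\partial U;\hat\Omega_j)$. Indeed, a separating curve cannot stay in $B(z_0,R)$. If it meets $\bar B(z_0,r)$, it contains a curve of $\Gamma(r,j)$. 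Otherwise it must meet both $S$ and $T$, and therefore crosses $N_\delta(T)$.
\end{itemize}
This uniform bound contradicts the blow-up.
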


The proof resembles the proof of the last part of \cite{EsmayliRajala:quasitripod}*{Theorem 1.3}.

\begin{proof}
Let $f_j\colon \Omega_j\to D_j$, $j\in \N$, be a sequence of conformal maps that converges locally uniformly in $\Omega$ to a conformal map $f\colon \Omega\to D$, as in Theorem \ref{theorem:general_caratheodory} \ref{theorem:general_caratheodory:3}. If $E$ is a Hausdorff limit of $\{\hat f_j(q_j)\}_{j\in \N}$, then the set $\hat f(q)$ is the complement of the component of $\widehat \C\setminus E$ that contains $D$, as in the conclusion of Theorem \ref{theorem:general_caratheodory}. In order to prove the current theorem, it suffices to show that $q$ is a singleton if and only if $E$ is a singleton.  Let $U\supset q$ be a Jordan region such that $\partial U\subset \Omega$ as in Theorem \ref{theorem:general_caratheodory} \ref{theorem:general_caratheodory:2}. 

Suppose that $E$ is non-degenerate. As $j\to\infty$ we have
\begin{align*}
\Delta(\hat f_j(q_j), f_j(\partial U))=\frac{\dist(\hat f_j(q_j), f_j(\partial U))}{\min\{\diam \hat f_j(q_j), \diam f_j(\partial U)\}} \to \frac{\dist(E,f(\partial U))}{\min\{\diam E,\diam f(\partial U)\}}
\end{align*}
so there exists $\Delta>0$ such that $\Delta(\hat f_j(q_j), f_j(\partial U))\leq \Delta$ for each $j\in \N$. By the Loewner property of $\widehat \C$ \cite{HeinonenKoskela:qc}*{Section 6}, there exists $\delta>0$ such that 
$$\Mod \Gamma( \hat f_j(q_j), f_j(\partial U);\widehat \C) \geq \delta\,\,\, \text{for each $j\in \N$.}$$
By the modulus comparison given in Proposition \ref{proposition:comparison}, we have
$$\Mod_{D_j} \Gamma( \hat f_j(q_j), f_j(\partial U);\hat D_j)\geq  c(\tau)^{-1}\min\{1,\delta\}\eqqcolon C_0\,\,\, \text{for each $j\in \N$.}$$
The conformal invariance of transboundary modulus implies that
$$\Mod_{\Omega_j} \Gamma( q_j, \partial U;\hat \Omega_j) \geq C_0\,\,\, \text{for each $j\in \N$.}$$
Suppose that $q$ consists of a single point $z_0$ and let $0<r<R$ such that $\bar B(z_0,R)\subset U$. By Theorem \ref{theorem:general_caratheodory} \ref{theorem:general_caratheodory:1} we have $q_j\to q$ as $j\to\infty$, so for sufficiently large $j\in \N$ we have $q_j\subset B(z_0,r)$, say for $j\geq j_0$. We note that each curve in $\Gamma( q_j, \partial U;\hat \Omega_j)$ contains a subcurve in 
$$\Gamma(r,j)=\Gamma(\pi_{\Omega_j}(S(z_0,r)), \pi_{\Omega_j}(S(z_0,R));  \hat \Omega_j\setminus \{q_j\}).$$
Thus, for $j\geq j_0$ we have
\begin{align*}
\Mod_{\Omega_j}&\Gamma(\pi_{\Omega_j}(S(z_0,r)), \pi_{\Omega_j}(S(z_0,R));  \hat \Omega_j\setminus \{q_j\})\geq \Mod_{\Omega_j} \Gamma( q_j, \partial U;\hat \Omega_j) \geq C_0.
\end{align*}
Letting $j\to\infty$ and then $r\to 0^+$ contradicts the condition of Lemma \ref{lemma:potential}. Therefore, $q$ is non-degenerate.

%We now prove the claim. Let $\gamma\in \Gamma(q_j,\partial U;\hat \Omega_j)$. Let $\phi_k\colon \widehat \C\to \hat\Omega_j$, $k\in \N$, be a sequence of homeomorphisms converging uniformly to $\pi_{\Omega_j}$. Then the Jordan curve $\phi_k(S(z_0,r))$ separates $\phi_k(q_j)$ from $\phi_k(\partial U)$ and thus from $\pi_{\Omega_j}(\partial U)$ for all sufficiently large $k\in \N$. Suppose that $q_j$ and $\phi_k(q_j)$ lie in distinct components of $\hat \Omega_j\setminus \phi_k(S(z_0,r))$ for all sufficiently large $k\in \N$. In the limit, since $\phi_k(q_j)\to q_j$ as $k\to\infty$, we see that $q_j\in \pi_{\Omega_j}(S(z_0,r))$. This implies that $q_j\cap S(z_0,r)\neq \emptyset$ in $\widehat \C$, which is a contradiction since $q_j\subset B(z_0,r)$. Thus, for large enough $k\in \N$ the point $q_j$ and the set $\phi_k(q_j)$ lie in the same complementary component of $\phi_k(S(z_0,r))$. Since $\gamma$ connects $q_j$ and $\pi_{\Omega_j}(\partial U)$, we conclude that $\gamma$ intersects $\phi_k(S(z_0,r))$ and, similarly, it intersects $\phi_k(S(z_0,R))$. It follows that $\gamma$ has a subcurve $\gamma_k$ with endpoints in $\phi_k(S(z_0,r))$ and $\phi_k(S(z_0,R))$ that is disjoint from $q_j$. Passing to the limit, we see that $\gamma$ has a subcurve $\gamma'$ with endpoints in $\pi_{\Omega_j}(S(z_0,r))$ and $\pi_{\Omega_j}(S(z_0,R))$ such that $q_j$ is not an interior point of $\gamma$. Note that $q_j$ cannot be an endpoint of $\gamma'$ either, since $q_j\subset B(z_0,r)$. Therefore, $\gamma'\in  \Gamma(r,j)$, as claimed.

Next, suppose that $E$ is a single point $w_0$. By passing to a subsequence, suppose that $\hat f_j(q_j)\to w_0$ as $j\to\infty$. We conclude that $\hat f(q)=w_0$ is a complementary component of $D$. Consider $R_1>0$ such that the ball $\bar B(w_0,R_1)$ is contained in the component of $\widehat \C\setminus f(\partial U)$ that contains $w_0$. We let $A_1=A(w_0;R_1/2,R_1)$. Since $w_0\in \mathcal C(D)$, there exists $R_2<R_1/2$ such that no $p\in \mathcal C(D)$ intersects both annuli $\bar A_1$ and $\bar A_2$, where $A_2=A(w_0;R_2/2,R_2)$.  Inductively, for each $i\in \N$ we may find an annulus $A_i=A(w_0;R_i/2,R_i)$ with $R_{i+1}<R_i/2$ for $i\in \N$ such that no $p\in \mathcal C(D)$ intersects both $\bar A_i$ and $\bar A_{i+1}$. As a consequence, for each fixed $i\in \N$ and for all sufficiently large $j\in \N$ no component $p\in \mathcal C(D_j)$ intersects both annuli $\bar A_i$ and $\bar A_{i+1}$. Indeed, if there exists $p_j\in \mathcal C(D_j)$ that intersects $\bar A_i$ and $\bar A_{i+1}$ for infinitely many $j\in \N$, then there exists a Hausdorff limit $F$ of $\{p_j\}_{j\in \N}$ that intersects both $\bar A_i$ and $\bar A_{i+1}$. By Lemma \ref{lemma:domains_converge} \ref{lemma:domains_converge:4}, there exists $p\in \mathcal C(D)$ such that $F\subset p$. Thus, $p$ intersects both $\bar A_i$ and $\bar A_{i+1}$, a contradiction.

Let $N\in \N$. For all sufficiently large $j\in\N$, no $p\in \mathcal C(D_j)$ intersects two of the annuli $A_i$, $i\in \{1,\dots,N\}$. Also, there exists $C_0>0$ such that
$$\Mod \Gamma^*(A_i) = \frac{1}{2\pi} \log \frac{\tan(R_i/2)}{\tan(R_i/4)}\geq C_0 \,\,\, \text{for all $i\in \{1,\dots,N\}$}.$$
By the modulus comparison from Proposition \ref{proposition:comparison}, we have 
$$\Mod_{D_j}\pi_{D_j}(\Gamma^*(A_i)) \geq c(\tau)^{-1} \min\{ 1, C_0\}\eqqcolon C_1\,\,\, \text{for all $j\in \N$}.$$
For all sufficiently large $j\in \N$ we have $\hat f_j(q_j) \subset B(w_0, R_N/2)$. This implies that $\pi_{D_j}(\Gamma^*(A_i))\subset \Gamma^*(\hat f_j(q_j),  f_j(\partial U);\hat D_j)$ for each $i\in \{1,\dots,N\}$. Let $\rho\colon \hat D_j\to [0,\infty]$ be admissible for the transboundary modulus of $\Gamma^*(\hat f_j(q_j),  f_j(\partial U);\hat D_j)$ and for $i\in \{1,\dots,N\}$ we define 
$$\widetilde \rho_i(z)=\begin{cases}\rho(z)\cdot \chi_{A_i\cap D_j}(z),& z\in D_j\\
\rho(z), & \text{$z=p$ for some $p\in \mathcal C(D_j)$ with $p\cap A_i\neq \emptyset$}\\
0,& \text{$z=p$ for some $p\in \mathcal C(D_j)$ with $p\cap A_i=\emptyset$}
\end{cases}.$$
Then $\widetilde \rho_i$ is admissible for $\pi_{D_j}(\Gamma^*(A_i))$ for each $i\in \{1,\dots,N\}$. Since the annuli $A_1,\dots,A_N$ are disjoint and each component $p\in \mathcal C(D_j)$ can intersect at most one of these annuli for sufficiently large $j\in \N$, we obtain
\begin{align*}
\int_{D_j} \rho^2\, d\Sigma+\sum_{p\in \mathcal C(D_j)}\rho(p)^2 &\geq \sum_{i=1}^N \left(\int_{D_j} \widetilde\rho_i^2\, d\Sigma+\sum_{p\in \mathcal C(D_j)} \widetilde \rho_i(p)^2\right) \\
&\geq \sum_{i=1}^N\Mod_{D_j}\pi_{D_j}(\Gamma^*(A_i))\geq C_1N.
\end{align*}
Infimizing over $\rho$ gives
$$\Mod_{D_j}\Gamma^*(\hat f_j(q_j),  f_j(\partial U); \hat D_j) \geq C_1N.$$ 
By letting $j\to\infty$ and then $N\to\infty$, we obtain
$$\lim_{j\to\infty}\Mod_{D_j}\Gamma^*(\hat f_j(q_j),  f_j(\partial U);\hat D_j)=\infty.$$
By the conformal invariance of transboundary modulus, we have
\begin{align}\label{theorem:koebe_nondegenerate:infinity}
\lim_{j\to\infty}\Mod_{\Omega_j}\Gamma^*(q_j,  \partial U;\hat \Omega_j)=\infty.
\end{align}

For the sake of contradiction, suppose that $q$ is non-degenerate. Let $z_0\in \partial q$ and fix $R>0$ such that $\bar B(z_0,R)\subset U$ and $q\not\subset B(z_0,R)$. Also, by the condition of Lemma \ref{lemma:potential}, there exist $r\in (0,R)$ and $j_0\in \N$ such that 
$$\Mod_{\Omega_j} \Gamma(j)<1\,\,\, \text{for $j\geq j_0$},$$
where $\Gamma(j)=\Gamma(\pi_{\Omega_j}(S(z_0,r)), \pi_{\Omega_j}(S(z_0,R));  \hat \Omega_j\setminus \{q_j\})$. If $S(z_0,r)\subset \widehat \C\setminus \Omega$, then $S(z_0,r)\subset q$, so $\bar B(z_0,r)\subset q$, which contradicts the assumption that $z_0\in \partial q$. Thus $S(z_0,r)\cap \Omega\neq \emptyset$. Consider disjoint arcs $S,T$ in $\Omega$ connecting $S(z_0,r)$ and $\partial U$. Also, let $\delta>0$ such that $\bar{N_\delta(T)}\subset \Omega$ and $S\cap {N_\delta(T)}= \emptyset$, where
$$N_\delta(T)= \{z\in \widehat \C: \dist(z,T)<\delta\}.$$
See Figure \ref{fig:potential} for an illustration. For all sufficiently large $j\in \N$, we have $S\cup \bar {N_\delta(T)}\subset \Omega_j$ by Theorem \ref{theorem:general_caratheodory} \ref{theorem:general_caratheodory:1}. Let $\rho_j$ be an admissible density for $\Gamma(j)$ with total mass bounded above by $1$. We define $\widetilde \rho_j=\rho_j$ in the complement of $\Omega_j$ and $\widetilde \rho_j=\rho_j+ \delta^{-1}\chi_{N_\delta(T)}$ in $\Omega_j$. Note that 
$$\int_{\Omega_j} \widetilde \rho_j^2\, d\Sigma+ \sum_{p\in \mathcal C(\Omega_j)}\widetilde \rho_j(p)^2\leq (1+\delta^{-1}\Sigma(N_\delta(T))^{1/2})^2\eqqcolon C_2$$
for all sufficiently large $j\in \N$. We claim that $\widetilde \rho_j$ is admissible for $\Gamma^*(q_j,  \partial U;\hat \Omega_j)$,  hence
$$\Mod_{\Omega_j}\Gamma^*(q_j,  \partial U;\hat \Omega_j)\leq C_2 $$
for all sufficiently large $j\in \N$. This contradicts \eqref{theorem:koebe_nondegenerate:infinity}, so $q$ must be a singleton.

\begin{figure}
\centering
	\begin{tikzpicture}
	
	\draw[rounded corners=10pt, line width=0.4cm, line cap=round, red!30] (-0.2,0.45)--(-0.5,1.3)--(0.5,1)--(1,2.2);
	\draw[rounded corners=10pt, thick, red] (-0.2,0.45)--(-0.5,1.3)--(0.5,1)--(1,2.2);
	\node at (0.1,1.5) {$\scriptstyle N_\delta(T)$};
	
	\draw[rounded corners, thick, red] (0.3,-0.4)--(0.8,-0.3)--(1.4,-0.8)--(2.2,0)--(3,-0.3);
	\node at (2.5,0.15) {$S$};
	
	\draw[line width=0.34cm, line cap=round] (-3,0)--(1,0);
	\draw[line width=0.3cm, line cap=round,color=black!20] (-3,0)--(1,0);
	\draw[line width=1pt, line cap=round] (-3,0)--(1,0);
	\node at (-3,-0.4) {$q_j$};
	
	\fill (0,0) circle (1.5pt);
	\draw[dashed] (0,0) circle (0.5cm);
	\draw[dashed] (0,0) circle (2cm);
	\node at (0,-0.7) {$\scriptstyle S(z_0,r)$};
	\node at (0,-1.8) {$\scriptstyle S(z_0,R)$};
	
	\draw[dotted, rounded corners, thick] (-4,-2.2) rectangle (3,2.2);
	\node at (3.4,-2) {$\partial U$};
	\end{tikzpicture}
	\caption{Two arcs $S,T\subset \Omega$ connecting $S(z_0,r)$ and $\partial U$ such that $S\cap N_\delta(T)=\emptyset$. }\label{fig:potential}
\end{figure}

We now prove our claim. Let $\gamma\in \Gamma^*(q_j,\partial U;\hat \Omega_j)$, so $\pi_{\Omega_j}^{-1}(|\gamma|)$ is a continuum that separates $q_j$ from $\partial U$. Since $q\not\subset B(z_0,R)$, we have $q_j\not\subset B(z_0,R)$ and $\pi_{\Omega_j}^{-1}(|\gamma|)\not \subset B(z_0,R)$. If $\pi_{\Omega_j}^{-1}(|\gamma|)$ intersects $\bar B(z_0,r)$, then $|\gamma|$ intersects both $\pi_{\Omega_j}(S(z_0,r))$ and $\pi_{\Omega_j}(S(z_0,R))$ so $\gamma$ has a subpath in $\Gamma(j)$ and 
$$\int_{\gamma} \rho_j\, ds+ \sum_{\substack{p\in \mathcal C(\Omega_j)\\ |\gamma|\cap p\neq \emptyset}}\rho_j(p)\geq 1.$$
Suppose that $\pi_{\Omega_j}^{-1}(|\gamma|)$ is disjoint from $\bar B(z_0,r)$. 
Note that $\bar B(z_0,r)\cup S$ and  $\bar B(z_0,r)\cup T$ are continua that connect $q_j$ and $\partial U$. Thus, $\pi_{\Omega_j}^{-1}(|\gamma|)$ must intersect both $S$ and $T$. Since $S\cap N_\delta(T)=\emptyset$, the set $\pi_{\Omega_j}^{-1}(|\gamma|)$ connects $T$ to $\partial N_\delta(T)$, so we have
$$ \int_\gamma \delta^{-1}\chi_{N_\delta(T)}\, ds \geq 1.$$
In both cases, 
$$\int_{\gamma} \widetilde \rho_j\, ds+ \sum_{\substack{p\in \mathcal C(\Omega_j)\\ |\gamma|\cap p\neq \emptyset}}\widetilde \rho_j(p)\geq 1,$$
which completes the proof of the admissibility.
\end{proof}

\subsection{Verification of main condition}
We provide sufficient conditions so that the main assumption of Lemma \ref{lemma:potential} holds. We continue to use the spherical metric.

\begin{lemma}\label{lemma:potential_sufficient}
Suppose that $\Omega,q$ and $\Omega_j,q_j$, $j\in \N$, are as in Theorem \ref{theorem:general_caratheodory} \ref{theorem:general_caratheodory:1} and that the following conditions hold.
\begin{enumerate}[label=\normalfont(\arabic*)]
\item\label{lemma:potential_sufficient:1} For each $\varepsilon>0$ the family of components $p\in \mathcal C(\Omega)$ with spherical diameter greater than $\varepsilon$ is finite.
\item\label{lemma:potential_sufficient:3} For each $z_0\in \partial q$ there exists a sequence of annuli $\{A_i=\widehat \C\setminus (E_i\cup F_i)\}_{i\in \N}$ that separate $z_0$ from a fixed point of $\Omega$, converge to $z_0$ in the Hausdorff sense, and
$$\sup_{i\in \N} \left( \limsup_{j\to\infty}\Mod_{\Omega_j} \Gamma(\pi_{\Omega_j}(E_i),\pi_{\Omega_j}(F_i); \pi_{\Omega_j}(\bar A_i)) \right)<\infty.$$ 
\end{enumerate}
Then the condition of Lemma \ref{lemma:potential} is satisfied.
\end{lemma}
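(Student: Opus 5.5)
The plan is the standard ``many disjoint annuli'' argument, in the dual form of the argument used for \eqref{theorem:koebe_nondegenerate:infinity}: a family of curves that must cross $N$ separated annuli has modulus at most of order $1/N$. Fix $z_0\in\partial q$ and $R\in(0,\pi)$, and let $\{A_i\}$ be the annuli from \ref{lemma:potential_sufficient:3}. Let $M$ be the finite supremum there, so for each $i$ and all $j\ge j(i)$ there is an admissible density $\rho_{i,j}$ for $\Gamma_{i,j}=\Gamma(\pi_{\Omega_j}(E_i),\pi_{\Omega_j}(F_i);\pi_{\Omega_j}(\bar A_i))$ with mass at most $M+1$. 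We may normalize so that $F_i$ is the component containing $z_0$ and $E_i$ contains the fixed point of $\Omega$. Since $A_i\to z_0$ in the Hausdorff sense, for each $N$ we choose indices $i_1,\dots,i_N$ recursively so that $\bar A_{i_{k+1}}\subset F_{i_k}$, all the annuli lie in $B(z_0,R)$, and they are pairwise disjoint with disjoint closures.

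Next we separate the complementary components. By \ref{lemma:potential_sufficient:1}, only finitely many $p\in\mathcal C(\Omega)$ have diameter larger than a given $\varepsilon$. The components meeting two of the closures $\bar A_{i_k}$ need diameter at least the gap between consecutive annuli. We therefore choose the annuli so that only $q$ itself, or finitely many components that we control, meet two of them. The key step is to pass this property to $\Omega_j$ for large $j$, by the Hausdorff-limit argument of Lemma~\ref{lemma:domains_converge}~\ref{lemma:domains_converge:3}. If $p_j\in\mathcal C(\Omega_j)\setminus\{q_j\}$ met $\bar A_{i_k}$ and $\bar A_{i_l}$ for infinitely many $j$, a Hausdorff limit would lie in some $p\in\mathcal C(\Omega)$ meeting both. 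To rule this out we must shrink the annuli iteratively, as in the construction of the $A_i$ in the proof of Theorem~\ref{theorem:koebe_nondegenerate}. Each step only needs to avoid the finitely many components of diameter above the current gap. A component $p\ne q$ containing $z_0$ in its closure is impossible, since $z_0\in q$. The subtle case is that $q_j$ itself may meet several annuli. This is harmless because curves in $\Gamma(r,j)$ avoid $q_j$, so we set every density to $0$ at $q_j$.

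Now fix $r$ small enough that $B(z_0,r)\subset F_{i_N}$, and let $\gamma\in\Gamma(r,j)$. Then $\pi_{\Omega_j}^{-1}(|\gamma|)$ is a continuum joining $S(z_0,r)$ to $S(z_0,R)$, so it crosses each $\bar A_{i_k}$. Hence $\gamma$ has a subcurve in $\Gamma_{i_k,j}$ for each $k$, where this subcurve avoids $q_j$. Define
\[
\rho_j=\frac1N\sum_{k=1}^N \widetilde\rho_{i_k,j},
\]
where $\widetilde\rho_{i_k,j}$ is $\rho_{i_k,j}$ restricted to $A_{i_k}\cap\Omega_j$ and to those $p$ meeting $\bar A_{i_k}$, and is $0$ at $q_j$. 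Then $\rho_j$ is admissible for $\Gamma(r,j)$. By disjointness of supports, both the disjoint annuli and the fact that each $p\ne q_j$ meets at most one $\bar A_{i_k}$, its mass is
\[
\frac{1}{N^2}\sum_k \mathrm{mass}(\widetilde\rho_{i_k,j})\le \frac{M+1}{N}.
\]
Therefore $\limsup_j\Mod_{\Omega_j}\Gamma(r,j)\le (M+1)/N$ for all small $r$. Letting $N\to\infty$ gives the condition of Lemma~\ref{lemma:potential}.

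One point needs care in the admissibility step. The subcurve guaranteed by crossing $\bar A_{i_k}$ in $\hat\Omega_j$ must genuinely lie in $\pi_{\Omega_j}(\bar A_{i_k})$ with endpoints in $\pi_{\Omega_j}(E_{i_k})$ and $\pi_{\Omega_j}(F_{i_k})$. Components $p$ straddling $\partial A_{i_k}$ create this issue, and we handle it by taking a minimal subarc between the two projected sets.

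The main obstacle is the separation step: arranging that, for large $j$, no component other than $q_j$ meets two of the chosen annuli. This requires interleaving the choice of annuli with the finiteness condition \ref{lemma:potential_sufficient:1} and the Hausdorff-limit argument.
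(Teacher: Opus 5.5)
Your route is the paper's route step for step: nested annuli chosen via \ref{lemma:potential_sufficient:1} so that no $p\in\mathcal C(\Omega)\setminus\{q\}$ meets two of them, densities cut off to the annuli and set to $0$ at $q_j$, and the average $\frac1N\sum_k\widetilde\rho_{i_k,j}$ of mass $O(1/N)$. However, the step you single out as the main obstacle does not go through, and the paper's proof makes the same move. Suppose $p_j\in\mathcal C(\Omega_j)\setminus\{q_j\}$ meets two of the annuli for infinitely many $j$. Lemma~\ref{lemma:domains_converge}~\ref{lemma:domains_converge:3} only places a Hausdorff limit inside \emph{some} $p\in\mathcal C(\Omega)$, and nothing rules out $p=q$; your choice of annuli excludes only $p\neq q$. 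The case you call subtle ($q_j$ meeting several annuli) is harmless. Components $p_j\neq q_j$ collapsing onto $q$ are not: for them $\rho_j(p_j)=\frac1N\sum_k\rho_{i_k,j}(p_j)$, and $\rho_j(p_j)^2$ need not be $O(1/N)$.

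In fact the lemma fails as stated. Take $\Omega=\widehat\C\setminus\bar\D$, $q=q_j=\bar\D$, and $\Omega_j=\Omega\setminus p_j$ with $p_j=\{(1+1/j)e^{it}:|t|\le\pi/2\}$. Condition \ref{theorem:general_caratheodory:1} of Theorem~\ref{theorem:general_caratheodory} and condition \ref{lemma:potential_sufficient:1} are clear. Condition \ref{lemma:potential_sufficient:3} holds for round annuli $\mathbb A(z_0;r_i/2,r_i)$, because the classical extremal density together with $\rho(q)=\rho(p_j)=1$ is admissible, with mass at most $2\pi/\log 2+2$. Yet take $z_0=1$, $R$ small, and any $r\in(0,R)$. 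Once $j$ is large, the point $\pi_{\Omega_j}(p_j)\neq q_j$ lies in both $\pi_{\Omega_j}(S(z_0,r))$ and $\pi_{\Omega_j}(S(z_0,R))$. So every admissible density has $\rho(p_j)\geq 1$ (test with short loops from $\pi_{\Omega_j}(p_j)$ along normal segments to $p_j$), and the modulus in Lemma~\ref{lemma:potential} stays $\geq 1$. What is missing is a hypothesis preventing this collapse, e.g.\ that Hausdorff limits of components $p_j\in\mathcal C(\Omega_j)\setminus\{q_j\}$ with diameters bounded below are not contained in $q$; with it, your argument (and the paper's) is complete. This hypothesis holds in both applications. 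In Theorem~\ref{theorem:john:restate} one has $\mathcal C(\Omega_j)\setminus\{q_j\}\subset\mathcal C(\Omega)\setminus\{q\}$, so the construction of the annuli already gives the separation for every $j$. In Theorem~\ref{theorem:exhaustion}, quasiroundness provides balls $B_{p_j}\subset p_j$ of radius bounded below that are disjoint from $q_j(q)\supset q$, so their limit cannot lie in $q$.
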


In the notation $A_i=\widehat \C\setminus (E_i\cup F_i)$ it is implicitly understood that $E_i$ and $F_i$ are the two complementary components of the annulus $A_i$.

\begin{proof}
Let $R\in (0,\pi)$, $z_0\in \partial q$, and $N\in \N$. By assumption \ref{lemma:potential_sufficient:3}, there exist annuli $A_{i}$, $i\in \N$, that separate $z_0$ from a fixed base point $z_\infty\in \Omega$,  shrink to $z_0$ as $i\to\infty$, and satisfy
\begin{align}\label{assumption:pointwise}
\limsup_{j\to\infty}\Mod_{\Omega_j} \Gamma(i,j) < C(z_0)\,\,\, \text{for all $i\in \N$},
\end{align}
where $\Gamma(i,j)=\Gamma(\pi_{\Omega_j}(E_i),\pi_{\Omega_j}(F_i); \pi_{\Omega_j}(\bar A_i))$. We let $i_1\in \N$ such that $A_{i_1}\subset B(z_0,R)$. Suppose that the index $i_{k-1}$ has been defined for some $k\geq 2$. Since $A_i$ converges to $z_0$ as $i\to\infty$, there exists $i_k\in \N$, $i_k>i_{k-1}$, such that $\bar A_{i_k}$ is contained in the complementary component of $A_{i_{k-1}}$ that contains $z_0$. Moreover, by \ref{lemma:potential_sufficient:1}, at most finitely many components $p\in \mathcal C(\Omega)$ can intersect both $A_{i_k}$ and $ A_{i_{k-1}}$. By choosing $A_{i_{k}}$ to be even smaller, we may further have that no component $p\in \mathcal C(\Omega)$ other than $q$ intersects both $ A_{i_k}$ and $ A_{i_{k-1}}$. Let $\delta>0$ such that $B(z_0,\delta)$ is contained in the complementary component of $A_{i_N}$ that contains $z_0$.

For sufficiently large $j\in \N$, depending on $N$, by \eqref{assumption:pointwise}, there exists an admissible function $\rho_{j,k}\colon \hat \Omega_j\to[0,\infty]$ for $\Gamma(i_k,j)$ such that
$$\int_{\Omega_j} \rho_{j,k}^2\, d\Sigma + \sum_{p\in \mathcal C(\Omega_j)}\rho_{j,k}(p)^2<C(z_0)\,\,\, \text{for each $k=1,\dots,N$.}$$
For $k\in \{1,\dots,N\}$ we set $\widetilde \rho_{j,k}=\rho_{j,k}\cdot \chi_{A_{i_k}}$ in $\Omega_j$, $\widetilde \rho_{j,k}(q_j)=0$, $\widetilde \rho_{j,k}(p)=0$ for $p\in \mathcal C(\Omega_j)$ such that $p\cap A_{i_k}=\emptyset$, and $\widetilde \rho_{j,k}(p)=\rho_{j,k}(p)$ otherwise. The function $\rho=\frac{1}{N}\sum_{k=1}^N \widetilde\rho_{j,k}$ is admissible for  $\Gamma(\pi_{\Omega_j}(S(z_0,r)), \pi_{\Omega_j}(S(z_0,R)); \hat \Omega_j\setminus \{q_j\})$ for every $r\in (0,\delta)$. 

Suppose that for infinitely many $j\in \N$ there exists a component $p_{j}\in \mathcal C(\Omega_{j})$ that intersects two of the annuli $A_{i_k}$, $k\in \{1,\dots,N\}$. Then, by Lemma \ref{lemma:domains_converge} \ref{lemma:domains_converge:3}, there exists $p\in \mathcal C(\Omega)$ intersecting two such annuli. This is a contradiction. Thus, for all sufficiently large $j\in \N$, each component of $\mathcal C(\Omega_j)$ intersects at most one of the annuli $A_{i_k}$, $k\in \{1,\dots,N\}$. It follows that
$$\int_{\Omega_j}\rho^2\, d\Sigma +\sum_{p\in \mathcal C(\Omega_j)}\rho(p)^2\leq  \frac{1}{N^2} \sum_{k=1}^N \left(\int_{\Omega_j} \rho_{j,k}^2\, d\Sigma + \sum_{p\in \mathcal C(\Omega_j)}\rho_{j,k}(p)^2\right)\leq \frac{C(z_0)}{N}$$
for all sufficiently large $j\in \N$. We conclude that 
$$\Mod_{\Omega_j}\Gamma(\pi_{\Omega_j}(S(z_0,r)), \pi_{\Omega_j}(S(z_0,R)); \hat \Omega_j\setminus \{q_j\})\leq \frac{C(z_0)}{N}$$
for every $r\in (0,\delta)$ and for all sufficiently large $j\in \N$. By letting $j\to\infty$, $r\to 0^+$, and $N\to\infty$, we obtain the condition of Lemma \ref{lemma:potential}, as desired.
\end{proof}

In the following notions we use the Euclidean metric. Recall from Section \ref{section:john} the definitions of $\mathbb A(z_0;r,R)$, $\D(z_0,r)$, and $\mathbb S(z_0,r)$ for $z_0\in \C$ and $0<r<R$. Moreover, we set $\mathbb A(\infty;r,R)=\{z\in \C: R^{-1}<|z|<r^{-1}\}=\mathbb A(0;R^{-1},r^{-1})$, $\mathbb D(\infty,r)=\{z\in \C: |z|>r^{-1}\}\cup \{\infty\}$, and $\mathbb S(\infty,r)=\partial \D(\infty,r)=\mathbb S(0,r^{-1})$. Two-dimensional Lebesgue measure in $\C$ is denoted by $m_2$.

For a set $E\subset \widehat \C$ and an annulus $A=\mathbb A(z_0;r,R)$, $z_0\in \C$, such that $E\cap A\neq \emptyset$ we define $r_A(E)=\inf\{|z_0-z|: z\in A\cap E\}$ and $R_A(E)=\sup\{|z_0-z|:z\in A\cap E\}$. If $A=\mathbb A(\infty;r,R)=\mathbb A(0;R^{-1},r^{-1})$, then $r_A(E)=\inf\{|z|:z\in A\cap E\}$ and $R_A(E)=\sup\{|z|: z\in A\cap E\}$.  We also define the \textit{width of $E$ relative to the annulus $A$} as 
$$w_A(E)=\log \frac{R_A(E)}{r_A(E)}.$$
If $E\cap A=\emptyset$ we set $w_A(E)=0$.

\begin{lemma}\label{lemma:diameter_liminf}
Let $\Omega\subset \widehat \C$ be a countably connected domain. For $z_0\in \widehat \C$, $0<r<R$, and $A=\mathbb A(z_0;r,R)$ we have
\begin{align}\label{lemma:diameter_liminf:claim}
\begin{aligned}
\Mod_{\Omega}  \Gamma( \pi_{\Omega}(\mathbb S(z_0,r)),&\pi_{\Omega}(\mathbb S(z_0,R)); \pi_{\Omega}(\bar A)) \\
&\leq 2\pi\bigg(\log\frac{R}{r}\bigg)^{-1} +\bigg(\log\frac{R}{r}\bigg)^{-2}\sum_{\substack{p\in \mathcal C(\Omega)\\ p\cap A\neq \emptyset}} w_{A} (p)^2.
\end{aligned}
\end{align}
\end{lemma}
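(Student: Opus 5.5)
We exhibit an explicit admissible density. Work in the Euclidean metric, which is allowed since for $z_0\neq\infty$ the curves stay in $\pi_\Omega(\C)$. The case $z_0=\infty$ reduces to $z_0=0$ via $z\mapsto 1/z$, which maps $\mathbb A(\infty;r,R)$ onto $\mathbb A(0;R^{-1},r^{-1})$; this inversion preserves transboundary modulus by conformal invariance, and $w_A$ is defined compatibly. So assume $z_0\in\C$ and set $L=\log(R/r)$. Define $\rho$ on $\Omega$ by
\[
\rho(z)=\frac{1}{L|z-z_0|}\chi_{A}(z),
\]
and on $\mathcal C(\Omega)$ by $\rho(p)=w_A(p)/L$ when $p\cap A\neq\emptyset$, and $\rho(p)=0$ otherwise. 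We first check that this $\rho$ is Borel on $\hat\Omega$. Since $\Omega$ is countably connected, $\mathcal C(\Omega)$ is countable, and $\pi_\Omega(\Omega)$ is open in $\hat\Omega$; hence any function that is Borel on $\Omega$ and arbitrary on the countable set $\mathcal C(\Omega)$ is Borel. The mass is then bounded by
\[
\int_{\Omega}\rho^2\,dm_2\le \frac{1}{L^2}\int_A\frac{dm_2}{|z-z_0|^2}=\frac{2\pi}{L},
\]
plus $L^{-2}\sum_{p\cap A\neq\emptyset} w_A(p)^2$. This is exactly the right-hand side of \eqref{lemma:diameter_liminf:claim}.

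\textbf{Admissibility.} Let $\gamma\colon[a,b]\to\pi_\Omega(\bar A)$ run from $\pi_\Omega(\mathbb S(z_0,r))$ to $\pi_\Omega(\mathbb S(z_0,R))$. The idea is that the function $u(z)=\log|z-z_0|$ must increase by $L$ along $\gamma$. On the $\Omega$-pieces $\alpha_j$ of $\gamma$, the increase of $u$ is at most $\int_{\alpha_j}\rho\,ds$, because $|\nabla u|=1/|z-z_0|$. Each visit to a complementary component $p$ is a jump in $u$ bounded by $w_A(p)=\log R_A(p)-\log r_A(p)$. There is one issue at the boundary circles. A component meeting $\bar A$ but not $A$ lies in the complement of $A$ and touches one of the circles $\mathbb S(z_0,r)$, $\mathbb S(z_0,R)$. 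Its jump is harmless: at the start or end of $\gamma$ such a component is just an endpoint. In the interior of $\gamma$ the curve remains in $\pi_\Omega(\bar A)$, so such a component contributes no radial progress.

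To make this rigorous, consider $t\mapsto$ the set of radii of $\pi_\Omega^{-1}(\gamma(t))\cap\bar A$. Define a lower semicontinuous "progress" function using the closed interval $[r_A(p),R_A(p)]$ clipped to $[r,R]$. Then $[r,R]$ is covered by three kinds of sets: the radial images $\{|\alpha_j(s)-z_0|\}$ of the $\Omega$-pieces; the intervals $[r_A(p),R_A(p)]$ for components $p$ met by $\gamma$; and the radial image of the set of times $t$ with $\gamma(t)\in\mathcal C(\Omega)$. This covering follows from connectedness of the continuum $\pi_\Omega^{-1}(|\gamma|)\cap\bar A$. It is a continuum since $\pi_\Omega$ is monotone, and it meets both circles. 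Projecting by $u$, the set $[\log r,\log R]$ is contained in
\[
\bigcup_j u(|\alpha_j|)\;\cup\;\bigcup_{p}[\log r_A(p),\log R_A(p)],
\]
with the union over $p$ ranging over components met by $|\gamma|$. The countably many components met give only countably many intervals. Taking one-dimensional Lebesgue measure then gives
\[
L\le\sum_j\int_{\alpha_j}\frac{ds}{|z-z_0|}+\sum_{p} w_A(p).
\]
Dividing by $L$ yields admissibility.

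\textbf{Main obstacle.} The delicate step is justifying the covering claim carefully. We must show that $\pi_\Omega^{-1}(|\gamma|)$ equals the union of the traces $|\alpha_j|$ and the components $p$ met by $\gamma$. We must also handle components $p$ meeting $\bar A$ but not $A$, for which $w_A(p)=0$; here $\rho(p)=0$ but $p$ might be large. For such a $p$, I would check that $p\cap\bar A\subset\partial A$ lies in one circle, so its radial contribution is a single point and does not affect the Lebesgue-measure count.
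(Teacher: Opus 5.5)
Your proposal is correct and is essentially the paper's proof: you use the same density $\rho=\chi_A/(\log(R/r)\,|z-z_0|)$ on $\Omega$ and $\rho(p)=w_A(p)/\log(R/r)$ on $\mathcal C(\Omega)$ and the same mass count, and you sketch the admissibility (which the paper omits by citing the literature) via a radial covering that uses countability of $\mathcal C(\Omega)$. One repair is needed in that sketch: $\pi_\Omega^{-1}(|\gamma|)\cap\bar A$ need not be connected, because a component met by $\gamma$ may leave $\bar A$ and re-enter, so apply $h(z)=\min\{\max\{|z-z_0|,r\},R\}$ to the whole continuum $\pi_\Omega^{-1}(|\gamma|)$ instead, whose image is then $[r,R]$, and note that $h(p)\subset[r_A(p),R_A(p)]$ if $p\cap A\neq\emptyset$ while $h(p)$ is a single point otherwise (also, in the paper's convention $\mathbb A(\infty;r,R)$ already equals $\mathbb A(0;R^{-1},r^{-1})$, so no inversion is needed for $z_0=\infty$).
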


\begin{proof}
If $z_0\in \C$, we set 
$$\rho(z)= \frac{\chi_{A}(z)}{\log(R/r)\cdot |z-z_0|}\,\, \text{for $z\in \Omega$} \quad \text{and} \quad \rho(p)=\frac{w_{A}(p)}{\log (R/r)}\,\, \text{for $p\in \mathcal C(\Omega)$}.$$
If $z_0=\infty$, we replace $|z-z_0|$ with $|z|$ in the first of the above formulas. Since $\Omega$ is countably connected, one can show that $\rho$ is admissible for the transboundary modulus of $\Gamma( \pi_{\Omega}(\mathbb S(z_0,r)),\pi_{\Omega}(\mathbb S(z_0,R)); \pi_{\Omega}(\bar A))$. The argument can be found in the proof of \cite{Ntalampekos:schottky}*{Prop.\ 5.8} or \cite{Bonk:uniformization}*{Prop.\ 8.7}, so we omit it. The total mass of $\rho$ is exactly the quantity in the right hand side of \eqref{lemma:diameter_liminf:claim}.
\end{proof}

We now combine the previous lemmas.

\begin{proposition}\label{prop:geometric_sufficient}
Suppose that $\Omega,q$ and $\Omega_j,q_j$, $j\in \N$, are as in Theorem \ref{theorem:general_caratheodory} \ref{theorem:general_caratheodory:1} and that the following conditions hold.
\begin{enumerate}[label=\normalfont(\arabic*)]
\item\label{prop:geometric_sufficient:1} For each $\varepsilon>0$ the family of components $p\in \mathcal C(\Omega)$ with spherical diameter greater than $\varepsilon$ is finite.
\item\label{prop:geometric_sufficient:2} $\Omega_j$ is countably connected for each $j\in \N$. 
\item \label{prop:geometric_sufficient:3} For each $z_0\in \partial q$ and for $A_r=\mathbb A(z_0;r/2,r)$, $r>0$, we have
$$\liminf_{r\to 0^+}\limsup_{j\to\infty}\sum_{\substack{p\in \mathcal C(\Omega_j)\\ p\cap A_r\neq \emptyset}} w_{A_r} (p)^2 <\infty.$$
\end{enumerate}
Then the condition of Lemma \ref{lemma:potential} is satisfied.
\end{proposition}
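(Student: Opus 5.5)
The plan is to verify the hypotheses of Lemma \ref{lemma:potential_sufficient}. Its condition \ref{lemma:potential_sufficient:1} is exactly our assumption \ref{prop:geometric_sufficient:1}. So the work is to produce, for each $z_0\in\partial q$, a sequence of annuli $A_i$ with bounded $\limsup_j$ of transboundary modulus. The natural candidates are the annuli $A_{r}=\mathbb A(z_0;r/2,r)$ along a sequence $r_i\to 0^+$ realizing the $\liminf$ in \ref{prop:geometric_sufficient:3}. (If $z_0=\infty$, we use $\mathbb A(\infty;r/2,r)$ with the conventions just set up.)

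First, by \ref{prop:geometric_sufficient:3} we choose $r_i\to0^+$ and a constant $M<\infty$ such that
\[\limsup_{j\to\infty}\sum_{\substack{p\in \mathcal C(\Omega_j)\\ p\cap A_{r_i}\neq \emptyset}} w_{A_{r_i}}(p)^2\le M \quad\text{for all } i.\]
Since $r_i\to0$, the annuli $A_{r_i}$ converge to $z_0$ in the Hausdorff sense. Fix a base point $z_\infty\in\Omega$ with $z_\infty\neq z_0$. For all large $i$, the annulus $A_{r_i}$ separates $z_0$ from $z_\infty$; discarding finitely many indices arranges this for every $i$.

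Next, write $A_{r_i}=\widehat\C\setminus(E_i\cup F_i)$. Here $E_i$ is the closed Euclidean disk $\bar\D(z_0,r_i/2)$ and $F_i$ is the complement of $\D(z_0,r_i)$. Every curve in $\Gamma(\pi_{\Omega_j}(E_i),\pi_{\Omega_j}(F_i);\pi_{\Omega_j}(\bar A_i))$ has a subcurve in $\Gamma(\pi_{\Omega_j}(\mathbb S(z_0,r_i/2)),\pi_{\Omega_j}(\mathbb S(z_0,r_i));\pi_{\Omega_j}(\bar A_i))$. By monotonicity, Lemma \ref{lemma:diameter_liminf} then applies to $\Omega_j$, which is legitimate because $\Omega_j$ is countably connected by \ref{prop:geometric_sufficient:2}. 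Since $\log(R/r)=\log 2$, it gives
\[\Mod_{\Omega_j}\Gamma(\pi_{\Omega_j}(E_i),\pi_{\Omega_j}(F_i);\pi_{\Omega_j}(\bar A_i))\le \frac{2\pi}{\log 2}+\frac{1}{(\log 2)^2}\sum_{\substack{p\in \mathcal C(\Omega_j)\\ p\cap A_{r_i}\neq\emptyset}} w_{A_{r_i}}(p)^2.\]
Taking $\limsup_j$ bounds the left side by $2\pi/\log2+M/(\log 2)^2$, uniformly in $i$. This is condition \ref{lemma:potential_sufficient:3}, and Lemma \ref{lemma:potential_sufficient} then gives the conclusion.

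The main point needing care is the subcurve reduction. A curve in $\pi_{\Omega_j}(\bar A_i)$ joining $\pi_{\Omega_j}(E_i)$ to $\pi_{\Omega_j}(F_i)$ must meet the projected circles. The reason is that the preimage of its trace is a continuum in $\bar A_i$ union components meeting $\bar A_i$, and such a continuum must cross from the inner to the outer circle. A second point is the metric: Lemma \ref{lemma:potential_sufficient} is phrased spherically and Lemma \ref{lemma:diameter_liminf} Euclidean. This causes no difficulty, because the modulus values are metric-independent, as remarked in Section \ref{section:modulus}.
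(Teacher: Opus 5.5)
Your proposal is correct and follows the same route as the paper. You choose $r_i\to 0^+$ along which the finite $\liminf$ in \ref{prop:geometric_sufficient:3} is attained, bound the transboundary modulus across $\mathbb A(z_0;r_i/2,r_i)$ uniformly in $i$ via Lemma \ref{lemma:diameter_liminf} (using \ref{prop:geometric_sufficient:2}), and then invoke Lemma \ref{lemma:potential_sufficient} together with \ref{prop:geometric_sufficient:1}; your subcurve reduction is valid, and in fact the two curve families coincide, since $\pi_{\Omega_j}(E_i)\cap\pi_{\Omega_j}(\bar A_i)=\pi_{\Omega_j}(\mathbb S(z_0,r_i/2))$ and similarly for $F_i$.
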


\begin{proof}
By Lemma \ref{lemma:diameter_liminf} and assumptions \ref{prop:geometric_sufficient:2}, \ref{prop:geometric_sufficient:3}, for each $z_0\in \partial q$ we have
$$\liminf_{r\to0^+}\limsup_{j\to\infty}\Mod_{\Omega_j}  \Gamma( \pi_{\Omega_j}(\mathbb S(z_0,r/2)),\pi_{\Omega}(\mathbb S(z_0,r)); \pi_{\Omega_j}(\bar A_r))<\infty.$$
Thus, there exists a sequence $r_i\to 0^+$ such that
$$ \sup_{i\in \N}\left(\limsup_{j\to\infty}\Mod_{\Omega_j}  \Gamma( \pi_{\Omega_j}(\mathbb S(z_0,r_i/2)),\pi_{\Omega}(\mathbb S(z_0,r_i)); \pi_{\Omega_j}(\bar A_{r_i}))\right)<\infty,$$
which implies Lemma \ref{lemma:potential_sufficient} \ref{lemma:potential_sufficient:3}. Lemma \ref{lemma:potential_sufficient}  \ref{lemma:potential_sufficient:1} also holds by assumption \ref{prop:geometric_sufficient:1}. By Lemma \ref{lemma:potential_sufficient}, the condition of Lemma \ref{lemma:potential} holds for $q$. 
\end{proof}

\subsection{Geometric conditions for uniformization}\label{section:geometric}
In this section we use the the Euclidean metric.

\begin{lemma}\label{lemma:characteristics}
Let $\Omega\subset \widehat \C$ be a domain and $A=\mathbb A(z_0;t/2,t)$ be an annulus, where $z_0\in \widehat \C$ and $t>0$, satisfying the following conditions for some $L\geq 1$.
\begin{enumerate}[label=\normalfont(\arabic*)]
	\item\label{lemma:characteristics:1} The number of components $p\in \mathcal C(\Omega)$ that intersect both complementary components of the annulus $\mathbb A(z_0;t,2t)$ is bounded above by $L$. 
	\item\label{lemma:characteristics:2} There exists a set $\mathcal C\subset \mathcal C(\Omega)$ with $\mathcal C(\Omega)\setminus \mathcal C$ having at most $L$ non-degenerate elements such that for each $p\in \mathcal C$ there exists a ball $B_p\subset \C$ with
\begin{align*}
\diam B_p \leq L\diam p,\quad   p\subset LB_p,\quad \text{and}\quad \sum_{p\in \mathcal C}\chi_{B_p}\leq L.
\end{align*}
\end{enumerate}
Then 
$$\sum_{\substack{p\in \mathcal C(\Omega)\\ p\cap A\neq \emptyset}} w_{A} (p)^2 \leq C(L).$$
\end{lemma}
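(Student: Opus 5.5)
We first reduce to $z_0\in\C$: for $z_0=\infty$ the annulus is $\mathbb A(0;t^{-1},2t^{-1})$ and all the quantities involved (the widths, the ball conditions, the counting in \ref{lemma:characteristics:1}) transform in the same way. Since $A=\mathbb A(z_0;t/2,t)$, every $p$ meeting $A$ has $w_A(p)\le \log 2$. So it suffices to bound $\sum w_A(p)^2$ over the components $p$ that meet $A$.

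We split these components into three classes.

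First, there are at most $L$ non-degenerate elements of $\mathcal C(\Omega)\setminus\mathcal C$. Each contributes at most $(\log 2)^2$, and degenerate components contribute $0$.

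Second, there are the components $p\in\mathcal C$ meeting $A$ with $\diam p\ge t/8$. Each such $p$ meets $\D(z_0,t)$ and has diameter at least $t/8$. Hypothesis \ref{lemma:characteristics:1} is the annulus-counting condition of Lemma \ref{lemma:count} \ref{lemma:count:1} at the single scale $\mathbb A(z_0;t,2t)$, so I would instead count directly with a covering argument, as in the proof of Lemma \ref{lemma:count}. Alternatively, and more robustly, I would use the bounded overlap of the balls $B_p$ to count these components:
\begin{enumerate}[label=(\roman*)]
\item Since $p\subset LB_p$ and $\diam B_p\ge L^{-1}\diam p\ge t/(8L)$, each ball $B_p$ has radius comparable to $t$ (up to $L$). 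Some care is needed because $\diam B_p\le L\diam p$ bounds the ball only from above in terms of $p$. However, $p\subset LB_p$ gives $\diam p\le L\diam B_p$, so the radius of $B_p$ is at least $\diam p/(2L)$.
\item For components with $\diam p\le 4t$, say, the ball $B_p$ lies in $\D(z_0,Ct)$ and has area at least $c(L)t^2$. Together with $\sum\chi_{B_p}\le L$, this bounds their number by $C(L)$.
\item Components with $\diam p>4t$ that meet $A$ must intersect both complementary components of $\mathbb A(z_0;t,2t)$. There are at most $L$ of them by \ref{lemma:characteristics:1}.
\end{enumerate}
Each of these contributes at most $(\log2)^2$.

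Third, there are the small components $p\in\mathcal C$ meeting $A$ with $\diam p<t/8$. For these, $p$ lies in $\mathbb A(z_0;t/4,2t)$. The key estimate is
$$w_A(p)\le \log\frac{r_A(p)+\diam p}{r_A(p)}\le \frac{\diam p}{r_A(p)}\le \frac{2\diam p}{t}\cdot 2,$$
so $w_A(p)^2\lesssim (\diam p)^2/t^2$. Now $(\diam p)^2\le L^2(\diam (B_p))^2\cdot L^2\lesssim L^4\, m_2(B_p)$, using that the radius of $B_p$ is at least $\diam p/(2L)$. Moreover, $B_p$ lies within distance $L\cdot\mathrm{rad}(B_p)\lesssim L^2\diam p$ of $p$. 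A cleaner route uses $\diam B_p\le L\diam p< Lt/8$ and the fact that $LB_p$ meets $p$, so $B_p\subset\D(z_0,C(L)t)$. Hence
$$\sum_{\text{small }p} w_A(p)^2\lesssim \frac{L^4}{t^2}\sum m_2(B_p)\le \frac{L^5}{t^2}\, m_2(\D(z_0,C(L)t))\le C(L),$$
using bounded overlap.

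The main obstacle is the bookkeeping in the second class, namely getting a lower bound on $B_p$ from $p\subset LB_p$ and placing all relevant balls inside a fixed ball of radius comparable to $t$. Both steps follow from the two inclusions above, so I expect the estimate to close with $C(L)$ depending polynomially on $L$.
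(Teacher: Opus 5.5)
For $z_0\in\C$ your argument is correct and is essentially the paper's proof, which likewise uses $w_A(p)\le 2(R_A(p)-r_A(p))/t$, bounds $(R_A(p)-r_A(p))^2$ by a multiple of $L^2 m_2(B_p)$ with $B_p\subset \D(z_0,(1+4L^2)t)$, sums via bounded overlap, and handles the remaining components by hypothesis (1) and the $L$ exceptional elements. The paper splits according to whether $p\subset\D(z_0,2t)$; your separate counting of the class $t/8\le\diam p\le 4t$ is unnecessary, since your small-component estimate applies verbatim to every $p$ with $\diam p\le 4t$.

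Your reduction of $z_0=\infty$ repeats the paper's one-line reduction and is not literally valid, because hypothesis (1) does not transform. Indeed $\mathbb A(\infty;t,2t)=\mathbb A(0;(2t)^{-1},t^{-1})$, whereas the argument for $A=\mathbb A(0;t^{-1},2t^{-1})$ needs the count for $\mathbb A(0;2t^{-1},4t^{-1})$. This is harmless in the paper's applications, where (1) is available for every Euclidean annulus.
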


\begin{proof}
Consider the annulus $A=\mathbb A(z_0;t/2,t)$. If $z_0=\infty$, then $A= \mathbb A(\infty;t/2,t)=\mathbb A(0;t^{-1},2t^{-1})$, so it suffices to consider the case that $z_0\in \C$. 

Let $p\in \mathcal C$ such that $p\cap A\neq \emptyset$. By \ref{lemma:characteristics:2} we have $p\subset LB_p$. Note that there exists a ball in $A\cap (LB_p)$ of diameter equal to $R_A(LB_p)-r_A(LB_p)$. Hence,
\begin{align}\label{lemma:characteristics:ineq1}
(R_A(p)-r_A(p))^2 &\leq (R_A(LB_p)-r_A(LB_p))^2 \leq \frac{4}{\pi} m_2( A\cap (LB_p)).
\end{align}
If $p\subset \D(z_0,2t)$, then $\diam p<4t$, so by assumption \ref{lemma:characteristics:2} we have $\diam B_p\leq L\diam p<4Lt$ and $\diam(LB_p)< 4L^2t$. Since $\D(z_0,t)\cap p\neq \emptyset$, we have $\D(z_0,t)\cap (LB_p)\neq \emptyset$, which implies that
$$B_p\subset LB_p\subset \D(z_0,Mt),$$
where $M=1+4L^2$. Thus, 
\begin{align}\label{lemma:characteristics:ineq2}
m_2( A\cap (LB_p))\leq m_2(LB_p)= L^2 \int_{\D(z_0,Mt)} \chi_{B_p}\, dm_2.
\end{align} 
By combining \eqref{lemma:characteristics:ineq1} and \eqref{lemma:characteristics:ineq2}, we conclude that
\begin{align*}
\sum_{\substack{p\in \mathcal C\\ p\cap A\neq \emptyset, \,p\subset \D(z_0,2t)}} (R_A(p)-r_A(p))^2&\leq  \frac{4L^2}{\pi}\sum_{p\in \mathcal C}\int_{\D(z_0,Mt)} \chi_{B_p}\, dm_2 \\
&=  \frac{4L^2}{\pi}\int_{\D(z_0,Mt)} \sum_{p\in \mathcal C} \chi_{B_p}\, dm_2 \leq 4L^3M^2 t^2;
\end{align*}
the last inequality follows from assumption \ref{lemma:characteristics:2}, which implies that $\mathcal C$ is countable and $\sum_{p\in \mathcal C}\chi_{B_p}$ is measurable and bounded above by $L$. Note that
$$w_A(p)=\log \frac{R_A(p)}{r_A(p)}\leq \frac{R_A(p)-r_A(p)}{r_A(p)}\leq 2\frac{R_A(p)-r_A(p)}{t}.$$
We conclude that
\begin{align}\label{lemma:characteristics:ineq3}
\sum_{\substack{p\in \mathcal C\\ p\cap A\neq \emptyset,\,p\subset \D(z_0,2t)}} w_A(p)^2\leq 16L^3M^2.
\end{align}

Suppose that $p\cap A\neq\emptyset $ and $p\not\subset \D(z_0,2t)$. Such components intersect both complementary components of $\mathbb A(z_0;t,2t)$. By assumption \ref{lemma:characteristics:1}, there exist at most $L$ such components $p\in \mathcal C(\Omega)$. Also, we have  $w_A(p)\leq \log 2$ for each $p\in \mathcal C(\Omega)$. Thus,
\begin{align}\label{lemma:characteristics:ineq4}
\sum_{\substack{p\in \mathcal C(\Omega)\\ p\cap A\neq \emptyset,\, p\not\subset \D(z_0,2t)}} w_A(p)^2\leq  L(\log2)^2.
\end{align}
Finally, by assumption \ref{lemma:characteristics:2} the set $\mathcal C(\Omega)\setminus \mathcal C$ has at most $L$ non-degenerate elements.   Since $w_A(p)=0$ for degenerate components $p\in \mathcal C(\Omega)$, we have 
\begin{align}\label{lemma:characteristics:ineq5}
\sum_{p\in \mathcal C(\Omega)\setminus \mathcal C} w_A(p)^2\leq  L(\log2)^2.
\end{align}
By combining \eqref{lemma:characteristics:ineq3}, \eqref{lemma:characteristics:ineq4}, and \eqref{lemma:characteristics:ineq5}, we conclude that
$$\sum_{\substack{p\in \mathcal C(\Omega)\\ p\cap A\neq \emptyset}} w_{A} (p)^2 \leq 16L^3M^2+ 2L(\log2)^2.$$
This completes the proof. 
\end{proof}

We now restate Theorem \ref{theorem:intro} in an expanded form and we give the proof.
\begin{theorem}\label{theorem:john:restate}
Let $\Omega\subset \C$ be a John domain. Then there exists a conformal map $f$ from $\Omega$ onto a circle domain. Moreover for each $q\in \mathcal C(\Omega)$ the set $\hat f(q)$ is a singleton if and only if $q$ is a singleton. 
\end{theorem}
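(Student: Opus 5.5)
We follow the exterior approximation scheme of Section \ref{section:strategy}. For $j\in\N$ let $\Omega_j\supset\Omega$ be the domain whose complementary components are exactly the components $p\in\mathcal C(\Omega)$ (with respect to $\widehat\C$) of spherical diameter at least $1/j$. By Proposition \ref{prop:packing} and Lemma \ref{lemma:diameter}, there are finitely many such $p$. So $\Omega_j$ is finitely connected, and in particular countably connected, and Koebe's theorem yields a conformal map $f_j\colon\Omega_j\to D_j$ onto a circle domain. Circle domains are $\tau$-cofat for a universal $\tau>0$. Each compact $K\subset\Omega$ has positive distance from $\widehat\C\setminus\Omega$, and only finitely many components are relevant; one checks that $K\subset\Omega_j$ for large $j$. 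Indeed, the components removed have diameter $<1/j$, so any of them meeting $K$ would have to lie in $\Omega$, which is impossible. For $q\in\mathcal C(\Omega)$, the set $q_j\in\mathcal C(\Omega_j)$ containing $q$ exists since $\widehat\C\setminus\Omega_j\supset\widehat\C\setminus\Omega$. Hence condition \ref{theorem:general_caratheodory:1} of Theorem \ref{theorem:general_caratheodory} holds. Fix three points of $\Omega$ and normalize $f_j$ by Möbius maps so that they go to $0,1,\infty$. A normal family argument (the maps omit the images of a fixed complementary component, or one uses the three-point normalization with injectivity) then gives a subsequence converging locally uniformly to a conformal $f\colon\Omega\to D$. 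Condition \ref{theorem:general_caratheodory:3} follows.

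The key step is condition \ref{theorem:general_caratheodory:2}, which we obtain from Proposition \ref{prop:geometric_sufficient}. Its condition \ref{prop:geometric_sufficient:1} is Lemma \ref{lemma:diameter}, and \ref{prop:geometric_sufficient:2} is finite connectivity. For \ref{prop:geometric_sufficient:3} we apply Lemma \ref{lemma:characteristics} to $\Omega_j$ and the annulus $A_r=\mathbb A(z_0;r/2,r)$, with a constant independent of $j$ and $r$. Since $\mathcal C(\Omega_j)\subset\mathcal C(\Omega)$, the packing hypothesis \ref{lemma:characteristics:1} is inherited from Proposition \ref{prop:packing} and Lemma \ref{lemma:count}. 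For hypothesis \ref{lemma:characteristics:2}, take $\mathcal C$ to be the non-degenerate bounded components of $\Omega_j$ and use the balls $B_p$ from Proposition \ref{prop:shadows}. These satisfy $\diam B_p\le k\diam p$, $p\subset kB_p$, and bounded overlap, and the overlap bound survives passing to a subfamily. The exceptional set $\mathcal C(\Omega_j)\setminus\mathcal C$ contains at most one non-degenerate element, namely the component containing $\infty$.

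The main obstacle is the point $z_0=\infty$, together with the case where $\infty\in\partial q$. The spherical versus Euclidean conventions must be reconciled. Lemma \ref{lemma:characteristics} allows $z_0=\infty$, but the inversion changes the geometry. We expect to handle this by noting that the sum in Lemma \ref{lemma:characteristics} is really estimated in the coordinates centered at $z_0$. Alternatively, we would first translate so that $0\in\Omega$ and consider $1/z$, checking that the needed packing and shadow estimates are local and persist for annuli around $\infty$; these annuli correspond to $\mathbb A(0;1/t,2/t)$ and only large scales matter. Once Proposition \ref{prop:geometric_sufficient} applies to every $q$, Lemma \ref{lemma:potential} gives condition \ref{theorem:general_caratheodory:2}. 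Theorem \ref{theorem:general_caratheodory} then shows that $\hat f(q)$ is the complement of a component of $\widehat\C\setminus E$, where $E$ is a Hausdorff limit of the circles or points $\hat f_j(q_j)$. Such a limit is a closed disk or a point, so $\hat f(q)$ is a closed disk or a point. Thus $D$ is a circle domain. Finally, Theorem \ref{theorem:koebe_nondegenerate} gives that $q$ is a singleton if and only if $\hat f(q)$ is.
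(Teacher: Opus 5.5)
Your outline matches the paper's route: exterior approximation keeping the large components, Koebe's theorem, Lemma \ref{lemma:characteristics} fed by Propositions \ref{prop:packing} and \ref{prop:shadows}, then Proposition \ref{prop:geometric_sufficient}, Lemma \ref{lemma:potential}, and Theorems \ref{theorem:general_caratheodory} and \ref{theorem:koebe_nondegenerate}. However, there is a genuine gap in your verification of condition \ref{theorem:general_caratheodory:1} of Theorem \ref{theorem:general_caratheodory}.

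The inclusion you invoke is reversed. Since $\Omega_j\supset\Omega$, we have $\widehat\C\setminus\Omega_j\subset\widehat\C\setminus\Omega$, consistent with your own later remark $\mathcal C(\Omega_j)\subset\mathcal C(\Omega)$. So a component $q$ of spherical diameter less than $1/j$ lies in $\Omega_j$, and no $q_j\in\mathcal C(\Omega_j)$ contains it. For non-degenerate $q$ this is harmless: $q\in\mathcal C(\Omega_j)$ for all large $j$, so you discard finitely many indices and take $q_j=q$. For a point component $q$, however, $q\subset\Omega_j$ for every $j$, and Theorem \ref{theorem:general_caratheodory} cannot be applied to $q$ at all. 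Your argument then gives no information about $\hat f(q)$ for point components, of which there may be uncountably many. So you do not obtain that $D$ is a circle domain. You also lose the direction ``$q$ singleton $\Rightarrow$ $\hat f(q)$ singleton'', since Theorem \ref{theorem:koebe_nondegenerate} is stated in the setting of Theorem \ref{theorem:general_caratheodory}.

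The missing idea is the paper's: for a singleton $q$, run the argument with the punctured domains $\Omega_j\setminus q$ and the restricted maps $f_j$ onto $D_j\setminus f_j(q)$. These are still circle domains, hence uniformly cofat, and the restrictions still converge locally uniformly to $f$ on $\Omega$. Condition \ref{theorem:general_caratheodory:1} now holds with $q_j=q$. Moreover $\mathcal C(\Omega_j\setminus q)=\mathcal C(\Omega_j)\cup\{q\}\subset\mathcal C(\Omega)$ and $w_A(q)=0$, so your packing and shadow estimates go through unchanged. The approximating sequence depends on $q$, which is fine because Theorem \ref{theorem:general_caratheodory} is applied one component at a time.

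By contrast, the point $z_0=\infty$ that you flag as the main obstacle is not one. Lemma \ref{lemma:characteristics} already covers $z_0=\infty$ via $\mathbb A(\infty;t/2,t)=\mathbb A(0;t^{-1},2t^{-1})$. The packing and fat-shadow estimates for a John domain $\Omega\subset\C$ hold at every Euclidean center and scale, so nothing more is needed there. The passage from Euclidean annuli to the spherical circles of Lemma \ref{lemma:potential} is done by Lemma \ref{lemma:potential_sufficient}. Your justification that compact subsets of $\Omega$ lie in $\Omega_j$ is also muddled, but that claim is trivial since $\Omega\subset\Omega_j$.
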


\begin{proof}
By Lemma \ref{lemma:diameter}, for each $j\in \N$ at most finitely many components of $\mathcal C(\Omega)$ have spherical diameter greater than $j^{-1}$. Let $\Omega_j'$ be the finitely connected domain such that $\mathcal C(\Omega_j')$ consists of these components. By the Koebe uniformization theorem, for $j\in \N$ there exists a conformal map $f_j$ from $\Omega_j'$ onto a circle domain $D_j'$, which we normalize so that it fixes three points of $\Omega$. By a normality criterion \cite{LehtoVirtanen:quasiconformal}*{Theorem II.5.1}, and by passing to a subsequence, we may assume that $\{f_j\}_{j\in \N}$ converges locally uniformly in $\Omega$ to a conformal map from $\Omega$ onto a domain $D\subset \widehat \C$. Our goal is to show that $D$ is a circle domain. For this, it suffices to show that Theorem \ref{theorem:general_caratheodory} is applicable for each $q\in \mathcal C(\Omega)$. Indeed, then for any Hausdorff limit $E$ of $\{\hat f_j(q_j)\}_{j\in \N}$, which is necessarily a disk or a point, we have $\hat f(q)=E$, so $\hat f(q)$ is a disk or a point.

We first verify conditions \ref{theorem:general_caratheodory:1} and \ref{theorem:general_caratheodory:3} of Theorem \ref{theorem:general_caratheodory}. Suppose that $q\in \mathcal C(\Omega)$ is non-degenerate, so $q\in \mathcal C(\Omega_j')$ for all sufficiently large $j\in \N$. By ignoring finitely many indices, we may have that $q\in \mathcal C(\Omega_j')$ for all $j\in \N$. We set $\Omega_j=\Omega_j'$, $D_j=D_j'$, and $q_j=q$ for $j\in \N$, and we note that condition \ref{theorem:general_caratheodory:1} of Theorem \ref{theorem:general_caratheodory} holds. Now, suppose that $q\in \mathcal C(\Omega)$ is a singleton, so $q\in \Omega_j'$ for every $j\in \N$. For $j\in \N$ we set $\Omega_j=\Omega_j'\setminus q$ and $D_j=f_j(\Omega_j)=D_j'\setminus f_j(q)$. Thus, condition \ref{theorem:general_caratheodory:1} of Theorem \ref{theorem:general_caratheodory} holds for $\{\Omega_j\}_{j\in \N}$ with $q_j=q$ for each $j\in \N$. Moreover, in both cases the map $\{f_j\colon \Omega_j\to D_j\}_{j\in \N}$ converges locally uniformly in $\Omega$ to $f\colon \Omega\to D$, so condition \ref{theorem:general_caratheodory:3} of Theorem \ref{theorem:general_caratheodory} holds as well.

Proposition \ref{prop:packing} and Lemma \ref{lemma:count} imply that condition \ref{lemma:characteristics:1} of Lemma \ref{lemma:characteristics} holds for $\Omega$. Moreover, Proposition \ref{prop:shadows} shows that condition \ref{lemma:characteristics:2} of Lemma \ref{lemma:characteristics} holds; here $\mathcal C$ is taken to be the collection of non-degenerate components $p\in \mathcal C(\Omega)$ that do not contain $\infty$, so $\mathcal C(\Omega)\setminus \mathcal C$ has at most one non-degenerate element. Thus, the conclusion of Lemma \ref{lemma:characteristics} is true for any annulus $A=\mathbb A(z_0;r/2,r)$. In particular, given that $\mathcal C(\Omega_j)\subset \mathcal C(\Omega)$ for each $j\in \N$, we see that condition \ref{prop:geometric_sufficient:3} of Proposition \ref{prop:geometric_sufficient} holds. Condition \ref{prop:geometric_sufficient:1} of Proposition \ref{prop:geometric_sufficient} also holds, by Lemma \ref{lemma:diameter}. By construction, \ref{prop:geometric_sufficient:2} is true as well. By Proposition \ref{prop:geometric_sufficient}, the condition of Lemma \ref{lemma:potential} holds. Thus, condition \ref{theorem:general_caratheodory:2} of Theorem \ref{theorem:general_caratheodory} is true, as desired. We conclude that $D$ is a circle domain and by Theorem \ref{theorem:koebe_nondegenerate}, $\hat f(q)$ is a point if and only if $q$ is a point.
\end{proof}

\subsection{Necessary and sufficient condition}\label{section:exhaustion}
We restate and prove Theorem \ref{theorem:exhaustion}. Recall the definitions given in Section \ref{section:exhaustion_intro}.

\begin{theorem}
A domain $\Omega\subset \widehat \C$ satisfies Koebe's conjecture if and only if there exist an exhaustion $\{\Omega_j\}_{j\in \N}$ of $\Omega$ and a Jordan region $U\subset \widehat \C$ that contains $\widehat \C\setminus \Omega$ such that for every $q\in \mathcal C(\Omega)$ we have
\begin{align}\label{theorem:exhaustion:claim}
\lim_{\substack{z\to  q\\z\in \Omega}} \limsup_{j\to\infty} \Mod_{\Omega_j} \mathcal G^*(z,q_j(q); U)=0.
\end{align}
In this case, if $f_j$ is a conformal map from $\Omega_j$ onto a circle domain $D_j\subset \widehat \C$, $j\in \N$, such that $\{f_j\}_{j\in \N}$ converges locally uniformly in $\Omega$ to a conformal map $f$ on $\Omega$, then for each $q\in \mathcal C(\Omega)$ we have
$$\lim_{j\to\infty}\hat f_j(q_j(q))=\hat f(q).$$
\end{theorem}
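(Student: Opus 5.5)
We prove the two implications separately, together with the final convergence statement.

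\emph{Sufficiency.} Fix an exhaustion $\{\Omega_j\}_{j\in\N}$ and a Jordan region $U$ as in the statement. Each $\Omega_j$ is finitely connected, so by Koebe's theorem there are conformal maps $f_j\colon\Omega_j\to D_j$ onto circle domains. We normalize them to fix three points of $\Omega_1$, and pass to a subsequence so that $f_j\to f$ locally uniformly in $\Omega$, with $f$ conformal onto a domain $D$; this uses the normality criterion exactly as in the proof of Theorem \ref{theorem:john:restate}. Circle domains are $\tau$-cofat for a universal $\tau$, since disks and points are uniformly fat. Fix $q\in\mathcal C(\Omega)$ and put $q_j=q_j(q)$. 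Then:
\begin{itemize}
\item condition \ref{theorem:general_caratheodory:1} of Theorem \ref{theorem:general_caratheodory} holds because the $\Omega_j$ exhaust $\Omega$ and $q\subset q_j$;
\item condition \ref{theorem:general_caratheodory:3} holds by construction;
\item condition \ref{theorem:general_caratheodory:2} is exactly the hypothesis \eqref{theorem:exhaustion:claim}, once we note that $U\supset q$ and $\partial U\subset\Omega$.
\end{itemize}
Theorem \ref{theorem:general_caratheodory} then says that for every Hausdorff limit $E$ of $\hat f_j(q_j)$, the set $\hat f(q)$ is the complement of the component of $\widehat\C\setminus E$ containing $D$. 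Since $E$ is a limit of closed disks or points, it is a closed disk or a point, so $\hat f(q)=E$. Hence $D$ is a circle domain.

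\emph{Convergence statement.} Given any sequence $f_j$ as in the theorem, the argument above applies along every subsequence. Every subsequential Hausdorff limit of $\hat f_j(q_j(q))$ equals $\hat f(q)$, and the Hausdorff space of compact sets is compact, so the whole sequence converges to $\hat f(q)$.

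\emph{Necessity.} Suppose $g\colon\Omega\to D$ is conformal onto a circle domain. After a Möbius change we may take $\infty\in D$, well inside $D$. We invoke \cite{NtalampekosRajala:exhaustion}*{Theorem 1.1}, or rather its construction, to get an exhaustion $\{\Omega_j\}$ of $\Omega$. Its images $g(\Omega_j)$ have complementary components $g$-related to circles shrinking in the Hausdorff sense to $\hat g(q)$. By conformal invariance of transboundary modulus it suffices to verify \eqref{theorem:exhaustion:claim} for the exhaustion $D_j=g(\Omega_j)$ of $D$, with $U$ replaced by a Jordan region $U'\supset\widehat\C\setminus D$ whose boundary lies in $D$ near $\infty$; we then pull $U'$ back by $g$.

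For a circle domain, the condition reduces, via Lemma \ref{lemma:potential}, to the annulus estimate: modulus of curves joining $S(w_0,r)$ and $S(w_0,R)$ tends to zero. We get this from Lemma \ref{lemma:diameter_liminf} applied to annuli $\mathbb A(w_0;r_i/2,r_i)$ around $w_0\in\partial\hat g(q)$. The key input is that the widths $w_A(p)$ of complementary components of $D_j$ satisfy the packing and fat-shadow bounds of Lemma \ref{lemma:characteristics}, uniformly in $j$. Disks are their own shadows, and we need the exhaustion's components to remain comparable to disks. With a uniform bound on $\sum w_A(p)^2$, Proposition \ref{prop:geometric_sufficient} and Lemma \ref{lemma:potential} give condition \ref{theorem:general_caratheodory:2} for each $q$. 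One point needs care: the packing condition (1) of Lemma \ref{lemma:characteristics} fails for general circle domains. The remedy is to replace it with the observation that disks meeting both sides of $\mathbb A(w_0;t,2t)$ have disjoint interiors and hence contribute bounded widths.

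\emph{Main obstacle.} The hardest step is necessity. We must choose the exhaustion $D_j$ of the circle domain so that its complementary components, which are neighborhoods of unions of disks and points, still enjoy uniform fat-shadow and bounded-width control. Alternatively, we can verify Lemma \ref{lemma:potential_sufficient}\ref{lemma:potential_sufficient:3} directly with densities built from the circle domain $D$. For this I would first try taking $D_j$ to be the complement of closed $\varepsilon_j$-neighborhoods of finitely many large disks together with small disks covering the rest. I would then check whether Lemma \ref{lemma:characteristics} applies to these sets with a uniform constant.
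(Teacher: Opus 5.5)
Your sufficiency argument and your derivation of the convergence statement are correct, and they coincide with the paper's. Both use Koebe uniformization of the finitely connected $\Omega_j$, a normalization together with the normality criterion, cofatness of circle domains, and Theorem~\ref{theorem:general_caratheodory}. The key fact is that Hausdorff limits of disks are disks or points, so $\hat f(q)=E$. Passing to subsequences then gives convergence of the full sequence.

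\textbf{The necessity direction is not proved.} You never produce the exhaustion, and the property you label the ``main obstacle'' is exactly the missing idea. What is needed is an exhaustion $\{D_j\}_{j\in\N}$ of the circle domain $D$ (with $\infty\in D$) that is \emph{uniformly quasiround}. That is, there must be a constant $K_0$ independent of $j$ such that every $p\in\mathcal C(D_j)$ satisfies $B_p\subset p\subset K_0B_p$ for some ball $B_p$. The paper takes this from \cite{NtalampekosRajala:exhaustion}*{Theorem 2.1} with $K_0=43$; this is a different result from the convergence theorem you cite.

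With such an exhaustion, hypothesis \ref{lemma:characteristics:2} of Lemma~\ref{lemma:characteristics} holds for $D_j$ with $\mathcal C=\mathcal C(D_j)$. The balls $B_p$ are pairwise disjoint because the $p$ are, and $\diam B_p\le \diam p$. Hence $\limsup_{j}\sum_{p} w_{A_r}(p)^2$ over $A_r=\mathbb A(z_0;r/2,r)$ is bounded by a constant independent of $r$. Proposition~\ref{prop:geometric_sufficient} and Lemma~\ref{lemma:potential} then give the condition for $D$, and conformal invariance transfers it to $\Omega$.

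Your tentative construction does not deliver this property. Finitely many balls covering the small disks and point components will in general overlap one another. They will also overlap the $\varepsilon_j$-neighborhoods of the large disks, since point components may accumulate on the large circles. The merged components then need not be comparable to disks with a constant independent of $j$. Choosing the covers separately for each $j$ also does not make the complements nested, which $D_j\subset D_{j+1}$ requires. Without a uniform constant, Lemma~\ref{lemma:characteristics} gives no $j$-independent bound, and there is no reason for the condition to hold for an arbitrary exhaustion.

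Finally, your claim that hypothesis \ref{lemma:characteristics:1} of Lemma~\ref{lemma:characteristics} fails for circle domains is mistaken. Disjoint disks meeting both boundary circles of $\mathbb A(z_0;t,2t)$ each contain a disk of radius $t/2$ inside the annulus. Comparing areas, there are at most $12$ of them. Your ``remedy'' is precisely this proof, and the paper transfers the bound to $\mathcal C(D_j)$ for all large $j$.
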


\begin{proof}
Suppose that there exists an exhaustion of $\Omega$ as in the statement. By Koebe's uniformization theorem, there exists a sequence of conformal maps $f_j$ from $\Omega_j$ onto a circle domain $D_j$, $j\in \N$. By appropriate normalizations, we may assume that there exist three points of $\Omega$ that are fixed by $f_j$ for each $j\in \N$. Thus, by a normality criterion \cite{LehtoVirtanen:quasiconformal}*{Theorem II.5.1}, there exists a subsequence of $\{f_j\}_{j\in \N}$ that converges locally uniformly to a conformal map $f$ from $\Omega$ onto a domain $D$.  Note that for each $q\in \mathcal C(\Omega)$ the set $\hat f_j(q_j(q))$ is a disk for each $j\in \N$. Thus, all Hausdorff limits of $\{\hat f_j(q_j(q))\}_{j\in \N}$ are disks or points. Let $E$ be such a Hausdorff limit. By Theorem \ref{theorem:general_caratheodory}, the set $\hat f(q)$ is the complement of the component of $\widehat \C\setminus E$ that contains $D$. Hence, $\hat f(q)=E$, the set $\hat f(q)$ is a disk or a point, and
$$\lim_{j\to\infty}\hat f_j(q_j(q)) =\hat f(q).$$
Therefore, $D$ is a circle domain. Thus, Koebe's conjecture is true for $\Omega$. Note that the last part of the theorem follows from the above argument. 

Conversely, suppose that Koebe's conjecture is true for $\Omega$, so there exists a conformal map $f$ from $\Omega$ onto a circle domain $D$. Without loss of generality, suppose that $\infty\in \Omega$ and $f(\infty)=\infty$. By \cite{NtalampekosRajala:exhaustion}*{Theorem 2.1} there exists an exhaustion $\{D_j\}_{j\in \N}$ of $D$ that is $K_0$-\textit{quasiround} for $K_0=43$, meaning that for each $j\in \N$ and $q\in \mathcal C(D_j)$ there exists a ball $B_q$ in $\C$ such that
\begin{align}\label{theorem:exhaustion:quasiround}
B_q\subset q\subset K_0B_q.
\end{align}
Let $\{\Omega_j\}_{j\in \N}$ be the preimage of this exhaustion under $f$. We show \eqref{theorem:exhaustion:claim} for this exhaustion.  Consider a ball $B$ in $\C$ such that $B\supset \widehat \C\setminus D$ and let $U$ be the Jordan region that is bounded by $f^{-1}(\partial B)$ and does not contain $\infty$. By the conformal invariance of transboundary modulus, it suffices to prove that 
$$\lim_{\substack{w\to q\\w\in D}} \limsup_{j\to\infty} \Mod_{D_j} \mathcal G^*(w,q_j(q);B) =0$$
for every $q\in \mathcal C(D)$. That is, we wish to verify condition \ref{theorem:general_caratheodory:2} of Theorem \ref{theorem:general_caratheodory} for the domain $D$, the component $q\in \mathcal C(D)$, the sequence $\{D_j\}_{j\in \N}$, and the components $q_j=q_j(q)\in \mathcal C(D_j)$. It suffices to verify the condition of Lemma \ref{lemma:potential}. For this, it is enough to establish condition \ref{prop:geometric_sufficient:3} of Proposition \ref{prop:geometric_sufficient} for $A_r=\mathbb A(z_0;r/2,r)$, where $z_0\in \partial q$ and $r>0$. 

We verify the assumptions of Lemma \ref{lemma:characteristics} for the annulus $A_r$ and the domain $D_j$ when $j$ is large enough. If a ball $B$ intersects both complementary components of $A_{2r}$, then there exists a ball $B'\subset B\cap A_{2r}$ of radius $r/2$. Thus, $m_2(A_{2r}\cap B)\geq \pi r^2/4$. Hence, if a collection of $N$ disjoint balls intersect both complementary components of $A_{2r}$, then
$$N \frac{\pi r^2}{4} \leq m_2(A_{2r})= 3\pi r^2.$$
That is, at most $12$ elements of $\mathcal C(D)$ can intersect both boundary components of $A_{2r}$. Since $\{D_j\}_{j\in \N}$ is an exhaustion of $D$, we conclude that for all sufficiently large $j\in \N$ at most $12$ components $p\in \mathcal C(D_j)$ intersect both boundary components of $A_{2r}$. Thus, condition \ref{lemma:characteristics:1} of Lemma \ref{lemma:characteristics} holds for the domain $D_j$ for all sufficiently large $j\in \N$ with constant $L=12$. Also, condition \ref{lemma:characteristics:2} of Lemma \ref{lemma:characteristics} holds automatically with constant $K_0$ by \eqref{theorem:exhaustion:quasiround}. By Lemma \ref{lemma:characteristics}, we conclude that
$$\limsup_{j\to\infty}\sum_{\substack{p\in \mathcal C(D_j)\\ p\cap A_r\neq \emptyset}} w_{A_r} (p)^2 \leq C$$
for a universal constant $C>0$, independent of $z_0$ and $r$. Therefore,
$$\liminf_{r\to 0^+}\limsup_{j\to\infty}\sum_{\substack{p\in \mathcal C(D_j)\\ p\cap A_r\neq \emptyset}} w_{A_r} (p)^2 <\infty.$$
This verifies condition \ref{prop:geometric_sufficient:3} of Proposition \ref{prop:geometric_sufficient}, as desired.
\end{proof}

\bibliography{../../../biblio} 
\end{document}